\newtheorem{lemma}{Lemma}[section]
\newtheorem{theorem}{Theorem}[section]
\newtheorem{proposition}{Proposition}[section]
\numberwithin{equation}{section}
\newcommand{\dis}{\displaystyle}
\newcommand{\CE}{\mathcal{E}}
\newcommand{\CH}{\mathcal{H}}
\newcommand{\CL}{\mathcal{L}}
\newcommand{\CN}{\mathcal{N}}
\newcommand{\CD}{\mathcal{D}}
\newcommand{\CR}{\mathcal{R}}
\newcommand{\la}{\lambda}
\newcommand{\si}{\sigma}
\newcommand{\Ga}{\Gamma}
\begin{document}

\title[A half-space problem on the full Euler-Poisson system]{A half-space problem on the full Euler-Poisson system}

\author[R.-J. Duan]{Renjun Duan}
\address[RJD]{Department of Mathematics, The Chinese University of Hong Kong,
Shatin, Hong Kong}
\email{rjduan@math.cuhk.edu.hk}

\author[H.-Y. Yin]{Haiyan Yin}
\address[HYY]{School of Mathematical Sciences, Huaqiao University, Quanzhou 362021, P.R.~China.}
\email{yinhaiyan2000@aliyun.com}

\author[C.-J. Zhu]{Changjiang Zhu}
\address[CJZ]{School of Mathematics, South China University of Technology, Guangzhou, 510641, P.R.~China}
\email{machjzhu@scut.edu.cn}

\keywords{Stationary solution, asymptotic
stability, convergence rate, weighted energy method.\\
\mbox{\quad Corresponding author: Haiyan Yin}}


\date{\today}
\maketitle

\begin{abstract}
This paper is concerned with the initial-boundary value problem on the full Euler-Poisson system for ions over a half line. We establish the existence of stationary solutions under the Bohm criterion similar to the isentropic case and further obtain the large time asymptotic stability of small-amplitude stationary solutions provided that the initial perturbation is sufficiently small in some weighted Sobolev spaces. Moreover, the convergence rate of the solution toward the stationary solution is obtained. The proof is based on the energy method. A key point is to capture the positivity of the temporal energy dissipation functional and boundary terms with suitable space weight functions either algebraic or exponential depending on whether or not the incoming far-field velocity is critical.
\end{abstract}

%

\thispagestyle{empty}

\section{Introduction}
In plasmas confined in half space by a wall, the self-consistent potential may induce a sheath near the wall so as to realize the balance between the reflected electrons and the leaving ions, cf.~\cite{FFC,Riemann}. Mathematically, the plasma sheath is often described as the stationary solution or boundary layer solution in the half line to the Euler-Poisson system for the only heavier ions flow under the Boltzmann relation. The existence of the plasma sheath is guaranteed by the Bohm criterion saying that ions must move toward the wall at infinity with a velocity greater than a critical value given particularly as the acoustic velocity for cold ions. In fact, the Bohm criterion is also a condition on the large time asymptotic stability of the stationary solutions under consideration. The relevant mathematical studies has been done in \cite{HS,NOS,boundaawa,Suzuki,multicomponent}.

In the paper, we will take into account the additional effect of the variable temperature. For this purpose, the flow of positively charged ions in plasmas is governed by the full Euler-Poisson system of the form
\begin{eqnarray}\label{1.1}
&&\left\{\begin{aligned}
& n_t+(nu)_{x}=0,\\
& (m n u)_t+(mnu^{2}+p)_x
=n\phi_x,\\
&W_{t}+(Wu+pu)_{x}=nu\phi_x,\\
&\phi_{xx}=n-e^{-\phi}.
\end{aligned}\right.
\end{eqnarray}
The unknown functions $n$, $u$ and  $\phi$  stand for the density, velocity and the electrostatic potential, respectively. The positive constant $m$ is the mass of an ion. The function $W$ stands for the total
energy given by
\begin{eqnarray}\label{1.2}
W=\frac{1}{2}mnu^{2}+\frac{p}{\gamma-1},
\end{eqnarray}
where the constant $\gamma>1$ is the ratio of specific heats and the pressure $p$ satisfies the equation of state:
\begin{eqnarray}\label{1.3}
p=RTn,
\end{eqnarray}
with the temperature function $T$ and the Boltzmann constant $R>0$.
Note that $\phi$ has been chosen to have an opposite sign compared to the usual situation in physics. In the fourth equation of \eqref{1.1}, the electron density $n_{e}$ is determined by the  electrostatic potential in terms of the Boltzmann relation $n_{e}=e^{-\phi}$. We remark that the full Euler-Poisson system \eqref{1.1} can be formally derived through the macro-micro decomposition from the Vlasov-Poisson-Boltzmann system for the ions flow in kinetic theory, cf.~\cite{DL}.
 Substituting $\eqref{1.2}$ and $\eqref{1.3}$ into the equations
 $\eqref{1.1}$, we can obtain the following system of equations:
\begin{eqnarray}\label{1.4}
&&\left\{\begin{aligned}
& n_t+(nu)_{x}=0,\\
& m n (u_t+uu_{x})+(RTn)_x
=n\phi_x,\\
&T_{t}+uT_{x}+(\gamma-1)Tu_{x}=0,\\
&\phi_{xx}=n-e^{-\phi}.
\end{aligned}\right.
\end{eqnarray}

The goal of this paper is to study the long-time behavior of solutions to the initial boundary value problem on $\eqref{1.4}$ over the one-dimensional half space $\mathbb{R}_{+}:=\{x>0\}$.
Initial data for system $\eqref{1.4}$ are given by
\begin{eqnarray}\label{1.5}
(n,u,T)(0,x)=(n_{0},u_{0},T_{0})(x)\quad \text{with}\ \
\inf_{x\in \mathbb{R}_{+}}n_{0}(x)>0\ \ \text{and} \ \ \inf_{x\in \mathbb{R}_{+}}T_{0}(x)>0.
\end{eqnarray}
We assume that initial data at the far-field $x=\infty$ are
constant, namely,
\begin{eqnarray}\label{1.5a}
\lim_{x\rightarrow\infty}(n_{0},u_{0},T_{0})(x)=(n_{\infty},u_{\infty},T_{\infty}),\
\ \ n_{\infty}>0, \ \ \  T_{\infty}>0.
\end{eqnarray}
Since the fourth equation of $\eqref{1.4}$ is elliptic, the
extra boundary data on $\phi$ has to be supplemented as
\begin{eqnarray}\label{1.5b}
\phi(t,0)=\phi_{b}\neq 0,\ \ \ \ \ \ \lim_{x\rightarrow\infty}\phi(t,x)=0.
\end{eqnarray}
To the end, we always assume that
\begin{eqnarray}\label{1.6}
n_{\infty}=1,
\end{eqnarray}
so that the quasi-neutrality  holds true at $x=\infty$ by $\eqref{1.5b}$ and $\eqref{1.6}$. In particular, in this paper, we are only concerned with the existence and asymptotic stability of stationary solutions to the initial boundary value problem \eqref{1.4}, \eqref{1.5}, \eqref{1.5a}, \eqref{1.5b} and \eqref{1.6}. For this purpose, we denote $(\tilde n, \tilde
u, \tilde T, \tilde\phi)$ to be the solution of the stationary problem on the half space
\begin{eqnarray}\label{1.7}
&&\left\{\begin{aligned}
&(\tilde n\tilde u)_{x}=0,\\
& m \tilde n \tilde u \tilde u_x +(R \tilde T\tilde n)_{x}=\tilde n \tilde{\phi}_x,\\
&\tilde u \tilde T_{x}+(\gamma-1)\tilde T\tilde u_x=0,\\
 &\tilde{\phi}_{xx}=\tilde n-e^{-\tilde{\phi}}.
\end{aligned}\right.
\end{eqnarray}
Here, corresponding to $\eqref{1.5}$--$\eqref{1.6}$, we also require that \eqref{1.7} is supplemented with
\begin{equation}\label{1.8}
\left\{\begin{aligned}
  & \inf_{x\in \mathbb{R}_{+}}\tilde n(x)>0,\ \ \
\inf_{x\in \mathbb{R}_{+}}\tilde T(x)>0,\\
&\lim_{x\rightarrow\infty}(\tilde n,\tilde u,\tilde T, \tilde
\phi)(x)=(1,u_{\infty},T_{\infty},0),\\
&\tilde\phi(0)=\phi_{b}.
\end{aligned}\right.
\end{equation}
It is convenient to call $(\tilde n, \tilde
u, \tilde T, \tilde\phi)$ the boundary layer solution. In case $\phi_{b}=0$, if uniqueness is assumed, then one can only get the trivial solution $(\tilde n, \tilde
u, \tilde T, \tilde\phi)=(1,u_{\infty}, T_{\infty}, 0)$.
 Thus we consider the boundary layer solution under the assumption that $\phi_{b}\neq 0$.

Notice that the first and third equations of \eqref{1.7} together with the boundary data \eqref{1.8} give that
\begin{equation*}
\tilde{T}=T_\infty \left(\frac{\tilde{n}}{n_\infty}\right)^{\gamma-1}=T_\infty \tilde{n}^{\gamma-1}.
\end{equation*}
Plugging it into the second equation of \eqref{1.7}, it can be reduced to the isentropic case that has been extensively studied in \cite{Suzuki}. In fact, to consider the existence of stationary solutions, the Sagdeev potential
\begin{equation}\label{1.9}
\left\{\begin{aligned}
& V(\phi):=\int_{0}^{\phi}
[f^{-1}(\eta)-e^{-\eta}]d\eta,\\
&\qquad\qquad\text{with }f(n)=\frac{\gamma
RT_{\infty}}{\gamma-1}\left(n^{\gamma-1}-1\right)+\frac{mu^{2}_{\infty}}{2 }\left(\frac{1}{n^{2}}-1\right)
\end{aligned}\right.
\end{equation}
plays a crucial role. One can compute that
\begin{eqnarray*}
&&\begin{aligned} &  f'(n)=\frac{-mu_{\infty}^{2}+\gamma
RT_{\infty}n^{\gamma+1}}{n^{3}}.
\end{aligned}
\end{eqnarray*}
Then, the only critical point of $f$ occurs at
\begin{equation*}
n=c_\infty:=\left(\frac{m
u_{\infty}^{2}}{\gamma RT_{\infty}}\right)^{\frac{1}{\gamma+1}},
\end{equation*}
where the constant $c_\infty$ is determined by the far-field data in connection with the Mach number at $x=\infty$. Therefore, in terms of the critical point $c_\infty$, the inverse function $f^{-1}$ in \eqref{1.9} is understood by adopting the branch which
contains the far-field equilibrium state $(\tilde{n}, \tilde{\phi})=(1, 0)$. Since the unique existence
of the monotone stationary solution can be proved by a method similar to that in
 \cite{NOS} and \cite{Suzuki}, we omit the detailed discussions for brevity and list the main results in the following

\begin{proposition}\label{prop1.1}
Consider the boundary-value problem \eqref{1.7} and \eqref{1.8}.
\begin{itemize}

\item[(i)] Let $u_{\infty}$ be a constant satisfying
$$
\text{either}\ \
u^{2}_{\infty}\leq \frac{\gamma RT_{\infty}}{m}
\ \ \text{or}\ \
\frac{\gamma
RT_{\infty}+1}{m}\leq u^{2}_{\infty}.
$$
Then the stationary problem
$\eqref{1.7}$ and $\eqref{1.8}$ has a unique monotone solution
$(\tilde n, \tilde u, \tilde T, \tilde\phi)$ verifying
\begin{eqnarray*}
&&\begin{aligned} &\tilde n,\tilde u, \tilde T, \tilde\phi \in
C(\overline{\mathbb{R}}_{+}),\ \ \ \tilde n, \tilde u, \tilde T,
\tilde \phi, \tilde\phi_{x} \in C^{1}(\mathbb{R}_{+})
\end{aligned}
\end{eqnarray*}
if and only if the boundary data $\phi_{b}$ satisfies conditions
\begin{eqnarray*}
&&V(\phi_{b})\geq 0,\ \ \  \phi_{b}\geq f(c_\infty).
\end{eqnarray*}

\item[(ii)]Let $u_{\infty}$ be a constant satisfying
$$
\frac{\gamma
RT_{\infty}}{m}<u^{2}_{\infty}<\frac{\gamma RT_{\infty}+1}{m}.
$$
If
$\phi_{b}\neq 0$, then the stationary problem $\eqref{1.7}$ and
$\eqref{1.8}$ does not admit any solutions in the function space
$ C^{1}(\mathbb{R}_{+}). $ If $\phi_{b}= 0$, then a constant
state $(\tilde n, \tilde u, \tilde T,
\tilde\phi)=(1,u_{\infty},T_{\infty},0)$ is the unique solution.
\end{itemize}
Moreover, the existing stationary solution enjoy some additional space-decay properties in the following two cases:
\begin{itemize}

\item (Nondegenerate case) Assume that
$$
\frac{\gamma RT_{\infty}+1}{m}<
u^{2}_{\infty},\quad {u_\infty<0,}
$$
and
$
\phi_{b}\neq f(c_\infty)
$
hold true. The
stationary solution $(\tilde n, \tilde u, \tilde T, \tilde\phi)$  belongs to
$C^{\infty}(\overline{\mathbb{R}}_{+})$ and verifies
\begin{eqnarray}\label{1.14}
&&\begin{aligned} &|\partial_{x}^{i}(\tilde
n-1)|+|\partial_{x}^{i}(\tilde
u-u_{\infty})|+|\partial_{x}^{i}(\tilde
T-T_{\infty})|+|\partial_{x}^{i}\tilde \phi|\leq C |
\phi_{b}|e^{-cx},
\end{aligned}
\end{eqnarray}
for any $i\geq 0$,
where $c$ and $C$ are positive constants.

\item (Degenerate case) Assume that
$$
\frac{\gamma RT_{\infty}+1}{m}= u^{2}_{\infty},\quad {u_\infty<0,}
$$
and
$
\phi_{b}>0
$
hold true. Denote constants
\begin{equation*}
\left\{\begin{aligned}
&c_{0}=1,\\
&c_{1}=-2\Gamma,\\
&c_{2}=\frac{(\gamma^{2}+\gamma)RT_{\infty}+2}{2},\\
&c_{3}=-2\Gamma[(\gamma^{2}+\gamma)RT_{\infty}+2],
\end{aligned}\right.
\end{equation*}
with
\begin{equation}
\label{def.conGa}
\Gamma=\sqrt{\frac{(\gamma^{2}+\gamma)RT_{\infty}+2}{12}}.
\end{equation}
There are constants $\delta_0>0$ and $C>0$ such that for any   $\phi_{b}\in (0,\delta_{0})$,
\begin{equation*}
\sum_{i=0}^3\|\partial _{x}^{i}UG^{i+2}+c_{i}\|_{L^\infty}\leq C
\phi_{b}
\end{equation*}
with
\begin{equation*}
U=-\tilde{\phi}, \ \ \tilde{n}-1, \ \ \log \tilde{n}, \ \ \frac{\tilde{u}}{u_{\infty}}-1, \ \  \frac{1}{\gamma}\left(\frac{\tilde{T}}{T_{\infty}}-1\right),
\end{equation*}
where $G=G(x)$ is a function of the form
\begin{equation}
\label{def.Gx}
G(x)=\Gamma x+\phi_{b}^{-\frac{1}{2}}.
\end{equation}
\end{itemize}
\end{proposition}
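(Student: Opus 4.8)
The plan is to reduce Proposition \ref{prop1.1} to the isentropic analysis of \cite{Suzuki} via the identity $\tilde T=T_\infty\tilde n^{\gamma-1}$, and then to carry out a careful ODE analysis of the scalar equation for $\tilde\phi$ obtained after this reduction. Using the first equation $(\tilde n\tilde u)_x=0$ we set $\tilde n\tilde u=u_\infty$ (from the far-field data), substitute the temperature relation into the momentum equation, and integrate once to obtain an algebraic relation of the form $f(\tilde n)=\tilde\phi$, where $f$ is exactly the function in \eqref{1.9}; the branch of $f^{-1}$ is fixed by requiring $(\tilde n,\tilde\phi)=(1,0)$ at $x=\infty$, which makes sense precisely because $f'(n)$ changes sign only at $n=c_\infty$. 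Plugging $\tilde n=f^{-1}(\tilde\phi)$ into the Poisson equation yields the autonomous second-order ODE $\tilde\phi_{xx}=f^{-1}(\tilde\phi)-e^{-\tilde\phi}=V'(\tilde\phi)$, whose first integral is $\tfrac12\tilde\phi_x^2=V(\tilde\phi)$ after using $\tilde\phi,\tilde\phi_x\to0$ at infinity. The existence of a monotone solution connecting $\phi_b$ at $x=0$ to $0$ at $x=\infty$ is then equivalent to the phase-plane condition that $V(\eta)>0$ strictly between $0$ and $\phi_b$ with $V(0)=0$; the endpoint analysis at $\phi_b$ gives the remaining conditions.

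For part (i), I would analyze the sign of $V$ near $\eta=0$ by Taylor expansion: $V'(0)=0$ and $V''(0)=1/f'(1)-1$ where $f'(1)=(\gamma RT_\infty-mu_\infty^2)/1$ up to the constant from \eqref{1.9}; the two cases $u_\infty^2\le \gamma RT_\infty/m$ and $(\gamma RT_\infty+1)/m\le u_\infty^2$ correspond exactly to the sign of $V''(0)$ having the correct (nonnegative, resp. nonpositive in a way compatible with the chosen branch) behavior so that a trajectory can leave the equilibrium. The necessity and sufficiency of $V(\phi_b)\ge0$ and $\phi_b\ge f(c_\infty)$ then come from: (a) $\frac12\tilde\phi_x^2=V(\tilde\phi)\ge0$ forces $V(\phi_b)\ge0$; (b) $\phi_b$ must lie in the range of the chosen branch of $f$, i.e.\ on the side of the critical value $f(c_\infty)$ consistent with $n\to1$, giving $\phi_b\ge f(c_\infty)$; conversely these two conditions let one solve the ODE by separation of variables $x=\int_{\tilde\phi}^{\phi_b}d\eta/\sqrt{2V(\eta)}$ and check monotonicity and the regularity claimed. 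Part (ii) follows because in the intermediate regime $V''(0)$ has the wrong sign, so $\eta=0$ is a center rather than a saddle and no nonconstant trajectory decays to it; I would make this rigorous by showing $V(\eta)<0$ for small $\eta\ne0$, contradicting $\frac12\tilde\phi_x^2=V(\tilde\phi)$.

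The decay estimates are the more delicate part. In the nondegenerate case $u_\infty^2>(\gamma RT_\infty+1)/m$ and $\phi_b\ne f(c_\infty)$, one has $V''(0)<0$ strictly, so $0$ is a hyperbolic fixed point of $\tilde\phi_{xx}=V'(\tilde\phi)$; linearization gives $\tilde\phi\sim Ce^{-\sqrt{-V''(0)}\,x}$, and a standard bootstrap (differentiating the ODE and using the algebraic relation $\tilde n=f^{-1}(\tilde\phi)$, which is smooth near $\phi=0$ since $f'(1)\ne0$) upgrades this to \eqref{1.14} for all derivatives, with the $|\phi_b|$ prefactor coming from continuous dependence of the stable-manifold trajectory on the boundary value. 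The condition $\phi_b\ne f(c_\infty)$ ensures the trajectory stays away from the degenerate point $n=c_\infty$ where $f^{-1}$ loses smoothness. In the degenerate case $u_\infty^2=(\gamma RT_\infty+1)/m$ we have $V''(0)=0$, so the linearization is trivial and the decay is only algebraic: here I expect $V(\eta)\sim \tfrac13\Gamma^2\eta^3\cdot(\text{const})$ type behavior near $0$ (a cubic vanishing), and integrating $\tilde\phi_x=-\sqrt{2V(\tilde\phi)}\sim -c\,\tilde\phi^{3/2}$ gives $\tilde\phi\sim (\text{const})\,x^{-2}$, i.e.\ $\tilde\phi\approx -G(x)^{-2}$ with $G(x)=\Gamma x+\phi_b^{-1/2}$ chosen so that $\tilde\phi(0)=\phi_b$ matches to leading order. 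The main obstacle is precisely this degenerate expansion: one must compute the Taylor coefficients of $V$ at $0$ carefully through third order (using the expansion of $f^{-1}$ near $n=1$ with the relation $mu_\infty^2=\gamma RT_\infty+1$), identify the constant $\Gamma$ in \eqref{def.conGa}, and then prove the asymptotic ansatz $\partial_x^i U\approx -c_i G^{-(i+2)}$ rigorously — most naturally by a fixed-point/continuity argument for the rescaled unknown $G^2 U$, showing it is $O(\phi_b)$-close to the exact solution $1$ of the limiting profile ODE, uniformly on $\overline{\mathbb R}_+$ for $\phi_b\in(0,\delta_0)$. Controlling the error uniformly up to $x=0$, where $G(x)=\phi_b^{-1/2}$ is large but the profile is only $O(\phi_b)$ small, is the technical crux.
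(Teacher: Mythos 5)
Your strategy is exactly the one the paper intends: reduce to the isentropic case via $\tilde T=T_\infty\tilde n^{\gamma-1}$, pass to the scalar profile ODE $\tilde\phi_{xx}=V'(\tilde\phi)$ through the Bernoulli relation $f(\tilde n)=\tilde\phi$, and run the Sagdeev phase-plane argument of Suzuki and Nishibata--Ohnawa--Suzuki, with the cubic degeneracy of $V$ at $0$ driving the algebraic decay in the critical case. The paper itself omits the proof and cites \cite{Suzuki,NOS}, so there is no genuinely different route here.

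There are, however, sign errors that would derail the argument if carried through literally. From $V'(\phi)=f^{-1}(\phi)-e^{-\phi}$ one gets $V''(\phi)=(f^{-1})'(\phi)+e^{-\phi}$, hence $V''(0)=1/f'(1)+1$, not $1/f'(1)-1$. With $f'(1)=\gamma RT_\infty-mu_\infty^2$, \emph{both} regimes in part (i) give $V''(0)\ge 0$ (strictly positive in the nondegenerate case), which is exactly what is needed for a saddle at the origin and for the first integral $\tfrac12\tilde\phi_x^2=V(\tilde\phi)$ to be nonnegative; the excluded intermediate band in part (ii) is precisely where $V''(0)<0$ (a center). Your claim that the two subcases of (i) correspond to opposite signs of $V''(0)$ is therefore off; what actually distinguishes them is the sign of $f'(1)$, i.e.\ which monotone branch of $f^{-1}$ contains $n=1$ (equivalently whether $c_\infty<1$ or $c_\infty>1$), while both give a saddle. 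Consequently, in the nondegenerate case the decay rate is $e^{-\sqrt{V''(0)}\,x}$ with $V''(0)>0$. Similarly, in the degenerate case the leading profile is $\tilde\phi\approx +G(x)^{-2}$ (so that $\tilde\phi(0)\approx\phi_b>0$ and $U=-\tilde\phi$ satisfies $UG^2\approx -1=-c_0$ as stated in the proposition), not $-G^{-2}$. Once these signs are corrected, your sketch is a faithful reconstruction of the cited argument, including the correct identification $V'''(0)=(\gamma^2+\gamma)RT_\infty+2=12\Gamma^2$ that produces $\Gamma$ in \eqref{def.conGa} and the separation-of-variables computation $\tilde\phi_x\approx-2\Gamma\tilde\phi^{3/2}$ leading to $G(x)=\Gamma x+\phi_b^{-1/2}$.
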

As in \cite{FFC, Riemann}, we introduce the {\it Bohm criterion} that corresponds to the condition that
\begin{eqnarray}\label{1.15}
&&\begin{aligned} &u^{2}_{\infty}\geq\frac{\gamma
RT_{\infty}+1}{m},\quad u_{\infty}<0.
\end{aligned}
\end{eqnarray}
From Proposition \ref{prop1.1}, we see that under the Bohm criterion, there exists a unique monotone small-amplitude stationary solution provided that either
\begin{equation}
\label{1.15a}
|\phi_b|\ll 1,\quad u_\infty<-\sqrt{\frac{\gamma
RT_{\infty}+1}{m}}
\end{equation}
or
\begin{equation}
\label{1.15b}
0<\phi_b\ll 1,\quad u_\infty=-\sqrt{\frac{\gamma
RT_{\infty}+1}{m}}.
\end{equation}
In both cases we call the monotone stationary solution the plasma sheath. From now on, we denote $(\tilde n, \tilde
u, \tilde T, \tilde\phi)$ to be the sheath solution to the half-space boundary-value problem \eqref{1.7} and \eqref{1.8} under the Bohm Criterion \eqref{1.15} additionally satisfying \eqref{1.15a} or \eqref{1.15b}.

The main concern of this paper is to study the asymptotic stability of the stationary solution $(\tilde n, \tilde
u, \tilde T, \tilde\phi)$. For this, it is convenient to
employ unknown functions $v:=\log n$ and $\tilde{v}:=\log \tilde{n}$
as well as perturbations
 $$(\varphi,\psi,\zeta,\sigma)(t,x)=(v,u,T,\phi)(t,x)-(\tilde v,\tilde u,\tilde T,\tilde \phi)(x).$$
From $\eqref{1.4}$ and $\eqref{1.7}$, we have
\begin{eqnarray}\label{1.16}
\begin{aligned}[b] \left(\begin{array} {ccc}
1 & 0 & 0\\
0& m & 0\\
0& 0 & 1\\
\end{array} \right)
\left(\begin{array} {c}
\varphi \\
\psi\\
\zeta\\
\end{array} \right)_{t}
&+\left(\begin{array} {ccc}
u & 1 & 0\\
RT& mu & R\\
0& (\gamma-1)T & u\\
\end{array} \right)
\left(\begin{array} {c}
\varphi \\
\psi\\
\zeta\\
\end{array} \right)_{x}\\[2mm]
&=-\left(\begin{array} {ccc}
\psi & 0 & 0\\
R\zeta& m\psi & 0\\
0& (\gamma-1)\zeta & \psi\\
\end{array} \right)
\left(\begin{array} {c}
\tilde{v} \\
\tilde{u}\\
\tilde{T}\\
\end{array} \right)_{x}+\left(\begin{array} {c}
0 \\
\sigma_{x}\\
0\\
\end{array} \right),
\end{aligned}
\end{eqnarray}
and
\begin{eqnarray}\label{1.17}
\begin{aligned} &
\sigma_{xx}=e^{\varphi+\tilde{v}}-e^{\tilde{v}}-e^{-(\sigma+\tilde{\phi})}+e^{-\tilde{\phi}}.
\end{aligned}
\end{eqnarray}
The initial and  boundary data to $\eqref{1.16}$-$\eqref{1.17}$ are
derived from  $\eqref{1.5}$--$\eqref{1.6}$ and $\eqref{1.8}$ as
\begin{eqnarray}\label{1.18}(\varphi,\psi,\zeta)(0,x)=(\varphi_{0},\psi_{0},\zeta_{0})(x):=(\log
n_{0}-\log \tilde n,u_{0}-\tilde u,T_{0}-\tilde T),
\end{eqnarray}
\begin{eqnarray}\label{1.19}
\lim_{x\rightarrow\infty}(\varphi_{0},\psi_{0},\zeta_{0})(x)=(0,0,0),\
\ \ \sigma(t,0)=0.
\end{eqnarray}
Provided that the perturbations are sufficiently small, both of the
characteristics of hyperbolic system  $\eqref{1.16}$ are negative
owing to $\eqref{1.15}$, namely,
\begin{equation*}
\left\{\begin{aligned}
&\lambda_{1}=\frac{(m+1)u-\sqrt{(m-1)^{2}u^{2}+4\gamma RT}}{2}<0,\\
&\lambda_{2}=u<0,\\
&\lambda_{3}=\frac{(m+1)u+\sqrt{(m-1)^{2}u^{2}+4\gamma RT}}{2}<0.
\end{aligned}\right.
\end{equation*}
Hence, no boundary conditions for the hyperbolic system $\eqref{1.16}$
are necessary for the well-posedness of the initial boundary value
problem \eqref{1.16}, \eqref{1.17}, \eqref{1.18} and \eqref{1.19}.

Before stating the main results, we first give the definition of the
function space $\mathscr{X}_{i}^{j}$ as follows:
$$
\mathscr{X}_{i}^{j}([0,M]):=\cap_{k=0}^{i}C^{k}([0,M];H^{j+i-k}(\mathbb{R}_{+})),
$$
$$
\mathscr{X}_{i}([0,M]):=\mathscr{X}_{i}^{0}([0,M]),
$$
for $i,j=0,1,2$, where $M>0$ is a constant.

\begin{theorem}[Nondegenerate case]\label{1.2theorem}
Assume that the condition  $\eqref{1.15a}$ holds.

\medskip
(i) Assume that the initial data satisfy
$$
(e^{\frac{\lambda
x}{2}}\varphi_{0},e^{\frac{\lambda x}{2}}\psi_{0},e^{\frac{\lambda
x}{2}}\zeta_{0})\in (H^2(\mathbb{R}_{+}))^{3}
$$
for some positive constant $\lambda$.
Then there exists a positive constant $\delta$ such that if
$\beta\in(0,\lambda]$ and
$$
\beta+(|\phi_{b}|+\|(e^{\frac{\beta
x}{2}}\varphi_{0},e^{\frac{\beta x}{2}}\psi_{0},e^{\frac{\beta
x}{2}}\zeta_{0})\|_{H^{2}})/\beta\leq\delta
$$
are satisfied, the initial
boundary value problem $\eqref{1.16}$--$\eqref{1.19}$ has a unique
solution as
$$
(e^{\frac{\beta x}{2}}\varphi,e^{\frac{\beta
x}{2}}\psi,e^{\frac{\beta x}{2}}\zeta,e^{\frac{\beta
x}{2}}\sigma)\in (\mathscr{X}_{2}(\mathbb{R}_{+}))^{3} \times
\mathscr{X}_{2}^{2}(\mathbb{R}_{+}).
$$
Moreover, the solution
$(\varphi,\psi,\zeta,\sigma)$ verifies the decay estimate
\begin{eqnarray*}
\|(e^{\frac{\beta x}{2}}\varphi,e^{\frac{\beta
x}{2}}\psi,e^{\frac{\beta
x}{2}}\zeta)(t)\|_{H^{2}}^{2}+\|e^{\frac{\beta
x}{2}}\sigma(t)\|_{H^{4}}^{2} \leq C\|(e^{\frac{\lambda
x}{2}}\varphi_{0},e^{\frac{\lambda x}{2}}\psi_{0},e^{\frac{\lambda
x}{2}}\zeta_{0})\|_{H^{2}}^{2}e^{-\mu t},
\end{eqnarray*}
where $C$ and $\mu$ are positive constants independent of $t$.

\medskip
(ii) Assume  $\lambda\geq 2$ holds. For an arbitrary
$\varepsilon\in(0,\lambda]$, there exists a positive constant
$\delta$ such that if
$$
((1+\beta
x)^{\frac{\lambda}{2}}\varphi_{0},(1+\beta
x)^{\frac{\lambda}{2}}\psi_{0},(1+\beta
x)^{\frac{\lambda}{2}}\zeta_{0})\in(H^2(\mathbb{R}_{+}))^{3}
$$
for $\beta>0$ and
$$
\beta+(|\phi_{b}|+\|((1+\beta
x)^{\frac{\lambda}{2}}\varphi_{0},(1+\beta
x)^{\frac{\lambda}{2}}\psi_{0},(1+\beta
x)^{\frac{\lambda}{2}}\zeta_{0})\|_{H^{2}})/\beta\leq\delta
$$
are satisfied, the initial boundary value problem
$\eqref{1.16}$--$\eqref{1.19}$ has a unique solution as
$$
((1+\beta
x)^{\frac{\varepsilon}{2}}\varphi,(1+\beta
x)^{\frac{\varepsilon}{2}}\psi,(1+\beta
x)^{\frac{\varepsilon}{2}}\zeta,(1+\beta
x)^{\frac{\varepsilon}{2}}\sigma)\in
(\mathscr{X}_{2}(\mathbb{R}_{+}))^{3} \times
\mathscr{X}_{2}^{2}(\mathbb{R}_{+}).
$$
Moreover, the solution
$(\varphi,\psi,\zeta,\sigma)$ verifies the decay estimate
\begin{equation*}
\begin{aligned}
&\|((1+\beta x)^{\frac{\varepsilon}{2}}\varphi,(1+\beta
x)^{\frac{\varepsilon}{2}}\psi,(1+\beta
x)^{\frac{\varepsilon}{2}}\zeta)(t)\|_{H^{2}}^{2}+\|(1+\beta
x)^{\frac{\varepsilon}{2}}\sigma(t)\|_{H^{4}}^{2}\\
& \leq C\|((1+\beta x)^{\frac{\lambda}{2}}\varphi_{0},(1+\beta
x)^{\frac{\lambda}{2}}\psi_{0},(1+\beta
x)^{\frac{\lambda}{2}}\zeta_{0})\|_{H^{2}}^{2}(1+\beta
t)^{-\lambda+\varepsilon},
\end{aligned}
\end{equation*}
where $C$ is a  positive
constant independent of $t$.
\end{theorem}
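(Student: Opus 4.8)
The plan is to treat the initial–boundary value problem $\eqref{1.16}$–$\eqref{1.19}$ by a weighted energy method, using the symmetric hyperbolic structure of $\eqref{1.16}$ for $U:=(\varphi,\psi,\zeta)^{\mathrm{T}}$ and the elliptic equation $\eqref{1.17}$ for $\sigma$, with space weight $w=w(x)$ equal to $e^{\beta x}$ in part (i) and $(1+\beta x)^{\lambda}$ in part (ii), so that the norms in the statement are the $H^{2}$- resp.\ $H^{4}$-norms of $w^{1/2}U$ and $w^{1/2}\sigma$. First I would run a standard local existence step in the regularity class $(\mathscr{X}_{2})^{3}\times\mathscr{X}_{2}^{2}$: given $U$, the problem $\eqref{1.17}$ with $\sigma(t,0)=0$ and $\sigma(t,\infty)=0$ is uniquely solvable because the coefficient $e^{-(\sigma+\tilde\phi)}$ of $\sigma$ in the linearization stays bounded below by a positive constant, which also yields the weighted elliptic gain $\|w^{1/2}\sigma\|_{H^{k+2}}\lesssim\|w^{1/2}\varphi\|_{H^{k}}+(\text{l.o.t.})$; the hyperbolic block needs no boundary data since $\lambda_{1},\lambda_{2},\lambda_{3}<0$; the iteration closes with the usual weighted short-time bounds, the weight entering only through commutators producing lower-order terms with an extra power of $\beta$. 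Global existence and the decay rates then follow from the a priori estimate below by the continuation argument, the initial weighted energy being small once $\delta$ is small.

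For the a priori estimate, suppose a solution on $[0,T]$ with $N(T):=\sup_{[0,T]}(\|w^{1/2}U\|_{H^{2}}+\|w^{1/2}\sigma\|_{H^{4}})\le\varepsilon_{0}$. Multiplying $\eqref{1.16}$ by $w$ times the entropy symmetrizer $S=\mathrm{diag}(RT/m,\,1/m,\,R/(m(\gamma-1)T))$, which renders $SA_{0}$ symmetric positive definite and $SA_{1}$ symmetric and, by the negativity of the characteristic speeds, negative definite, and pairing with $U$, one gets, with $\mathcal{E}_{0}(t):=\tfrac12\int_{0}^{\infty}w\,(SA_{0}U)\cdot U\,dx$,
$$\frac{d}{dt}\,\mathcal{E}_{0}(t)\;+\;\Big(-\tfrac12\int_{0}^{\infty} w_{x}\,(SA_{1}U)\cdot U\,dx\Big)\;+\;\Big(-\tfrac{w(0)}{2}(SA_{1}U)(t,0)\cdot U(t,0)\Big)\;=\;\mathrm{RHS}.$$
The second term is a genuine dissipation, $\gtrsim\beta\|w^{1/2}U\|^{2}$ in part (i) (as $w_{x}=\beta w$ and $-SA_{1}$ is positive definite) and $\gtrsim\beta\|(1+\beta x)^{(\lambda-1)/2}U\|^{2}$ in part (ii); the boundary term is nonnegative for the same reason, and this positivity at $x=0$ is exactly the use of the Bohm criterion $\eqref{1.15}$; and $\mathrm{RHS}$ collects the $\partial_{t}S$-terms, the background-coupling terms with coefficients $O(|\phi_{b}|e^{-cx})$ coming from $(\tilde v,\tilde u,\tilde T)_{x}$ through $\eqref{1.14}$, the nonlinear terms, and the potential term $\propto\int w\,\psi\,\sigma_{x}$. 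The potential term is controlled by incorporating the electric field into a modified functional: since the Lorentz force is a perfect $x$-derivative, $n\phi_{x}=\partial_{x}(\tfrac12\phi_{x}^{2}-e^{-\phi})$, one adds $\tfrac{c_{1}}{2}\int w|\sigma_{x}|^{2}$ (plus a small cross term) to the energy and uses $\eqref{1.17}$, the mass equation, and $\sigma(t,0)=\sigma_{t}(t,0)=0$, so that the leading part of $\int w\psi\sigma_{x}$ cancels and the remainder is $O(|\phi_{b}|)$ or higher order. Differentiating $\eqref{1.16}$–$\eqref{1.17}$ up to twice in $x$ (and once in $t$ via the equations, to realize the $\mathscr{X}_{2}$/$\mathscr{X}_{2}^{2}$ regularity) and repeating with the same multipliers yields
$$\frac{d}{dt}\mathcal{E}(t)+\mathcal{D}(t)\le 0,\qquad \mathcal{E}(t)\sim\|w^{1/2}U(t)\|_{H^{2}}^{2}+\|w^{1/2}\sigma(t)\|_{H^{4}}^{2},$$
with $\mathcal{D}(t)\gtrsim\beta\,\mathcal{E}(t)$ in part (i) and $\mathcal{D}(t)\gtrsim\beta\big(\|(1+\beta x)^{(\lambda-1)/2}U\|_{H^{2}}^{2}+\cdots\big)$ in part (ii); here all remainders on the right are seen to be bounded by $(\beta+\varepsilon_{0}/\beta+|\phi_{b}|/\beta)\,\mathcal{D}(t)$ — note that the borderline nonlinear terms lose a factor $\beta^{-1}$ against the dissipation — and therefore absorb exactly when $\beta+(|\phi_{b}|+\|w^{1/2}(\varphi_{0},\psi_{0},\zeta_{0})\|_{H^{2}})/\beta\le\delta$, which is the hypothesis.

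In part (i) this integrates at once to $\mathcal{E}(t)\le\mathcal{E}(0)e^{-\mu t}$ with $\mu\sim\beta$, and since $\beta\le\lambda$ gives $\mathcal{E}(0)\le C\|(e^{\lambda x/2}\varphi_{0},e^{\lambda x/2}\psi_{0},e^{\lambda x/2}\zeta_{0})\|_{H^{2}}^{2}$, the stated estimate follows. In part (ii) the weight derivative only furnishes a dissipation one algebraic power below the energy, so I would run the time-weighted energy method: the estimate at the full power $\lambda$ first gives boundedness of $\|(1+\beta x)^{\lambda/2}U(t)\|_{H^{2}}$; then, for $0\le k<\lambda$, estimating $(1+\beta t)^{k}$ times the energy at spatial power $\lambda-k$ and using the interpolation of $\int(1+\beta x)^{\lambda-k-1}|U|^{2}$ between $\int(1+\beta x)^{\lambda-k}|U|^{2}$ and the lower-power norm, one iterates in $k$; finally $\|(1+\beta x)^{\varepsilon/2}U\|^{2}\le\|(1+\beta x)^{\lambda/2}U\|^{2\varepsilon/\lambda}\|U\|^{2(1-\varepsilon/\lambda)}$ converts this into the rate $(1+\beta t)^{-\lambda+\varepsilon}$, with $\lambda\ge2$ being what keeps the nonlinear terms in the time-weighted iteration under control. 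The main obstacle throughout is the construction, at each order, of a combined energy–dissipation functional that is positive and whose dissipation dominates: the hyperbolic block $\eqref{1.16}$ has no dissipation of its own, so everything hinges on (a) extracting $\beta\|w^{1/2}U\|^{2}$-type dissipation purely from the space weight, which works only because the Bohm criterion makes all characteristic speeds negative — so that $-w_{x}SA_{1}$ and the $x=0$ boundary quadratic form are positive — and (b) coupling this cleanly with the Poisson equation so that the potential term, the boundary terms, and the $O(|\phi_{b}|)$ background errors are all absorbed; getting the signs and the relative sizes of the several multipliers right, uniformly as $\beta\to0$, is the delicate point, while the remaining nonlinear and commutator bookkeeping is routine.
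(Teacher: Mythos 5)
Your proposal captures the high-level strategy that the paper actually follows: weighted energy estimates for the symmetrized hyperbolic block $\eqref{2.0}$, dissipation produced purely by the spatial weight and by the boundary flux because the Bohm criterion makes all characteristics negative, and a time-weighted $(1+\beta t)^{\xi}$ iteration (in the style of Kawashima--Matsumura and Nishikawa) to convert the $\beta^{-1}$-lossy dissipation into the algebraic rate in part (ii). Where you diverge from the paper, and where your write-up is too vague to verify, is the treatment of the Poisson coupling, which is the one genuinely delicate point.

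The paper does \emph{not} add $\frac{c_1}{2}\int w|\sigma_x|^2$ to the energy. Instead it combines the zeroth-order energy identity $\eqref{2.1}$, multiplied by $e^{-\tilde\phi}$, with the first-order $x$-derivative identity $\eqref{2.2}$. The point is the algebraic cancellation in $\eqref{2.3p1}$--$\eqref{2.4}$: the problematic pair $e^{-\tilde\phi}\tilde n\psi_x\sigma-\tilde n\psi_x\sigma_{xx}=\tilde n\psi_x\bigl(e^{-\tilde\phi}\sigma-\sigma_{xx}\bigr)$ is rewritten using the Poisson equation $\eqref{2.3}$, which gives $e^{-\tilde\phi}\sigma-\sigma_{xx}\approx -\tilde n\varphi$, and then the mass equation turns $-\tilde n^2\psi_x\varphi$ into $(\tfrac12\tilde n^2\varphi^2)_t+(\tfrac12\tilde n^2u\varphi^2)_x+\cdots$. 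Thus the extra energy contribution is $\frac12\tilde n^2\varphi^2$, not $|\sigma_x|^2$, and it arises from a specific vertical coupling of two orders of $x$-derivatives, not from a modified Hamiltonian built on the force being a perfect derivative. Your identity $n\phi_x=\partial_x(\tfrac12\phi_x^2-e^{-\phi})$ is for the un-perturbed force; in the perturbation system $\eqref{1.16}$ the source is just $\sigma_x$ and does not inherit that clean structure, so the mechanism you sketch ("adds $\tfrac{c_1}{2}\int w|\sigma_x|^2$ plus a small cross term so the leading part of $\int w\psi\sigma_x$ cancels") would need to be made precise and checked; as written it is a gap.

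A second piece you omit is that even after the cancellation above, the flux $\CH_0$ still carries a $-\tilde n\sigma\psi$ term, so the weight-derivative integral $I_1$ produces, after Cauchy--Schwarz $\sigma\psi\ge-\tfrac{|u_\infty|}{2}\sigma^2-\tfrac{1}{2|u_\infty|}\psi^2$, a \emph{bad} $-\sigma^2$ contribution. The paper neutralizes it by multiplying the elliptic equation $\eqref{1.17}$ by $-|u_\infty|\varepsilon\beta W_{\varepsilon-1,\beta}\sigma$ and integrating (see $\eqref{3.1iup1}$--$\eqref{3.1gfiuj}$), which both recovers a $\sigma_x^2$-dissipation and trades $\sigma^2$ for $\varphi^2$ at acceptable cost; it is precisely here that the strict inequality $mu_\infty^2>\gamma RT_\infty+1$ of the nondegenerate Bohm criterion enters, through the positivity of the coefficient $\tfrac{m|u_\infty|^2-1}{2|u_\infty|}$ of $\psi^2$ in $\eqref{3.13de}$, not merely through sign considerations at $x=0$ as your write-up suggests. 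If you pursue your variant, you will need an analogous step, since $\int w|\sigma_x|^2$ in the \emph{energy} does not by itself give you the $\sigma_x^2$ \emph{dissipation} that is required to absorb the $-\sigma^2$ quadratic-form defect.
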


\begin{theorem}[Degenerate case]\label{1.3 theorem}
Assume that the condition  $\eqref{1.15b}$ holds. Let
$4<\lambda_{0}<5.5693\cdots$ be the unique real solution to the
equation
\begin{equation}
\label{def.la0}
\lambda_{0}(\lambda_{0}-1)(\lambda_{0}-2)-12\left(\frac{2}{\gamma+1}\lambda_{0}+2\right)=0,
\end{equation}
where $5.5693\cdots $ is the unique real solution to the equation
\begin{equation}
\label{def.5.5}
\lambda_{0}(\lambda_{0}-1)(\lambda_{0}-2)-12(\lambda_{0}+2)=0.
\end{equation}
Assume that
$\lambda\in[4,\lambda_{0})$ is satisfied. For arbitrary
$\varepsilon\in(0,\lambda]$ and $\theta\in(0,1],$ there exists a
positive constant $\delta$ such that if $\phi_{b}\in(0,\delta]$,
$\beta/(\Gamma \phi_{b}^{\frac{1}{2}} )\in [\theta,1]$,
$$
((1+\beta
x)^{\frac{\lambda}{2}}\varphi_{0},(1+\beta
x)^{\frac{\lambda}{2}}\psi_{0},(1+\beta
x)^{\frac{\lambda}{2}}\zeta_{0})\in(H^2(\mathbb{R}_{+}))^{3}
$$
and
$$
\|((1+\beta x)^{\frac{\lambda}{2}}\varphi_{0},(1+\beta
x)^{\frac{\lambda}{2}}\psi_{0},(1+\beta
x)^{\frac{\lambda}{2}}\zeta_{0})\|_{H^{2}}/\beta^{3}\leq\delta
$$
are satisfied, the initial boundary value problem
$\eqref{1.16}$--$\eqref{1.19}$ has a unique solution as
$$
((1+\beta
x)^{\frac{\varepsilon}{2}}\varphi,(1+\beta
x)^{\frac{\varepsilon}{2}}\psi,(1+\beta
x)^{\frac{\varepsilon}{2}}\zeta,(1+\beta
x)^{\frac{\varepsilon}{2}}\sigma)\in
(\mathscr{X}_{2}(\mathbb{R}_{+}))^{3} \times
\mathscr{X}_{2}^{2}(\mathbb{R}_{+}).
$$
Moreover, the solution
$(\varphi,\psi,\zeta,\sigma)$ verifies the decay estimate
\begin{equation*}
\begin{aligned}
&\|((1+\beta x)^{\frac{\varepsilon}{2}}\varphi,(1+\beta
x)^{\frac{\varepsilon}{2}}\psi,(1+\beta
x)^{\frac{\varepsilon}{2}}\zeta)(t)\|_{H^{2}}^{2}+\|(1+\beta
x)^{\frac{\varepsilon}{2}}\sigma(t)\|_{H^{4}}^{2}\\
& \leq C\|((1+\beta x)^{\frac{\lambda}{2}}\varphi_{0},(1+\beta
x)^{\frac{\lambda}{2}}\psi_{0},(1+\beta
x)^{\frac{\lambda}{2}}\zeta_{0})\|_{H^{2}}^{2}(1+\beta
t)^{-(\lambda-\varepsilon)/3},
\end{aligned}
\end{equation*}
where $C$ is a  positive
constant independent of $t$.
\end {theorem}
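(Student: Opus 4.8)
\medskip
\noindent\textbf{Proof strategy for Theorem \ref{1.3 theorem}.}
The plan is the standard three-part scheme for boundary-layer stability, parallel to the nondegenerate case of Theorem \ref{1.2theorem}: local-in-time solvability in the weighted Sobolev space, a uniform weighted a priori estimate that simultaneously encodes the algebraic time-decay, and a continuation argument. The new ingredient is that the sheath $(\tilde{n},\tilde{u},\tilde{T},\tilde{\phi})$ now decays only algebraically, at the exact rate of the degenerate part of Proposition \ref{prop1.1}, so the weighted dissipation has to be analysed with the leading cubic profile kept. For the local step I would first put the hyperbolic block \eqref{1.16} into Friedrichs-symmetric form by left-multiplying its rows by the solution-dependent diagonal symmetrizer $\mathrm{diag}(RT,\,1,\,R/((\gamma-1)T))$: the time-coefficient matrix $\mathrm{diag}(RT,\,m,\,R/((\gamma-1)T))$ is then symmetric positive definite ($T>0$), the $x$-coefficient matrix $A_1$ becomes symmetric, and under the Bohm criterion \eqref{1.15} $A_1$ is, for small perturbations, uniformly negative definite since all three characteristic speeds $\lambda_1,\lambda_2,\lambda_3$ are strictly negative. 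Together with \eqref{1.17} under $\sigma(t,0)=0$---for which the weighted elliptic estimate $\|(1+\beta x)^{\varepsilon/2}\sigma\|_{H^{j+2}}\lesssim\|(1+\beta x)^{\varepsilon/2}\varphi\|_{H^{j}}+\text{l.o.t.}$ holds uniformly in the small parameters because $e^{-\tilde{\phi}}\ge c>0$---a contraction scheme yields a unique local solution in $(\mathscr{X}_2(\mathbb{R}_{+}))^3\times\mathscr{X}_2^2(\mathbb{R}_{+})$ with no boundary data on $(\varphi,\psi,\zeta)$. It then remains to propagate a uniform bound on $N(t)^2:=\sup_{0\le\tau\le t}\big(\|((1+\beta x)^{\lambda/2}\varphi,(1+\beta x)^{\lambda/2}\psi,(1+\beta x)^{\lambda/2}\zeta)(\tau)\|_{H^2}^2+\|(1+\beta x)^{\lambda/2}\sigma(\tau)\|_{H^4}^2\big)$ and to extract the decay rate.

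For the a priori estimate I would fix the space weight $w(x)=(1+\beta x)^{\lambda}$ and a time weight $(1+\beta t)^{k}$, $k\ge0$ to be chosen, and run the weighted energy method on the symmetrized system and on its once- and twice-$x$-differentiated versions, together with the weighted elliptic identity for $\sigma$ from \eqref{1.17}. Three favourable structures then appear: the boundary contribution at $x=0$, coming from the flux $\tfrac12\langle A_1(\varphi,\psi,\zeta),(\varphi,\psi,\zeta)\rangle$ with $A_1$ negative definite, is $\gtrsim(1+\beta t)^{k}|(\varphi,\psi,\zeta)(t,0)|^{2}\ge0$; the weight-derivative terms $-\tfrac12\int(1+\beta t)^{k}w'(x)\langle A_1(\varphi,\psi,\zeta),(\varphi,\psi,\zeta)\rangle\,dx$ are $\gtrsim\beta\int(1+\beta t)^{k}(1+\beta x)^{\lambda-1}|(\varphi,\psi,\zeta)|^{2}\,dx$; and the Poisson coupling $\int(1+\beta t)^{k}w\,\psi\,\sigma_x\,dx$, after integration by parts, substitution of $\psi_x$ from the continuity equation and use of the weighted elliptic estimate, reduces---modulo a time derivative of an energy equivalent to $N(t)^2$ and absorbable remainders---to an $O(1)$ dissipation $\int(1+\beta t)^{k}w(\sigma_x^2+\sigma^2)\,dx$. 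What is left are: linear terms carrying a derivative of the sheath, $\tilde{v}_x,\tilde{u}_x,\tilde{T}_x,\tilde{\phi}_x,\tilde{\phi}_{xx},\dots$ (plus the commutators from the $x$-differentiations), and genuinely nonlinear cubic terms.

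The heart of the matter is the sheath-linear part. Since $\beta/(\Gamma\phi_{b}^{1/2})\in[\theta,1]$ one has $G(x)=\Gamma x+\phi_{b}^{-1/2}\sim\phi_{b}^{-1/2}(1+\beta x)$, so the degenerate part of Proposition \ref{prop1.1} gives, for $i\ge0$, $|\partial_x^{i}(\tilde{n}-1)|+|\partial_x^{i}(\tilde{u}-u_{\infty})|+|\partial_x^{i}(\tilde{T}-T_{\infty})|+|\partial_x^{i+1}\tilde{\phi}|\lesssim G^{-(i+2)}\sim\phi_{b}^{(i+2)/2}(1+\beta x)^{-(i+2)}$, with the exact leading coefficients $c_0=1$, $c_1=-2\Gamma$, $c_2=6\Gamma^2$, $c_3=-24\Gamma^3$ and $12\Gamma^2=(\gamma^2+\gamma)RT_{\infty}+2$. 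Retaining these leading profiles and using $\tilde{u}=u_{\infty}/\tilde{n}$, $\tilde{T}=T_{\infty}\tilde{n}^{\gamma-1}$ and the linearized Poisson relation, the sheath-linear terms combine with the weight-derivative terms into a quadratic form in $(\varphi,\psi,\zeta,\sigma,\sigma_x)$ with coefficients polynomial in $\lambda$; the requirement that a net positive weighted dissipation remain is exactly the cubic inequality $\lambda(\lambda-1)(\lambda-2)<12\big(\tfrac{2}{\gamma+1}\lambda+2\big)$, i.e. $\lambda<\lambda_{0}$ with $\lambda_{0}$ as in \eqref{def.la0}. The nonlinear cubic terms are bounded by $N(t)$ times (weighted energy $+$ weighted dissipation), and their absorption against the $\beta$-small, weight-degraded dissipation is what forces precisely the smallness $\|((1+\beta x)^{\lambda/2}\varphi_{0},(1+\beta x)^{\lambda/2}\psi_{0},(1+\beta x)^{\lambda/2}\zeta_{0})\|_{H^{2}}/\beta^{3}\le\delta$ (with $\phi_{b}\in(0,\delta]$). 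This yields, for $0\le k<(\lambda-\varepsilon)/3$, a closed differential inequality of the schematic form $\tfrac{d}{dt}\{(1+\beta t)^{k}E_{\lambda}(t)\}+(1+\beta t)^{k}\mathcal{D}(t)\lesssim k\beta(1+\beta t)^{k-1}E_{\lambda}(t)$, where---because the degenerate layer is governed by a third-order balance (cubic Sagdeev potential, $\tilde{\phi}\sim G^{-2}$, $\partial_x\tilde{\phi}\sim G^{-3}$)---the dissipation $\mathcal{D}$ driving the decay controls $\int(1+\beta x)^{\lambda-3}|(\varphi,\psi,\zeta)|^2$, three weight-powers below $E_{\lambda}$ (in contrast with the one-power loss of the exponentially decaying nondegenerate case); the usual splitting into $x<t$ and $x>t$ and induction on the weight, now run over roughly $\lambda/3$ steps, then give $E_{\varepsilon}(t)\lesssim(1+\beta t)^{-(\lambda-\varepsilon)/3}E_{\lambda}(0)$, which is the asserted estimate (the loss of $\varepsilon$ being the familiar price of trading spatial weight for temporal decay). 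With this bound the local solution extends globally by the standard continuation argument, completing the proof. The main obstacle is exactly this third step: establishing, uniformly as $\phi_{b},\beta\to0$, the positivity of the weighted dissipation functional in the presence of the slowly (algebraically) decaying sheath, and thereby pinning down the sharp range $\lambda<\lambda_{0}$.
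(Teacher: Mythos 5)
Your strategy matches the paper's actual proof essentially step for step: symmetrizing \eqref{1.16}, running the $(1+\beta x)^{\varepsilon}$-weighted energy method on the zeroth-order, $x$-derivative, and $t$-derivative levels (Lemmas \ref{main.result2.9}--\ref{main.result2.erer9}), coupling the hyperbolic flux to the Poisson equation through the Taylor expansion \eqref{2.3}, reducing the sheath-linear and weight-derivative contributions to the quadratic form $Q(x)$ whose positive definiteness forces exactly the cubic constraint \eqref{def.la0}, deriving the three-weight-power-degraded dissipation $\beta^3\|(\varphi,\psi,\zeta)\|_{\varepsilon-3,\beta}^2$, and closing with the Kawashima--Matsumura/Nishikawa time-weighted induction to extract the rate $(1+\beta t)^{-(\lambda-\varepsilon)/3}$. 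The only (cosmetic) difference is that the paper first multiplies \eqref{1.17} by $-\varepsilon\beta W_{\varepsilon-1,\beta}\sigma$ and applies Cauchy--Schwarz on $\sigma\psi$ to eliminate the $\sigma,\sigma_x$ variables before verifying positivity of $Q(x)$ in $(\varphi,\psi,\zeta)$ alone via the minor conditions \eqref{2.19}--\eqref{2.20}, whereas your description keeps $\sigma,\sigma_x$ in the quadratic form until the last step.
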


%


In what follows, we review some mathematical works related to the study of the subject in this paper and explain several crucial points in the proof of main results. First of all, for the stationary problem over a finite interval, Ambroso-M\'ehats-Raviart \cite{Asympto} studied the existence of monotone solutions by solving the Poisson equation with the small Debye length via the singular perturbation approach. Later, Ambroso \cite{Ambros} gave a further study to determine the stationary solutions in terms of different levels of an associated energy functional and numerically show which solution is asymptotically stable in large time. By simplifying the Euler-Poisson system on suitable physical regimes, Ha-Slemrod \cite{HS} presented a theory for the existence and dynamics of time-dependent sheaths with planar, cylindrical and spherical symmetry, where the sheath problem is formulated as a free boundary problem.

Through a delicate mathematical analysis, Suzuki \cite{Suzuki} first showed that the Bohm criterion gives a sufficient condition for an existence of the stationary solution by the phase plane method and also the stationary solution is time asymptotically stable under the weighted perturbations. Several extensions of \cite{Suzuki} have been made in \cite{NOS,boundaawa,multicomponent}. Specifically, Suzuki \cite{multicomponent}  further treated the same problem for a multicomponent plasma. Nishibata-Ohnawa-Suzuki \cite{NOS} refined the result in  \cite{Suzuki} by proving the stability exactly under the Bohm criterion in space dimensions up to three and also dealt with the degenerate case in which the Bohm criterion is marginally fulfilled.  Ohnawa \cite{boundaawa} studied the existence and asymptotic stability of boundary layers for the fluid-boundary interaction condition that the time change rate of the electric field $-\phi_x(t,0)$ at the boundary is equal to the total flux of charges.

Inspired by \cite{Suzuki} and \cite{NOS}, we expect to consider the effect of the variable temperature for the full Euler-Poisson system with the additional evolution equation of temperature function. Indeed, the existence of stationary solutions can be reduced to the isentropic case with the general $\gamma$-law pressure so that the proof is very similar to the isothermal Euler-Poisson system. For the asymptotic stability, we have to develop extra estimates to prove Theorem \ref{1.2theorem} and Theorem \ref{1.3 theorem}. Precisely, under the Bohm criterion,  we show that the stationary solution for the non-isentropic Euler-Poisson equations on a half line is time asymptotically stable provided that the initial perturbation is sufficiently small in the weighted Sobolev space. Moreover, the convergence rate of the solution toward the stationary
solution is obtained, provided that the initial perturbation belongs to the weighted Sobolev space. It seems that the non-isentropic case is more difficult to handle in analysis than the isentropic case. For example, in comparison with \cite{Suzuki}  and \cite{NOS}, we need to make additional efforts to consider the effect of the temperature equations in the proof. Technically, we observe that the only zero order dissipative term is associated with the weight parameter $\beta$, since this term arises from the integration by part of
the energy flux $I_{1}$ in \eqref{2.7}. In terms of the property of the stationary solution
in Proposition \ref{prop1.1}, two integral terms $I_{1}$ and $I_{2}$ in \eqref{2.7} should be added together to estimate. In fact, to estimate $I_{1}+I_{2}$ in the degenerate case \eqref{1.15b}, a key point is to derive the positive definiteness of the quadratic form $Q(x)$ in \eqref{2.1gh} that takes a complex form. The same situation occurs to the proof of Lemma \ref{main.result2.987} for the nondegenerate case \eqref{1.15a}.

In the end, we also mention \cite{Rousset, GHR, JKS, Jung} for the problem on the quasineutral limit of the Euler-Poisson system of the ions flow in the presence of boundaries and \cite{GGPS} for the derivation of the ions equations from the general two-fluid model in plasma physics. Meanwhile, for the Cauchy problem on the Euler-Poisson system of the similar form \eqref{1.1} for ions, we may refer to \cite{CG,GPu,LLS,LS,LY,Pu} and references therein for the extensive studies of the dispersive property.


The rest of the paper is arranged as follows. In Section 2, we give the energy estimates for the degenerate case. We make full use of the time-space weighted energy method to complete the proof of Theorem \ref{1.3 theorem}. In Section 3, we give the  energy estimates for the
nondegenerate case and complete the
proof of Theorem \ref{1.2theorem}. In the Appendix, we will give some basic results used in the proof of Proposition \ref{ste.pro1.1} and Proposition \ref{ste.pro2}.

\medskip
\noindent{\it Notations.} Throughout this paper, we denote a
positive constant (generally large) independent of $t$ by $C$. And
the character ``$C$" may take different values in different places.
 $L^p
= L^p(\mathbb{R}_{+}) \ (1 \leq p \leq \infty)$ denotes
 the usual Lebesgue space on $[0
,\infty)$ with its norm $ \|\cdot\|_ {L^p}$, and when $p=2$, we
write $ \| \cdot \| _{ L^2(\mathbb{R}_{+}) } = \| \cdot \|$. For a
nonnegative integer $s$,  $W^{s,p}$ denotes the usual $s$-th order
Sobolev space over $ [0,\infty)$ with its norm
$\|\cdot\|_{W^{s,p}}$. We use the abbreviation $H^s
(\mathbb{R}_{+})=W^{s,2}(\mathbb{R}_{+})$.
$C^{k}([0,T];H^s(\mathbb{R}_{+}))$ denotes the space of the $k$-times
continuously differential functions on the interval $[0,T]$ with
values in $H^s(\mathbb{R}_{+})$.
A norm with an algebraic weight is defined as follows:
$$\|f\|_{\alpha,\beta,i}:=\left(\int W_{\alpha,\beta}\sum_{j\leq
i}(\partial^{j}f)^{2}dx\right)^{\frac{1}{2}}, \ \ \ \ i,j\in
\mathbb{Z}, \ \ \ i,j\geq 0,$$
\begin{equation}
\label{def.W}
W_{\alpha,\beta}:=(1+\beta x)^{\alpha}, \ \ \ \ \alpha,\ \beta \in \mathbb{R},\ \ \ \beta>0.
\end{equation}
Note that this norm is equivalent to the norm defined by $\|(1+\beta
x)^{\frac{\alpha}{2}}f\|_{H^{i}}.$ The
 last subscript $i$ of $\|f\|_{\alpha,\beta,i}$ is often dropped for the case of $i=0$,
 namely, $\|f\|_{\alpha,\beta}:=\|f\|_{\alpha,\beta,0}$.

\section{Energy estimates in the degenerate case}
In this section, we study the asymptotic stability of the
stationary solution to $\eqref{1.1}$ for the degenerate case
$\eqref{1.15b}$, where the Bohm criterion is marginally fulfilled. In this case, we see from Proposition \ref{prop1.1} that the additional condition that $\phi_{b}>0$ is suitably small ensures the existence of a non-trivial monotone stationary solution to $\eqref{1.7}$ and $\eqref{1.8}$. To further show the dynamical stability of the stationary solution, we mainly focus on the {\it a priori} estimates that will be given in Proposition \ref{ste.pro1.1}. The global existence can be proved by the standard continuation argument based on the local
existence result together with the uniform {\it a priori} estimates. Here, the local-in-time existence can be proved by a similar method as in \cite{Suzuki} and we omit the  details for brevity.

In what follows we are devoted to establishing the {\it a priori} estimates in the degenerate case $\eqref{1.15b}$. For this purpose, we use the following notation for
convenience
\begin{eqnarray*}
 \mathcal
{N}_{\alpha,\beta}(M):=\sup_{0\leq t\leq
M}\|(\varphi,\psi,\zeta)(t)\|_{\alpha,\beta,2}.
\end{eqnarray*}

\begin{proposition}\label{ste.pro1.1}
Let the same conditions on
$T_{\infty}$, $u_{\infty}$, $\lambda_{0}$ and $\lambda$ as in
Theorem \ref{1.3 theorem} hold and let $(\varphi,\psi,\zeta,\sigma)$ be a solution to
$\eqref{1.16}$--$\eqref{1.19}$ over $[0,M]$ for $M>0$.
For any $\varepsilon\in(0,\lambda]$ and any $\theta\in(0,1]$,
there exist constants $\delta>0$ and $C>0$ independent of $M$ such that if all the following conditions
\begin{eqnarray}\label{a}
\phi_{b}\in(0,\delta],
\end{eqnarray}
\begin{eqnarray}\label{b}
\beta/(\Gamma \phi_{b}^{1/2}) \in [\theta,1],
\end{eqnarray}
\begin{eqnarray}\label{c}
((1+\beta x)^{\frac{\lambda}{2}}\varphi,(1+\beta
x)^{\frac{\lambda}{2}}\psi,(1+\beta
x)^{\frac{\lambda}{2}}\zeta,(1+\beta x)^{\frac{\lambda}{2}}\sigma)
\in (\mathscr{X}_{2}([0,M]))^{3}\times \mathscr{X}_{2}^{2}([0,M]),
\end{eqnarray}
and
\begin{eqnarray}\label{d}
\mathcal {N}_{\lambda,\beta}(M)/\beta^{3}\leq \delta
\end{eqnarray}
are satisfied, then it holds for any $0\leq t\leq M$
that
\begin{eqnarray}
\|(\varphi,\psi,\zeta)(t)\|_{\varepsilon,\beta,2}^{2}+\|\sigma(t)\|_{\varepsilon,\beta,4}^{2}\leq
C\|(\varphi_{0},\psi_{0},\zeta_{0})\|_{\lambda,\beta,2}^{2}(1+\beta
t)^{-(\lambda-\varepsilon)/3}.\label{prop2.1r1}
\end{eqnarray}
\end{proposition}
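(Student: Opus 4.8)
The plan is to establish the a priori estimate \eqref{prop2.1r1} by a time-space weighted energy method, combining a decay-inducing iteration in the weight exponent with a careful treatment of boundary contributions coming from the degenerate stationary layer. First I would rewrite the perturbation system \eqref{1.16}--\eqref{1.17} in a symmetrizable form by multiplying the hyperbolic part by a suitable positive-definite symmetrizer $A_0(\tilde n,\tilde u,\tilde T)$ (diagonal in this case up to factors $RT/\varphi$-type weights coming from $v=\log n$), so that the energy identity for the zeroth order produces, after multiplying by the algebraic weight $W_{\alpha,\beta}=(1+\beta x)^\alpha$ and integrating over $\mathbb{R}_+$, a term $\frac{d}{dt}\int W_{\alpha,\beta}\,\eta\,dx$ plus a dissipation term $\int \alpha\beta W_{\alpha-1,\beta}\,Q(x)\,dx$ coming from $\partial_x W_{\alpha,\beta}$ hitting the flux, plus boundary terms at $x=0$, plus commutator-type terms involving $\tilde v_x,\tilde u_x,\tilde T_x$ and the Poisson coupling through $\sigma_x$. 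The key algebraic point, flagged by the authors, is the positive definiteness of the quadratic form $Q(x)$ in the integrand of the weighted dissipation: here one uses the Bohm criterion \eqref{1.15b} together with the precise asymptotics of the stationary solution from Proposition \ref{prop1.1} (the $G(x)^{-k}$ expansions with the constants $c_i$) to show that $Q(x)\geq c\,(|\varphi|^2+|\psi|^2+|\zeta|^2)$ for $\phi_b$ small and $\beta/(\Gamma\phi_b^{1/2})\in[\theta,1]$, so that the zeroth-order weighted dissipation controls the full perturbation with a coefficient proportional to $\alpha\beta$.

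Next I would handle the Poisson term. Testing \eqref{1.17} against $\sigma$ with the weight, and using that $e^{\varphi+\tilde v}-e^{\tilde v}\approx e^{\tilde v}\varphi$ and $-e^{-(\sigma+\tilde\phi)}+e^{-\tilde\phi}\approx e^{-\tilde\phi}\sigma$ with $e^{\tilde v},e^{-\tilde\phi}$ bounded below, gives $\int W_{\alpha,\beta}(|\sigma_x|^2+\sigma^2)\,dx\lesssim \int W_{\alpha,\beta}\,\varphi^2\,dx + (\text{weight-derivative and boundary terms})$; the boundary term at $x=0$ vanishes because $\sigma(t,0)=0$ by \eqref{1.19}. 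This is what lets the $\sigma_x$ contribution appearing on the right-hand side of the momentum equation be absorbed. Higher-order estimates ($\partial_x$ and $\partial_x^2$, hence $H^2$ for $(\varphi,\psi,\zeta)$ and $H^4$ for $\sigma$ via the elliptic equation) follow by differentiating \eqref{1.16}--\eqref{1.17}, commuting derivatives past the $\tilde v,\tilde u,\tilde T$-dependent coefficients, and noting that all commutator terms carry a factor of a derivative of the stationary solution, which by Proposition \ref{prop1.1} decays like $G(x)^{-2}\lesssim (1+\beta x)^{-1}\beta$ (using $\beta\sim\Gamma\phi_b^{1/2}$); the nonlinear terms are cubic and are closed by the smallness assumption \eqref{d}, $\mathcal N_{\lambda,\beta}(M)/\beta^3\leq\delta$, together with the one-dimensional Sobolev embedding $H^1\hookrightarrow L^\infty$. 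Assembling these gives a weighted energy inequality of the form
\begin{equation*}
\frac{d}{dt}E_{\alpha,\beta}(t)+c\,\beta\,D_{\alpha-1,\beta}(t)\leq C\,\beta\,(\text{boundary at }0)+C\,\mathcal N_{\lambda,\beta}(M)\,D_{\alpha,\beta}(t),
\end{equation*}
where $E_{\alpha,\beta}\sim\|(\varphi,\psi,\zeta)\|_{\alpha,\beta,2}^2+\|\sigma\|_{\alpha,\beta,4}^2$ and $D$ is the analogous weighted dissipation; the boundary terms at $x=0$ are controlled using the trace inequality and the structure of the sheath (in particular $\sigma(t,0)=0$ kills the worst one, and the others are paid for by $\phi_b$-smallness).

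Finally I would run the decay argument. Since the weighted dissipation $D_{\alpha-1,\beta}$ controls $E_{\alpha-1,\beta}$ (one simply drops two powers of $(1+\beta x)$), we get $\frac{d}{dt}E_{\alpha,\beta}+c\beta E_{\alpha-1,\beta}\leq 0$ up to absorbable terms, and by interpolation $E_{\alpha-1,\beta}\geq (E_{\alpha,\beta})^{?}$ — more precisely one uses the standard time-weighted iteration of Matsumura--Nishihara/Kawashima type: multiply by $(1+\beta t)^{k}$ and integrate to trade one unit of the space-weight exponent for a factor $(1+\beta t)^{-1}$ in time, starting from $\alpha=\lambda$ and iterating down to $\alpha=\varepsilon$. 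Because the degeneracy makes the stationary layer decay only algebraically in $G(x)\sim \Gamma x+\phi_b^{-1/2}$ rather than exponentially, each iteration step costs a controlled amount and the cubic scaling $1/3$ (reflecting the cubic vanishing encoded in $G^{-2}$ versus the quadratic-in-$\phi_b$ amplitude) produces the exponent $-(\lambda-\varepsilon)/3$ in \eqref{prop2.1r1}; the constraint $\lambda<\lambda_0$ with $\lambda_0$ the root of \eqref{def.la0} is exactly what guarantees that the quadratic form $Q(x)$ stays positive through all the iteration steps down to $\varepsilon$ (a larger $\lambda$ would break positivity of the weighted dissipation). I expect the main obstacle to be precisely the verification of the positive definiteness of $Q(x)$: it is a $3\times3$ (or larger, after including $\sigma$) quadratic form whose coefficients are explicit but messy functions of the stationary solution through $G(x)$, and one must show nonnegativity uniformly in $x\in\mathbb{R}_+$ and uniformly for $\beta/(\Gamma\phi_b^{1/2})\in[\theta,1]$, which is where the transcendental bound $\lambda<\lambda_0<5.5693\cdots$ enters; everything else is a (lengthy but routine) weighted energy bookkeeping closed by the smallness hypotheses \eqref{a}--\eqref{d}.
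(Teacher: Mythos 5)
Your outline correctly identifies the architecture of the proof (symmetrization, algebraic space weights, positivity of the quadratic form $Q(x)$ arising from the weighted dissipation, an elliptic estimate for $\sigma$, and a Kawashima--Matsumura/Nishikawa type time-weight iteration), but there is one point where what you assert is actually false and, if taken at face value, would overshoot the conclusion. You claim that in the degenerate case one shows $Q(x)\geq c\,(\varphi^2+\psi^2+\zeta^2)$ uniformly in $x$. That cannot be right: with the coefficients $q_1,\dots,q_5$ one has $q_1q_5^2+q_4q_2^2-4q_1q_3q_4 \to 0$ as $x\to\infty$ (a direct computation using $q_1\to\tfrac{\varepsilon}{2}RT_\infty$, $q_2\to-\varepsilon RT_\infty$, $q_3\to\tfrac{\varepsilon}{2}\gamma RT_\infty$, $q_4\to\tfrac{\varepsilon R}{2(\gamma-1)T_\infty}$, $q_5\to-\varepsilon R$ gives exactly zero), so $Q(x)$ degenerates at infinity. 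The correct lower bound is only $Q(x)\geq c\,B(x)^{-2}(\varphi^2+\psi^2+\zeta^2)$ with $B(x)=x+\beta^{-1}$; the extra factor $B(x)^{-2}\sim \beta^2 W_{-2,\beta}$ is precisely what turns the weighted dissipation into $\beta^3\|(\varphi,\psi,\zeta)\|_{\varepsilon-3,\beta}^2$ and forces each step of the iteration to trade \emph{three} (not two) powers of the space weight for one power of $(1+\beta t)^{-1}$, hence the exponent $(\lambda-\varepsilon)/3$. Your final answer is right, but the intermediate bound you state would, if true, give the nondegenerate rate $(1+\beta t)^{-(\lambda-\varepsilon)}$; the tension between the two is a real gap in the argument as written.

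Two smaller discrepancies with the paper's route. First, the boundary terms at $x=0$ are not absorbed by a trace inequality plus $\phi_b$-smallness: the flux terms such as $-\CH_1^{\rm x}(t,0)$ and $-[e^{-\tilde\phi}\CH_0+\tfrac12\tilde n^2 u\varphi^2](t,0)$ are order-one quadratic forms in the boundary traces, and the point is that supersonicity ($u<0$, and in fact the full Bohm criterion) together with $\sigma(t,0)=0$ makes them \emph{nonnegative}, so they are simply dropped; there is nothing small to absorb. Second, for the top-order estimates the paper takes $\partial_t$ and $\partial_t\partial_x$ (Lemma~\ref{main.result2.erer9}) rather than $\partial_x^2$, precisely because $\sigma_t(t,0)=\sigma_{tt}(t,0)=0$ whereas $\sigma_{xx}(t,0)$ is not controlled; the pure space derivatives are then recovered from the equations (Lemma~\ref{main.result4.3} and Lemma~\ref{main.result4.4}). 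Your $\partial_x^2$ version would produce a boundary contribution involving $\sigma_{xx}(t,0)$ that has no sign and no smallness.
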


 For the proof of Proposition \ref{ste.pro1.1}, we need to first prove Lemma \ref{main.result2.9} which is a crucial step for deriving the {\it a priori} estimates on the zeroth order and first order space derivatives. After that, we give the estimates for the higher order derivatives in Lemma \ref{main.result2.erer9}. Proposition \ref{ste.pro1.1} is then proved by following Lemma \ref{main.result2.9} and  Lemma \ref{main.result2.erer9} at the end of this section.

\begin{lemma}\label{main.result2.9}
Under the same conditions as in Proposition \ref{ste.pro1.1}, there exist
positive constants $C$ and $\delta$ independent of $M$ such that if  conditions \eqref{a}, \eqref{b}, \eqref{c} and \eqref{d} are satisfied, it holds for
any $t\in[0,M]$ and any $\xi\geq 0$ that
 \begin{align}
&(1+\beta t)^{\xi}\|(\varphi,\psi,\zeta)(t)\|_{\varepsilon,\beta,1}^{2}\notag\\
&\quad+\int_{0}^{t} (1+\beta
\tau)^{\xi}\left[\beta^{3}\|(\varphi,\psi,\zeta)(\tau)\|_{\varepsilon-3,\beta}^{2}
+\beta\|(\varphi_{x},\psi_{x},\zeta_{x},\sigma_{x})(\tau)\|_{\varepsilon-1,\beta}^{2}\right]d\tau\notag\\
&\leq C\|(\varphi_{0},\psi_{0},\zeta_{0})\|_{\varepsilon,\beta,1}^{2}
+C\xi\beta\int_{0}^{t} (1+\beta
\tau)^{\xi-1}\|(\varphi,\psi,\zeta)(\tau)\|_{\varepsilon,\beta,1}^{2}d\tau.\label{2.10}
\end{align}
\end{lemma}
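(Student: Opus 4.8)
The plan is to run a weighted energy estimate on the perturbation system \eqref{1.16}--\eqref{1.17} at the level of zeroth and first order spatial derivatives, with the algebraic space weight $W_{\varepsilon,\beta}=(1+\beta x)^{\varepsilon}$ and the temporal weight $(1+\beta t)^{\xi}$. First I would symmetrize the hyperbolic part of \eqref{1.16}: multiplying by an appropriate symmetrizer (built from the entropy of the Euler system, essentially $\mathrm{diag}$-type weights involving $RT/n$, $m$, $RT/((\gamma-1)T)$ or the Riemann-invariant structure) produces a quadratic energy density $\mathcal E_0(\varphi,\psi,\zeta)$ whose time derivative, after integrating against $W_{\varepsilon,\beta}$ over $\mathbb R_+$, yields: (a) the boundary terms at $x=0$ coming from the flux $I_1+I_2$ in \eqref{2.7}, which by the sign of the characteristics $\lambda_i<0$ and the smallness of the sheath are controlled; (b) the interior dissipation generated by $\partial_x W_{\varepsilon,\beta}=\varepsilon\beta(1+\beta x)^{\varepsilon-1}$, which gives the crucial first-order term $\beta\|(\varphi_x,\psi_x,\zeta_x,\sigma_x)\|_{\varepsilon-1,\beta}^2$ after using the Poisson equation \eqref{1.17} to convert $\sigma_x$ contributions; and (c) error terms supported where $\tilde\phi_x\neq0$, i.e.\ decaying like $G(x)^{-2}\sim(\Gamma x+\phi_b^{-1/2})^{-2}$ by Proposition \ref{prop1.1}, which under \eqref{b} is comparable to $\beta^2(1+\beta x)^{-2}$ and so can be absorbed into (b) plus the zeroth-order term $\beta^3\|(\varphi,\psi,\zeta)\|_{\varepsilon-3,\beta}^2$.

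Next I would treat the Poisson equation: testing \eqref{1.17} against $W_{\varepsilon,\beta}\sigma$ and integrating by parts gives $\|\sigma_x\|_{\varepsilon,\beta}^2$ plus a good zeroth-order term (from linearizing $e^{-(\sigma+\tilde\phi)}-e^{-\tilde\phi}$, whose leading coefficient is $e^{-\tilde\phi}>0$) controlled by $C\|\varphi\|^2$ on the right, modulo a weight-derivative commutator of size $\beta\|\sigma\|\|\sigma_x\|$. This lets me express the $\sigma$-dissipation in \eqref{2.10} in terms of the hyperbolic variables. For the first-order estimate I differentiate \eqref{1.16}--\eqref{1.17} once in $x$, repeat the symmetrized energy argument on $(\varphi_x,\psi_x,\zeta_x)$, and handle the commutators $[\partial_x, A(\cdot)\partial_x]$ and the terms involving $\tilde v_{xx},\tilde u_{xx},\tilde T_{xx}$ (again $O(G^{-3})$, hence $O(\beta^3(1+\beta x)^{-3})$) by Cauchy--Schwarz against the already-available dissipation and the small factor $\mathcal N_{\lambda,\beta}(M)/\beta^3\leq\delta$ from \eqref{d}.

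Then I would combine the zeroth- and first-order estimates into a single inequality for $E(t):=\|(\varphi,\psi,\zeta)(t)\|_{\varepsilon,\beta,1}^2$ of the form $\frac{d}{dt}E+D\leq (\text{small})\cdot D + (\text{weight-commutator remainders})$, where $D$ is the bracketed dissipation in \eqref{2.10}; absorbing the small terms leaves $\frac{d}{dt}E+\frac12 D\leq 0$ pointwise (the remainder terms being genuinely of the form already inside $D$, by the $\beta$-bookkeeping above). Finally, multiplying by $(1+\beta t)^{\xi}$ and using $\frac{d}{dt}[(1+\beta t)^{\xi}E]=(1+\beta t)^{\xi}\frac{d}{dt}E+\xi\beta(1+\beta t)^{\xi-1}E$, then integrating from $0$ to $t$, produces exactly \eqref{2.10} with the $C\xi\beta\int_0^t(1+\beta\tau)^{\xi-1}\|(\varphi,\psi,\zeta)\|_{\varepsilon,\beta,1}^2\,d\tau$ term on the right.

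\textbf{Main obstacle.} The delicate point is step (b)--(c): showing that the interior energy identity actually produces a \emph{positive-definite} quadratic form in $(\varphi,\psi,\zeta,\varphi_x,\psi_x,\zeta_x,\sigma_x)$ after the weight differentiation, rather than an indefinite one — this is the positive-definiteness of $Q(x)$ in \eqref{2.1gh} alluded to in the introduction. The coefficients of $Q$ mix the symmetrizer entries, the factor $\varepsilon\beta$ from $\partial_x W_{\varepsilon,\beta}$, the sheath profile, and (in the degenerate case) the degenerating sound speed; verifying $Q\gtrsim\beta\,(\text{first order})+\beta^3\,(\text{zeroth order})$ uniformly for $\lambda\in[4,\lambda_0)$ and $\beta/(\Gamma\phi_b^{1/2})\in[\theta,1]$ is where the restriction $\lambda<\lambda_0$ (the root of \eqref{def.la0}) must enter, exactly as the cubic threshold in \eqref{def.5.5} governs the borderline. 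I expect this algebraic positivity check — reducing $Q$ to a sum of squares or checking leading principal minors with the $G^{-2}\sim\beta^2$ substitution — to be the technical heart of the lemma; everything else is bookkeeping with Sobolev embedding $H^1(\mathbb R_+)\hookrightarrow L^\infty$ and the smallness parameters $\phi_b$, $\delta$.
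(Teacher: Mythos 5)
Your proposal matches the paper's approach essentially step for step: symmetrize \eqref{1.16}, run weighted energy identities at zeroth and first order, use the negativity of all characteristics to control the boundary terms, exploit $\partial_x W_{\varepsilon,\beta}$ for the interior dissipation, control $\sigma$ via the Poisson equation, and reduce the crux to the positive definiteness of the quadratic form $Q(x)$ in \eqref{2.1gh}, which is exactly where the threshold $\lambda_0$ of \eqref{def.la0} enters through the determinant condition \eqref{2.20}. One detail your sketch leaves implicit and which is in fact the structural pivot: the paper does not simply add the two energy identities, but forms the combination $e^{-\tilde\phi}\times(\text{zeroth})+(\text{first})$, so that the $\sigma$-coupling terms $\tilde n\psi_x\sigma$ and $-\tilde n\psi_x\sigma_{xx}$ merge into $\tilde n(e^{-\tilde\phi}\sigma-\sigma_{xx})\psi_x$ and, via the Taylor-expanded Poisson equation \eqref{2.3} together with the continuity equation, collapse into the full time derivative $(\tfrac12\tilde n^2\varphi^2)_t$ plus controllable remainders — a naive sum would not produce this cancellation, and treating $\tilde n\psi_x\sigma$ directly by Cauchy--Schwarz fails because $\psi_x$ only has $\beta$-weighted dissipation. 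Also, $Q(x)$ is a quadratic form in $(\varphi,\psi,\zeta)$ alone; the first-derivative dissipation comes separately from the flux $\CH_1^{\rm x}$ via \eqref{2.11a} (a positive-definite form under the Bohm criterion, with no $G^{-2}$ corrections needed), and the $\sigma_x$ dissipation from testing \eqref{1.17} against $-\varepsilon\beta W_{\varepsilon-1,\beta}\sigma$ rather than $W_{\varepsilon,\beta}\sigma$, chosen so as to absorb the $\sigma^2$ term in $I_{1,2}$.
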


\begin{proof}
We start to derive from \eqref{1.16} and \eqref{1.17} several identities which will are used in the late energy estimates. First, it is convenient to rewrite $\eqref{1.16}$ as
\begin{eqnarray}\label{2.0}
\begin{aligned}[b]
\left(\begin{array} {ccc}
RT \ \ & 0 & \ \ 0\\
0 \ \ & m & \ \ 0\\
0 \ \ & 0 & \ \ \frac{R}{(\gamma-1)T} \\
\end{array} \right)
\left(\begin{array} {c}
\varphi \\
\psi\\
\zeta\\
\end{array} \right)_{t}&+\left(\begin{array} {ccc}
RTu \ \ & RT & \ \ 0\\
RT \ \ & mu & \ \ R\\
0 \ \ & R & \ \ \frac{R u}{(\gamma-1)T}\\
\end{array} \right)
\left(\begin{array} {c}
\varphi \\
\psi\\
\zeta\\
\end{array} \right)_{x}-\left(\begin{array} {c}
0 \\
\sigma\\
0\\
\end{array} \right)_{x}\\[2mm]
&=-\left(\begin{array} {ccc}
RT\psi \ \ & 0 & \ \ 0\\
R\zeta \ \ & m\psi & \ \ 0\\
0 \ \ & \frac{R\zeta}{T} & \ \ \frac{R\psi}{(\gamma-1)T}\\
\end{array} \right)
\left(\begin{array} {c}
\tilde{v} \\
\tilde{u}\\
\tilde{T}\\
\end{array} \right)_{x}.
\end{aligned}
\end{eqnarray}
Taking the inner product of $\eqref{2.0}$ with $\tilde{n}(\varphi,\psi,\zeta)$ and using $\tilde{v}_{x}=\frac{\tilde{n}_{x}}{\tilde{n}}$, one can get that
\begin{equation}
\label{2.1}
(\CE_0)_t+(\CH_0)_x+\CD_0+\tilde{n}\psi_{x}\sigma=\CR_0,
\end{equation}
where we have denoted
\begin{equation}
\label{def.e0}
\CE_0=\frac{\tilde{n}}{2}RT\varphi^{2}+\frac{\tilde{n}}{2}m\psi^{2}+\frac{\tilde{n}R}{2(\gamma-1)T}\zeta^{2},
\end{equation}
\begin{equation*}
\CH_0=\frac{\tilde{n}}{2}RTu\varphi^{2}+\tilde{n}RT\varphi\psi+\frac{\tilde{n}}{2}mu\psi^{2}+R\tilde{n}\zeta\psi+\frac{\tilde{n}R
u}{2(\gamma-1)T}\zeta^{2}
-\tilde{n}\sigma\psi,
\end{equation*}
\begin{eqnarray*}
\CD_0 & = & \left(-\frac{R T
u}{2}\tilde{n}_{x}-\frac{R \tilde{n}u}{2}\tilde{T}_{x}-\frac{R
\tilde{n}T}{2}\tilde{u}_{x}\right)\varphi^{2}-\tilde{n}R\tilde{T}_{x}\varphi\psi
+\left(\frac{m\tilde{n}}{2}\tilde{u}_{x}-\frac{mu}{2}\tilde{n}_{x}\right)\psi^{2}\\[2mm]
&&+\frac{R\tilde{n}}{(\gamma-1)T}\tilde{T}_{x}\zeta\psi+\tilde{n}_{x}\sigma\psi+\left(\frac{R\tilde{n}}{T}\tilde{u}_{x}
-\frac{R u
\tilde{n}_{x}+R\tilde{n}\tilde{u}_{x}}{2(\gamma-1)T}+\frac{R u
\tilde{n} \tilde{T}_{x}}{2(\gamma-1)T^{2}}\right)\zeta^{2},
\end{eqnarray*}
and
\begin{eqnarray}
\CR_0 &=&\left(\frac{\tilde{n}R}{2}\zeta_{t}+\frac{\tilde{n}R
u}{2}\zeta_{x}+\frac{\tilde{n}R
T}{2}\psi_{x}\right)\varphi^{2}+R\tilde{n}\zeta_{x}\varphi\psi+\frac{m\tilde{n}}{2}\psi_{x}\psi^{2}\notag\\[2mm]
&&+\left(\frac{R\tilde{n}\psi_{x}}{2(\gamma-1)T}-\frac{R\tilde{n}\zeta_{t}}{2(\gamma-1)T^{2}}-
\frac{R\tilde{n}u\zeta_{x}}{2(\gamma-1)T^{2}}\right)\zeta^{2}.
\label{def.r0}
\end{eqnarray}
Taking the one order $x$-derivative on $\eqref{2.0}$, further taking the inner product of the resulting system with $\tilde{n}(\varphi_{x},\psi_{x},\zeta_{x})$ and using $\tilde{v}_{x}=\frac{\tilde{n}_{x}}{\tilde{n}}$ again, similarly for obtaining \eqref{2.1}, one has
\begin{equation}
\label{2.2}
(\CE_1^{\rm x})_t+(\CH_1^{\rm x})_x-\tilde{n}\psi_{x}\sigma_{xx}=\CR_1^{\rm x},
\end{equation}
where we also have denoted
\begin{equation}
\label{def.e1x}
\CE_1^{\rm x}=\frac{\tilde{n}}{2}RT\varphi_{x}^{2}+\frac{\tilde{n}}{2}m\psi_{x}^{2}+\frac{\tilde{n}R}{2(\gamma-1)T}\zeta_{x}^{2},
\end{equation}
\begin{equation}
\label{def.h1x}
\CH_1^{\rm x}=\frac{\tilde{n}}{2}RTu\varphi_{x}^{2}+\tilde{n}RT\varphi_{x}\psi_{x}+\frac{\tilde{n}}{2}mu\psi_{x}^{2}+R\tilde{n}\zeta_{x}\psi_{x}+\frac{\tilde{n}R
u}{2(\gamma-1)T}\zeta_{x}^{2},
\end{equation}
and
\begin{eqnarray}
\CR_1^{\rm x}&=&\left[\frac{\tilde{n}R}{2}\zeta_{t}+ \frac{R T
u}{2}\tilde{n}_{x}-\frac{R \tilde{n}}{2}(T u)_{x}\right]\varphi_{x}^{2}
+\left[\frac{m
u}{2}\tilde{n}_{x}-\frac{m\tilde{n}}{2}u_{x}-m\tilde{n} \tilde{u}
_{x} \right]\psi_{x}^{2}\notag\\[2mm]
&&+\left[ \frac{R
u}{2(\gamma-1)T}\tilde{n}_{x}+\frac{R
\tilde{n}u}{2(\gamma-1)T^{2}}T_{x}-\frac{\tilde{n}R}{2(\gamma-1)T^{2}}\zeta_{t}-\frac{R
\tilde{n}}{2(\gamma-1)T}u_{x}-\frac{R
\tilde{n}}{T}\tilde{u}_{x}\right]\zeta_{x}^{2}\notag\\[2mm]
&&
 -R\tilde{n}T_{x}(\varphi_{t}+\psi
_{x})\varphi_{x}-R\tilde{n}_{x}\psi
 T_{x}\varphi_{x}+\frac{R\tilde{n}}{(\gamma-1)T^{2}}\zeta_{t}\zeta_{x}T_{x}
 +\frac{R\tilde{n}}{T^{2}}\tilde{u}_{x}\zeta\zeta_{x}T_{x}\notag\\[2mm]
&&+\frac{R\tilde{n}}{(\gamma-1)T^{2}}\tilde{T}_{x}\psi\zeta_{x}T_{x}-
\frac{R\tilde{n}}{(\gamma-1)T}\tilde{T}_{x}\psi_{x}\zeta_{x}
-R\tilde{n}\tilde{v}_{xx}(T\psi\varphi_{x}+\zeta\psi_{x})\notag\\[2mm]
&&-\tilde{n}\tilde{u}_{xx}(m\psi\psi_{x}+\frac{R}{T}\zeta\zeta_{x})
-\frac{R\tilde{n}}{(\gamma-1)T}\tilde{T}_{xx}\psi\zeta_{x}.
\label{def.r1x}
\end{eqnarray}

In what follows, we are going to multiply \eqref{2.1} by $e^{-\tilde{\phi}}$ and add the resulting equation together with \eqref{2.2}. To treating terms involving $\si$ in this process, we first notice that
 \begin{equation}
\label{2.3p1}
\tilde{n}\psi_{x}\sigma\cdot e^{-\tilde{\phi}}+(-\tilde{n}\psi_{x}\sigma_{xx})=\tilde{n}(e^{-\tilde{\phi}}\sigma-\sigma_{xx})\psi_{x}.
\end{equation}
Recall $\eqref{1.17}$. It follows from the Taylor expansion that
\begin{eqnarray}\label{2.3}
 \begin{aligned}
\sigma_{xx}=\tilde{n}(\varphi+\frac{1}{2}e^{\theta_{1}\varphi}\varphi^{2})+e^{-\tilde{\phi}}(\sigma-\frac{1}{2}e^{-\theta_{2}\sigma}\sigma^{2}),\
\ \  \theta_{1},\theta_{2}\in(0,1).
\end{aligned}
\end{eqnarray}
Substituting the above form into the right-hand side of \eqref{2.3p1} and using the first component equation of \eqref{1.16}, one has
\begin{eqnarray}
\tilde{n}\psi_{x}\sigma\cdot e^{-\tilde{\phi}}+(-\tilde{n}\psi_{x}\sigma_{xx})&=&(\frac{1}{2}\tilde{n}^{2}\varphi^{2})_{t}+
(\frac{1}{2}\tilde{n}^{2}u\varphi^{2})_{x}-\frac{1}{2}\tilde{n}^{2}\tilde{u}_{x}\varphi^{2}-\tilde{n}u\tilde{n}_{x}\varphi^{2}
+\tilde{n}^{2}\tilde{v}_{x}\varphi\psi\notag\\[2mm]
&&-\frac{1}{2}\tilde{n}^{2}\psi_{x}\varphi^{2}-\frac{\tilde{n}}{2}\left(\tilde{n}e^{\theta_{1}\varphi}\varphi^{2}
-e^{-(\theta_{2}\sigma+\tilde{\phi})}\sigma^{2}\right)\psi_{x}.
\label{2.4}
\end{eqnarray}
Therefore, taking the procedure \eqref{2.1}$\times e^{-\tilde{\phi}}+$\eqref{2.4} gives that
\begin{eqnarray}
&\dis\left(e^{-\tilde{\phi}}\CE_0
+\CE_1^{\rm x}+\frac{1}{2}\tilde{n}^{2}\varphi^{2}\right)_{t}
+\left(e^{-\tilde{\phi}}\CH_0+\CH_1^{\rm x}
+\frac{1}{2}\tilde{n}^{2}u\varphi^{2}\right)_{x}
+e^{-\tilde{\phi}}\tilde{\phi}_{x}\CH_0+e^{-\tilde{\phi}}\CD_0
\notag\\
&\dis+(-\tilde{n}u\tilde{n}_{x}-\frac{1}{2}\tilde{n}^{2}\tilde{u}_{x}
)\varphi^{2}+\tilde{n}^{2}\tilde{v}_{x}\varphi\psi =\CN_1,
\label{2.5}
\end{eqnarray}
where
\begin{equation}\label{2.6}
\CN_{1}=e^{-\tilde{\phi}}\CR_0+\CR_1^{\rm x}
+\frac{1}{2}\tilde{n}^{2}\psi_{x}\varphi^{2}+\frac{\tilde{n}}{2}\left(\tilde{n}e^{\theta_{1}\varphi}\varphi^{2}
-e^{-(\theta_{2}\sigma+\tilde{\phi})}\sigma^{2}\right)\psi_{x}.
\end{equation}

Recall \eqref{def.la0} and \eqref{def.5.5} for the definition of $\la_0$. Let $\la\in (4,\la_0]$ and then $\varepsilon\in (0,\la]$ be given.  We choose a space weight function $W_{\varepsilon,\beta}=(1+\beta x)^\varepsilon$ as in \eqref{def.W} for a suitable parameter $\beta>0$ depending on $\varepsilon$ to be determined later. Then, multiplying $\eqref{2.5}$ by $ W_{\varepsilon,\beta}$ and
integrating the resulting equation over $\mathbb{R}_{+}$,
one deduces that
\begin{eqnarray}\label{2.7}
\begin{aligned}[b]
&\frac{d}{dt}\int_{\mathbb{R}_{+}} W_{\varepsilon,\beta}
\left[e^{-\tilde{\phi}}\CE_0
+\CE_1^{\rm x}+\frac{1}{2}\tilde{n}^{2}\varphi^{2}\right]dx\\[2mm]
&\quad+\underbrace{\int_{\mathbb{R}_{+}}\varepsilon\beta W_{\varepsilon-1,\beta}\CL_1dx}_{I_{1}}
 +\int_{\mathbb{R}_{+}}\varepsilon\beta
W_{\varepsilon-1,\beta}(-\CH_1^{\rm x})
dx
+\underbrace{\int_{\mathbb{R}_{+}} W_{\varepsilon,\beta}\CL_{2} dx}_{I_{2}}\\[2mm]
&\quad-[e^{-\tilde{\phi}}\CH_0
+\frac{1}{2}\tilde{n}^{2}u\varphi^{2}](t,0)-\CH_1^{\rm x}(t,0)=\int_{\mathbb{R}_{+}} W_{\varepsilon,\beta}\CN_{1} dx,
\end{aligned}
\end{eqnarray}
where
\begin{equation*}
\CL_1=
-e^{-\tilde{\phi}}\CH_0-\frac{1}{2}\tilde{n}^{2}u\varphi^{2}
\end{equation*}
and
\begin{eqnarray*}
\CL_{2}&=&e^{-\tilde{\phi}}\tilde{\phi}_{x}\CH_0+e^{-\tilde{\phi}}\CD_0+(-\tilde{n}u\tilde{n}_{x}-\frac{1}{2}\tilde{n}^{2}\tilde{u}_{x}
)\varphi^{2}+\tilde{n}^{2}\tilde{v}_{x}\varphi\psi.
\end{eqnarray*}

Now we estimate each term in $\eqref{2.7}$ and we shall frequently use the results obtained in Section \ref{Append}. First, one can decompose $u$ and $T$ as $u=\psi+(\tilde{u}-u_{\infty})+u_{\infty}$ and  $T=\zeta+(\tilde{T}-T_{\infty})+T_{\infty}$, respectively. Recall Lemma \ref{1.1}(iv)
and Lemma \ref{main.result4.2}, as well as \eqref{def.h1x} for $\CH_1^{\rm x}$. Then one sees, under the condition $\eqref{1.15a}$ and
 $\eqref{a}$-$\eqref{d}$, that
\begin{eqnarray}\label{2.10a}
 \begin{aligned}[b]
&\frac{-\CH_1^{\rm x}}{\tilde{n}}=-\frac{1}{2}RTu\varphi_{x}^{2}-RT\varphi_{x}\psi_{x}-\frac{1}{2}mu\psi_{x}^{2}-R\zeta_{x}\psi_{x}-\frac{R
u}{2(\gamma-1)T}\zeta_{x}^{2}\\[2mm]
&\geq\frac{1}{2}RT_{\infty}(-u_{\infty})\varphi_{x}^{2}-RT_{\infty}\varphi_{x}\psi_{x}+\frac{1}{2}m(-u_{\infty})\psi_{x}^{2}
-R\zeta_{x}\psi_{x}+\frac{R
(-u_{\infty})}{2(\gamma-1)T_{\infty}}\zeta_{x}^{2}\\[2mm]
&\qquad-C(\mathcal
{N}_{\lambda,\beta}(M)+\phi_{b})(\varphi_{x}^{2}+\psi_{x}^{2}+\zeta_{x}^{2})\\[2mm]
&\geq (c-C\delta\beta)(\varphi_{x}^{2}+\psi_{x}^{2}+\zeta_{x}^{2}).
\end{aligned}
\end{eqnarray}
Therefore, it follows that the third term on the left-hand side of \eqref{2.7} can be estimated as
\begin{equation}\label{2.11a}
\int_{\mathbb{R}_{+}}\varepsilon\beta
W_{\varepsilon-1,\beta} (-\CH_1^{\rm x})dx
\geq
c\beta\|(\varphi_{x},\psi_{x},\zeta_{x})\|_{\varepsilon-1,\beta}^{2}.
\end{equation}
Here and in the sequel we have omitted the explicit dependence of $c>0$ on $\varepsilon$ for brevity and instead we would only emphasize the dependence of the constant coefficient on $\beta$.
In the same way as for treating \eqref{2.10a}, with the help of $u_{\infty}<0$ and the boundary condition $\sigma(t,0)=0$ as well as the smallness of $\delta>0$, for the boundary terms on the left-hand side of \eqref{2.7}, one has
\begin{equation}\label{2.11}
-\CH_1^{\rm x}(t,0)\geq 0
\end{equation}
and
\begin{equation}\label{2.11b}
-\left[e^{-\tilde{\phi}}\CH_0
+\frac{1}{2}\tilde{n}^{2}u\varphi^{2}\right](t,0)\geq 0.
\end{equation}

It remains to estimate two terms $I_{1}$ and
$I_{2}$ on the left-hand side of $\eqref{2.7}$. The
key is to make full use of properties of the stationary solution in Lemma \ref{1.1}(iv).  Through careful computations, one can capture the full energy dissipation of all the zero-order components with the positive coefficient. In fact, using Lemma \ref{1.1}(iv) and Lemma \ref{main.result4.2} together with the identity that $\tilde{n}(x)\tilde{u}(x)\equiv u_{\infty}$ and recalling $G=G(x)=\Ga x +\phi_b^{-1/2}$ as in \eqref{def.Gx} with the constant $\Gamma$ defined in \eqref{def.conGa}, one has
 \begin{align}
 I_{1} &\geq\int_{\mathbb{R}_{+}}\varepsilon\beta
W_{\varepsilon-1,\beta}\Big\{
\frac{1-G^{-2}}{2}(RT_{\infty}+1)|u_{\infty}|\varphi^{2}-(1-2G^{-2})RT_{\infty}\varphi\psi\notag\\[2mm]
&\qquad\qquad\qquad\qquad+\frac{1-G^{-2}}{2}m|u_{\infty}|\psi^{2}
+(1-G^{-2})\frac{R
|u_{\infty}|}{2(\gamma-1)T_{\infty}}\zeta^{2}\notag\\[2mm]
&\qquad\qquad\qquad\qquad-(1-2G^{-2})R\zeta\psi+(1-2G^{-2})\sigma\psi\Big\}dx\notag\\[2mm]
&\quad
-C\mathcal
{N}_{\lambda,\beta}(M)\|(\varphi,\psi,\zeta,\sigma)\|^{2}_{\varepsilon-3,\beta}\notag\\[2mm]
&\quad-C\phi_{b}\int_{\mathbb{R}_{+}}\beta
W_{\varepsilon-1,\beta}G^{-2}(\varphi^{2}+\psi^{2}+\zeta^{2}+\sigma^{2})dx,\label{2.12}
\end{align}
and
 \begin{align}
 I_{2}\geq&\int_{\mathbb{R}_{+}}
W_{\varepsilon,\beta}G^{-3}\Gamma|u_{\infty}|\left\{(\gamma
RT_{\infty}+1)\varphi^{2}+\frac{2(1-\gamma
RT_{\infty})}{|u_{\infty}|}\varphi\psi+3m\psi^{2}\right.\notag\\[2mm]
&\qquad\qquad\qquad\qquad\qquad\qquad\left.+\frac{4}{|u_{\infty}|}\sigma\psi+\frac{\gamma
R}{(\gamma-1)T_{\infty}}\zeta^{2}\right\}dx\notag\\[2mm] &
-C(\mathcal {N}_{\lambda,\beta}(M)+\phi_{b})\int_{\mathbb{R}_{+}}
W_{\varepsilon,\beta}G^{-3}(\varphi^{2}+\psi^{2}+\zeta^{2}+\sigma^{2})dx.\label{2.13}
\end{align}
Adding $\eqref{2.12}$ to  $\eqref{2.13}$ together and further using the Cauchy-Schwarz inequality
$$
\sigma\psi\geq-\left(\frac{|u_{\infty}|}{2}\sigma^{2}+\frac{1}{2|u_{\infty}|}\psi^{2}\right)
$$
and the condition $\eqref{1.15b}$, one has
\begin{align}
I_{1}+I_{2} &\geq I_{1,2}-C\mathcal
{N}_{\lambda,\beta}(M)\|(\varphi,\psi,\zeta,\sigma)\|^{2}_{\varepsilon-3,\beta}-C\phi_{b}\int_{\mathbb{R}_{+}}\beta
W_{\varepsilon-1,\beta}G^{-2}(\varphi^{2}+\psi^{2}+\zeta^{2}+\sigma^{2})dx\notag\\
&\quad-C(\mathcal
{N}_{\lambda,\beta}(M)+\phi_{b})\int_{\mathbb{R}_{+}}
W_{\varepsilon,\beta}G^{-3}(\varphi^{2}+\psi^{2}+\zeta^{2}+\sigma^{2})dx,\label{2.14}
\end{align}
where we have defined
\begin{eqnarray}\label{2.15}
 \begin{aligned}[b]
I_{1,2}=&\int_{\mathbb{R}_{+}}\left\{\frac{\varepsilon\beta}{2}W_{\varepsilon-1,\beta}(1-G^{-2})(RT_{\infty}+1)|u_{\infty}|+\Gamma|u_{\infty}|(\gamma
RT_{\infty}+1)W_{\varepsilon,\beta}G^{-3}\right\}\varphi^{2}dx\\[2mm] &
+\int_{\mathbb{R}_{+}}\left\{-RT_{\infty} \varepsilon\beta
W_{\varepsilon-1,\beta}(1-2G^{-2})+2\Gamma(1-\gamma
RT_{\infty})W_{\varepsilon,\beta}G^{-3} \right\}\varphi \psi dx\\[2mm] &
+
\frac{1}{|u_{\infty}|}\int_{\mathbb{R}_{+}}\left\{W_{\varepsilon-1,\beta}\frac{\varepsilon\beta}{2}[\gamma
RT_{\infty}+(1-\gamma RT_{\infty})G^{-2}]+\Gamma(3\gamma RT_{\infty}+1)W_{\varepsilon,\beta}G^{-3}\right\}\psi^{2}dx\\[2mm] &
-\frac{|u_{\infty}|}{2}\int_{\mathbb{R}_{+}}\left\{\varepsilon\beta
W_{\varepsilon-1,\beta}(1-2G^{-2})+4\Gamma W_{\varepsilon,\beta}G^{-3} \right\}\sigma^{2} dx\\[2mm] &
+\int_{\mathbb{R}_{+}}\left\{\frac{\varepsilon\beta}{2}W_{\varepsilon-1,\beta}\frac{R|u_{\infty}|}{(\gamma-1)T_{\infty}}(1-G(x)^{-2})
+\frac{\Gamma\gamma R|u_{\infty}|}{(\gamma-1)T_{\infty}}W_{\varepsilon,\beta}G^{-3}\right\}\zeta^{2}dx\\[2mm] &
-\int_{\mathbb{R}_{+}}\varepsilon\beta W_{\varepsilon-1,\beta} R(1-2G^{-2})\zeta
\psi dx.
\end{aligned}
\end{eqnarray}

Now we {\it claim} a key estimate on the coercivity of $I_{1,2}$ as follows:
\begin{eqnarray}\label{2.16}
 \begin{aligned}[b]
I_{1,2}\geq c \beta^{3}
\|(\varphi,\psi,\zeta)\|_{\varepsilon-3,\beta}^{2}+c\beta\|\sigma_{x}\|_{\varepsilon-1,\beta}^{2},
\end{aligned}
\end{eqnarray}
where as mentioned before, the constant $c>0$ may depend on $\varepsilon$ but not on $\beta$. Indeed, multiplying $\eqref{1.17}$ by $-\varepsilon\beta \sigma
W_{\varepsilon-1,\beta} $ and integrating the resulting equation over $\mathbb{R}_{+}$ with the help of Lemma \ref{1.1}(iv) and the Cauchy-Schwarz
inequality, it follows that
\begin{align*}
&\int_{\mathbb{R}_{+}} \varepsilon\beta
W_{\varepsilon-1,\beta}\left\{\sigma_{x}^{2}+\frac{1}{2}(1-2G^{-2})\sigma^{2}
\right\}dx\notag \\[2mm]
&\leq
\int_{\mathbb{R}_{+}}\frac{\varepsilon\beta}{2}W_{\varepsilon-1,\beta}(1-2G^{-2})\varphi^{2}dx+\int_{\mathbb{R}_{+}}\frac{1}{2}\varepsilon(\varepsilon-1)(\varepsilon-2)
\beta^{3}W_{\varepsilon-3,\beta}\varphi^{2}dx\notag\\[2mm]
&\quad+C(\beta^{2}+\phi_{b}+\mathcal{N}_{\lambda,\beta}(M)\beta^{-2})\beta^{3}\|\varphi\|_{\varepsilon-3,\beta}^{2}.
\end{align*}
Applying the above estimate into the fourth term on the right-hand side of $\eqref{2.15}$, we are able to obtain
 \begin{align}
I_{1,2}&\geq  \int_{\mathbb{R}_{+}}\beta
W_{\varepsilon-1,\beta}Q(x)dx+|u_{\infty}|\varepsilon\beta\|\sigma_{x}\|_{\varepsilon-1,\beta}^{2}\notag\\[2mm]
&\quad-C(\beta^{2}+\phi_{b}+\mathcal{N}_{\lambda,\beta}(M)\beta^{-2})\beta^{3}\|\varphi\|_{\varepsilon-3,\beta}^{2}, \label{2.18}
\end{align}
where $Q(x)$  is a quadratic form of $\varphi,$ $\psi$ and  $\zeta$ defined by
\begin{eqnarray}\label{2.1gh}
Q(x)=|u_{\infty}|q_{1}(x)\varphi^{2}+q_{2}(x)\varphi\psi+\frac{1}{|u_{\infty}|}q_{3}(x)\psi^{2}
+|u_{\infty}|q_{4}(x)\zeta^{2}+q_{5}(x)\zeta\psi,
\end{eqnarray}
with
\begin{eqnarray*}
q_{1}(x)&=&\frac{\varepsilon}{2}RT_{\infty}+B(x)^{-2}\Gamma^{-2}\Big\{\frac{(1-RT_{\infty})\varepsilon}{2}S(x)^{2}
+(\gamma
RT_{\infty}-1)S(x)^{3}\\[2mm]
&&\qquad\qquad\qquad\qquad\qquad-\frac{\Gamma^{2}}{2}\varepsilon(\varepsilon-1)(\varepsilon-2)
\Big\},\\[2mm]
q_{2}(x)&=&-RT_{\infty}\varepsilon+B(x)^{-2}\Gamma^{-2}\left\{2\varepsilon
RT_{\infty} S(x)^{2} +2(1-\gamma RT_{\infty})S(x)^{3} \right\},\\[2mm]
q_{3}(x)&=&\frac{\varepsilon}{2}\gamma
RT_{\infty}+B(x)^{-2}\Gamma^{-2}\left\{\frac{(1-\gamma
RT_{\infty})\varepsilon}{2}S(x)^{2} +(3\gamma RT_{\infty}+1)S(x)^{3}
\right\},\\[2mm]
q_{4}(x)&=&\frac{\varepsilon R}{2(\gamma-1)T_{\infty}}
+B(x)^{-2}\Gamma^{-2}\left\{-\frac{\varepsilon
R}{2(\gamma-1)T_{\infty}}S(x)^{2} +\frac{\gamma
R}{(\gamma-1)T_{\infty}}S(x)^{3} \right\},
\end{eqnarray*}
and
\begin{equation*}
q_{5}(x)=-\varepsilon R+2\varepsilon R
B(x)^{-2}\Gamma^{-2}S(x)^{2}.
\end{equation*}
Here, functions $B(x)$ and $S(x)$ are given by
\begin{eqnarray*}
B(x)=x+\beta^{-1},
\end{eqnarray*}
and
\begin{eqnarray*}
S(x)=(x+\beta^{-1})/(x+\Gamma^{-1}\phi_{b}^{-\frac{1}{2}}),
\end{eqnarray*}
respectively.
We claim that
\begin{eqnarray}\label{2.19}
 \begin{aligned}[b]
 q_{1}(x)>0,\ \ \  q_{3}(x)>0,\ \ \
 q_{4}(x)>0,
\end{aligned}
\end{eqnarray}
\begin{eqnarray}\label{2.19a}
 \begin{aligned}[b]
q_{2}(x)^{2}-4q_{1}(x)q_{3}(x)<0, \ \ \
q_{5}(x)^{2}-4q_{3}(x)q_{4}(x)<0,
\end{aligned}
\end{eqnarray}
and
\begin{eqnarray}\label{2.20}
 \begin{aligned}[b]
q_{1}(x)q_{5}(x)^{2}+q_{4}(x)q_{2}(x)^{2}-4q_{1}(x)q_{3}(x)q_{4}(x)
\leq -c B(x)^{-2}.
\end{aligned}
\end{eqnarray}
In fact, we observe from $\eqref{a}$-$\eqref{d}$ that
$$
S(x)\geq 1,\quad B(x)^{-2}\leq \beta^{2}\leq C
\phi_{b}\leq C \delta.
$$
Using the above observation and letting $\delta>0$ be small enough, it is straightforward to prove $\eqref{2.19}$ and $\eqref{2.19a}$; the details are omitted for brevity.
As for $\eqref{2.20}$, recalling \eqref{def.conGa},  one has
 \begin{align}
&q_{1}(x)q_{5}(x)^{2}+q_{4}(x)q_{2}(x)^{2}-4q_{1}(x)q_{3}(x)q_{4}(x)\notag \\[2mm]
&\leq
\frac{\varepsilon^{2}R^{2}}{2(\gamma-1)}B(x)^{-2}\Big\{
\varepsilon(\varepsilon-1)(\varepsilon-2)-2\varepsilon(1+\gamma R
T_{\infty})\Gamma^{-2}S(x)^{2}\notag \\[2mm]
&\qquad\qquad\qquad\qquad\quad-2[(\gamma^{2}+\gamma)R
T_{\infty}+2]\Gamma^{-2}S(x)^{3} \Big\}+C\beta^{2}B(x)^{-2}
\notag \\[2mm]
&=
C\beta^{2}B(x)^{-2}+\frac{\varepsilon^{2}R^{2}}{2(\gamma-1)}B(x)^{-2}\left\{
\varepsilon(\varepsilon-1)(\varepsilon-2)-12\varepsilon\frac{1+\gamma
R T_{\infty}}{1+\frac{\gamma^{2}+\gamma}{2}R T_{\infty}}S(x)^{2}-24S(x)^{3} \right\}\notag \\[2mm]
&\leq\frac{\varepsilon^{2}R^{2}}{2(\gamma-1)}B(x)^{-2}\left\{
\varepsilon(\varepsilon-1)(\varepsilon-2)-12\varepsilon\frac{2}{\gamma+1}S(x)^{2}-24S(x)^{3}+C\beta^{2} \right\}\notag \\[2mm]
&\leq \frac{\varepsilon^{2}R^{2}}{2(\gamma-1)}B(x)^{-2}\left\{
\varepsilon(\varepsilon-1)(\varepsilon-2)-12(\frac{2}{\gamma+1}\varepsilon+2)+C\beta^{2} \right\}.\label{2.20p1}
\end{align}
Recall \eqref{def.la0} for the definition of $\lambda_{0}$. Since $\la\in [4,\la_0)$ and $\varepsilon\in (0,\la]$, \eqref{2.20} follows from \eqref{2.20p1} by letting $\beta^2$ be small enough. Thus, combining \eqref{2.20} together with $\eqref{2.19}$ and $\eqref{2.20}$, it holds that
\begin{eqnarray*}
 \begin{aligned}[b]
Q(x)\geq c B(x)^{-2}(\varphi^{2}+\psi^{2}+\zeta^{2}),
\end{aligned}
\end{eqnarray*}
 which then implies that
\begin{equation}\label{2.22}
\int_{\mathbb{R}_{+}}\beta W_{\varepsilon-1,\beta}Q(x)dx\geq c\beta\int_{\mathbb{R}_{+}}
W_{\varepsilon-1,\beta} B(x)^{-2}(\varphi^{2}+\psi^{2}+\zeta^{2})dx =c
\beta^{3}\|(\varphi,\psi,\zeta)\|_{\varepsilon-3,\beta}^{2}.
\end{equation}
Therefore, the key estimate \eqref{2.16} follows by substituting $\eqref{2.22}$ into $\eqref{2.18}$ and letting  $\beta^{2}\leq
C\phi_{b}\leq C\delta$ and $\mathcal
{N}_{\lambda,\beta}(M)/\beta^{3}\leq \delta$ for $\delta>0$ small enough.

For the last three terms on the right-hand side of $\eqref{2.14}$, it is direct to obtain
\begin{align}
&C\mathcal
{N}_{\lambda,\beta}(M)\|(\varphi,\psi,\zeta,\sigma)\|^{2}_{\varepsilon-3,\beta}+C\phi_{b}\int_{\mathbb{R}_{+}}\beta
W_{\varepsilon-1,\beta}G(x)^{-2}(\varphi^{2}+\psi^{2}+\zeta^{2}+\sigma^{2})dx\notag\\
&\quad+C(\mathcal{N}_{\lambda,\beta}(M)+\phi_{b})\int_{\mathbb{R}_{+}}
W_{\varepsilon,\beta}G(x)^{-3}(\varphi^{2}+\psi^{2}+\zeta^{2}+\sigma^{2})dx\notag \\
&\leq C\delta \beta^{3}\|(\varphi,\psi,\zeta)\|_{\varepsilon-3,\beta}^{2}.\label{2.23}
\end{align}
 By substituting $\eqref{2.23}$ and $\eqref{2.16}$ into
 $\eqref{2.14}$, we have
\begin{eqnarray}\label{2.24}
 \begin{aligned}[b]
I_{1}+I_{2}\geq
(c-C\delta)(\beta^{3}\|(\varphi,\psi,\zeta)\|_{\varepsilon-3,\beta}^{2}+\beta\|\sigma_{x}\|_{\varepsilon-1,\beta}^{2}).
\end{aligned}
\end{eqnarray}

At the end, we estimate the only term on the right-hand side of
$\eqref{2.7}$. In fact, recalling \eqref{2.6} as well as \eqref{def.r0} and \eqref{def.r1x}, it holds that
\begin{eqnarray}\label{2.25}
 \begin{aligned}[b]
\int_{\mathbb{R}_{+}} W_{\varepsilon,\beta}\CN_{1}dx\leq  C\delta
\left\{\beta^{3}\|(\varphi,\psi,\zeta)\|_{\varepsilon-3,\beta}^{2}+\beta\|(\varphi_{x},\psi_{x},\zeta_{x})\|_{\varepsilon-1,\beta}^{2}\right\},
\end{aligned}
\end{eqnarray}
where we have used Lemma \ref{1.1}(iv), $\eqref{1.16}$, $\eqref{a}$-$\eqref{d}$, $\lambda\geq 4$ and the
Cauchy-Schwarz inequality and the elliptic estimate in Lemma \ref{main.result4}.

 Substituting $\eqref{2.11}$, $\eqref{2.11b}$, $\eqref{2.24}$ and $\eqref{2.25}$
into $\eqref{2.7}$, we have
\begin{align}
&\frac{d}{dt}\int_{\mathbb{R}_{+}} W_{\varepsilon,\beta}(e^{-\tilde{\phi}}\CE_0
+\CE_1^{\rm x}+\frac{1}{2}\tilde{n}^{2}\varphi^{2})dx\notag\\[2mm]
&\quad +c\beta^{3}\|(\varphi,\psi,\zeta)\|_{\varepsilon-3,\beta}^{2}+c\beta\|(\varphi_{x},\psi_{x},\zeta_{x},\sigma_{x})\|_{\varepsilon-1,\beta}^{2}
\leq 0,\label{2.26}
\end{align}
provided that $\delta>0$ is sufficiently small, where $\CE_0$ and $\CE_1^{\rm x}$ are defined in \eqref{def.e0} and \eqref{def.e1x} respectively. Furthermore, multiplying $\eqref{2.26}$ by $(1+\beta \tau)^{\xi}$ and integrating the resulting inequality
over $(0,t)$ give the desired estimate $\eqref{2.10}$.  This hence completes the proof of Lemma \ref{main.result2.9}.
\end{proof}

\begin{lemma}\label{main.result2.erer9}
Under the same conditions as in Proposition \ref{ste.pro1.1}, there exist
positive constants $C$ and $\delta$ independent of $M$ such that if
conditions $\eqref{a}$--$\eqref{d}$ are satisfied, it holds for
any $t\in[0,M]$ and any $\xi\geq 0$ that
 \begin{align}
 &(1+\beta t)^{\xi}\|(\varphi_{t},\psi_{t},\zeta_{t})(t)\|_{\varepsilon,\beta,1}^{2}\notag\\
&\quad +\int_{0}^{t} (1+\beta
 \tau)^{\xi}\left\{\beta^{3}\|(\varphi_{t},\psi_{t},\zeta_{t})\|_{\varepsilon-3,\beta}^{2}
 +\beta\|(\varphi_{tx},\psi_{tx},\zeta_{tx},\sigma_{tx})\|_{\varepsilon-1,\beta}^{2}\right\}d\tau\notag\\
& \leq C\|(\varphi_{0t},\psi_{0t},\zeta_{0t})\|_{\varepsilon,\beta,1}^{2}
 +C\xi\beta\int_{0}^{t} (1+\beta
 \tau)^{\xi-1}\|(\varphi_{t},\psi_{t},\zeta_{t})\|_{\varepsilon,\beta,1}^{2}
 d\tau\notag\\
 &\quad+C\delta\int_{0}^{t}(1+\beta
\tau)^{\xi}\left[
\beta\|(\varphi_{x},\psi_{x},\zeta_{x})\|_{\varepsilon-1,\beta}^{2}+\beta^{3}\|(\varphi,\psi,\zeta,\varphi_{xx},\psi_{xx},\zeta_{xx})\|_{\varepsilon-3,\beta}^{2}\right]d\tau.
\label{2.35}
 \end{align}
\end{lemma}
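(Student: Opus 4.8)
The plan is to mimic the energy estimate for \eqref{2.10} in Lemma \ref{main.result2.9}, but now applied to the time-differentiated system. First I would apply $\partial_t$ to the rewritten system \eqref{2.0}. Because the coefficient matrices in \eqref{2.0} depend only on $(\tilde n,\tilde u,\tilde T)$ through $u,T$ which in turn carry time-dependent perturbations $\psi,\zeta$, differentiating in $t$ produces the same principal structure as \eqref{2.0} with $(\varphi,\psi,\zeta)$ replaced by $(\varphi_t,\psi_t,\zeta_t)$, plus lower-order commutator terms of the schematic form $(\psi_t,\zeta_t)\cdot(\text{first derivatives of }(\varphi,\psi,\zeta))$ and $(\psi_t,\zeta_t)\cdot(\text{stationary-solution derivatives})$. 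I would also differentiate the Poisson relation \eqref{1.17} in $t$, getting $\sigma_{txx}=e^{\varphi+\tilde v}\varphi_t+e^{-(\sigma+\tilde\phi)}\sigma_t$, so that the same Taylor-expansion trick as in \eqref{2.3}--\eqref{2.4} applies with an extra contribution that is quadratically small.

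Next I would take the inner product of the $\partial_t$-system with $\tilde n(\varphi_t,\psi_t,\zeta_t)$, take one more $x$-derivative and pair with $\tilde n(\varphi_{tx},\psi_{tx},\zeta_{tx})$, combine the two as in \eqref{2.1}$\times e^{-\tilde\phi}+$\eqref{2.4}, multiply by the space weight $W_{\varepsilon,\beta}$, and integrate over $\mathbb{R}_+$. This reproduces, verbatim in structure, the left-hand side of \eqref{2.7} with all quantities carrying an extra $\partial_t$: a time-derivative of the weighted energy $e^{-\tilde\phi}\CE_0+\CE_1^{\rm x}+\frac12\tilde n^2\varphi^2$ evaluated on $(\varphi_t,\psi_t,\zeta_t)$, the analogue $I_1^{\rm t}+I_2^{\rm t}$ of $I_1+I_2$, the analogue of the $-\varepsilon\beta W_{\varepsilon-1,\beta}\CH_1^{\rm x}$ term, and boundary terms at $x=0$. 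The point is that the coercivity already proved — inequalities \eqref{2.11a}, \eqref{2.11}, \eqref{2.11b}, and the key quadratic-form estimate \eqref{2.16} via \eqref{2.19}--\eqref{2.20} — depends only on the properties of the stationary solution (Lemma \ref{1.1}(iv)) and the smallness parameters, not on which unknown it is applied to; hence the same bounds hold with $(\varphi_t,\psi_t,\zeta_t,\sigma_t)$ in place of $(\varphi,\psi,\zeta,\sigma)$, provided the boundary condition $\sigma(t,0)=0$ is differentiated to give $\sigma_t(t,0)=0$. This yields an inequality of the type \eqref{2.26} for the time-differentiated quantities, modulo the extra commutator terms collected in the right-hand side.

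The remaining and main task is to bound those commutator terms. They are of two kinds: products like $\psi_t\,\partial_x^j(\varphi,\psi,\zeta)$ and $\psi_{tx}\,\partial_x^j(\varphi,\psi,\zeta)$ with $j\le 1$, multiplied by $\tilde n$ or by bounded functions of the stationary solution, and products involving $\tilde n_x,\tilde u_x,\tilde T_x,\tilde n_{xx},\dots$ times $(\psi_t,\zeta_t)$ times $(\varphi,\psi,\zeta)$-type factors. Using the original equation \eqref{1.16} one converts $(\varphi_t,\psi_t,\zeta_t)$ into spatial derivatives plus stationary-solution terms, which is why the right-hand side of \eqref{2.35} must contain $\beta\|(\varphi_x,\psi_x,\zeta_x)\|_{\varepsilon-1,\beta}^2$ and $\beta^3\|(\varphi,\psi,\zeta,\varphi_{xx},\psi_{xx},\zeta_{xx})\|_{\varepsilon-3,\beta}^2$; the second-order spatial derivatives enter when one $x$-derivative hits a first-order term already carrying a $t$-derivative that has been replaced via \eqref{1.16}. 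Each such product is estimated by Cauchy--Schwarz, absorbing the genuinely cubic pieces using $\mathcal N_{\lambda,\beta}(M)/\beta^3\le\delta$ into the good dissipation on the left, and leaving the terms that are linear in an extra weight power or stationary-solution decay factor as the $C\delta\,(\cdots)$ term on the right of \eqref{2.35}. The elliptic estimate of Lemma \ref{main.result4} controls $\|\sigma_{tx}\|$ and $\|\sigma_{txx}\|$ in terms of $\|\varphi_t\|$ and $\|\sigma_t\|$ with the appropriate weights. Finally, multiplying the resulting differential inequality by $(1+\beta\tau)^\xi$, integrating over $(0,t)$, and handling the $\xi$-term exactly as at the end of the proof of Lemma \ref{main.result2.9} produces \eqref{2.35}. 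The hard part is the bookkeeping of the commutator terms: one must check that every product really does split into a cubic remainder (absorbable by smallness) plus a term carrying at least one factor of $\beta$ or one factor of the stationary-solution decay $G^{-1}$ (hence matching the weighted-norm combinations allowed on the right of \eqref{2.35}), with no leftover term of the same strength as the dissipation.
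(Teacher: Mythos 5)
Your proposal follows essentially the same route as the paper: apply $\partial_t$ and then $\partial_x\partial_t$ to the symmetrized system \eqref{2.0}, pair against $\tilde n(\varphi_t,\psi_t,\zeta_t)$ and $\tilde n(\varphi_{xt},\psi_{xt},\zeta_{xt})$, close the $\sigma$-terms via the time-differentiated Taylor expansion of the Poisson equation, multiply by $W_{\varepsilon,\beta}$ and integrate, then reuse verbatim the coercivity machinery developed for $I_1+I_2$ to control the analogues $I_3+I_4$, and finally absorb the commutator terms via \eqref{d} --- precisely the content of \eqref{2.27}--\eqref{2.52}. One small imprecision: in the paper the $\partial_{xx}$-terms on the right of \eqref{2.35} do not arise from converting $\partial_t$ to $\partial_x$ via \eqref{1.16}, but appear directly in the commutator $\CR_{2,2}^{\rm xt}$ when $\partial_x\partial_t$ hits the variable coefficients of \eqref{2.0}; this does not affect the argument.
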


\begin{proof}
We follow the same steps as in deriving $\eqref{2.10}$ in the proof of Lemma \ref{main.result2.9}. On one hand, taking the time derivative on $\eqref{2.0}$, then taking the inner product of the resulting system with $\tilde{n}(\varphi_{t},\psi_{t},\zeta_{t})$ and using $\tilde{v}_{x}=\frac{\tilde{n}_{x}}{\tilde{n}}$, it follows that
\begin{equation}\label{2.27}
(\CE_1^{\rm t})_t+(\CH_1^{\rm t})_{x}+\CD_1^{\rm t}+\tilde{n}\psi_{xt}\sigma_{t}=\CR_1^{\rm t},
\end{equation}
where we have denoted
\begin{equation*}
\CE_1^{\rm t}=\frac{\tilde{n}}{2}RT\varphi_{t}^{2}+\frac{\tilde{n}}{2}m\psi_{t}^{2}+\frac{\tilde{n}R}{2(\gamma-1)T}\zeta_{t}^{2},
\end{equation*}
\begin{equation*}
\CH_1^{\rm t}=\frac{\tilde{n}}{2}RTu\varphi_{t}^{2}+\tilde{n}RT\varphi_{t}\psi_{t}+\frac{\tilde{n}}{2}mu\psi_{t}^{2}+R\tilde{n}\zeta_{t}\psi_{t}+\frac{\tilde{n}R
u}{2(\gamma-1)T}\zeta_{t}^{2}
-\tilde{n}\sigma_{t}\psi_{t},
\end{equation*}
\begin{eqnarray*}
\CD_1^{\rm t}&=&\left(-\frac{R T
u}{2}\tilde{n}_{x}-\frac{R \tilde{n}u}{2}\tilde{T}_{x}-\frac{R
\tilde{n}T}{2}\tilde{u}_{x}\right)\varphi_{t}^{2}-\tilde{n}R\tilde{T}_{x}\varphi_{t}\psi_{t}\\[2mm]
&&+\left(\frac{m\tilde{n}}{2}\tilde{u}_{x}-\frac{mu}{2}\tilde{n}_{x}\right)\psi_{t}^{2}
+\frac{R\tilde{n}}{(\gamma-1)T}\tilde{T}_{x}\zeta_{t}\psi_{t}+\tilde{n}_{x}\sigma_{t}\psi_{t}\\[2mm]
&&+\left[\frac{R\tilde{n}}{T}\tilde{u}_{x}
-\frac{Ru\tilde{n}_{x}+R\tilde{n}\tilde{u}_{x}}{2(\gamma-1)T}+\frac{R
u\tilde{n} \tilde{T}_{x}}{2(\gamma-1)T^{2}}\right]\zeta_{t}^{2},
\end{eqnarray*}
and
\begin{align*}
\CR_1^{\rm t}&=\left(-\frac{\tilde{n}R}{2}\zeta_{t}+\frac{\tilde{n}R
u}{2}\zeta_{x}+\frac{\tilde{n}R T}{2}\psi_{x}\right)\varphi_{t}^{2}
+R\tilde{n}\zeta_{x}\varphi_{t}\psi_{t}-\frac{m\tilde{n}}{2}\psi_{x}\psi_{t}^{2}\\[3mm]
&\quad-(\tilde{n}Ru\zeta_{t}+\tilde{n}R
T\psi_{t})\varphi_{x}\varphi_{t}-\frac{R \tilde{n} \psi_{t}\zeta_{x}\zeta_{t}}{(\gamma-1)T}\\[3mm]
&\quad
+R\tilde{n}\left(\frac{\psi_{x}}{2(\gamma-1)T}+\frac{\zeta_{t}+u\zeta_{x}}{2(\gamma-1)T^{2}}
+\frac{\tilde{u}_{x}\zeta}{T^{2}}
+\frac{\tilde{T}_{x}\psi}{(\gamma-1)T^{2}}\right)\zeta_{t}^{2}\\[3mm]
&\quad-\tilde{n}R\zeta_{t}(\psi_{x}\varphi_{t}+\psi_{t}\varphi_{x})-R\tilde{n}_{x}\psi\zeta_{t}\varphi_{t}.
\end{align*}
On the other hand, taking derivatives with respect to $x$ and $t$ about $\eqref{2.0}$, then taking the inner product of the resulting system with $\tilde{n}(\varphi_{xt},\psi_{xt},\zeta_{xt})$ and using $\tilde{v}_{x}=\frac{\tilde{n}_{x}}{\tilde{n}}$ again, it also follows that
\begin{equation}\label{2.28}
(\CE_2^{\rm xt})_t+(\CH_2^{\rm xt})_x
-\tilde{n}\psi_{xt}\sigma_{xxt}=\CR_2^{\rm xt},
\end{equation}
where we have denoted
\begin{equation*}
\CE_2^{\rm xt}=\frac{\tilde{n}}{2}RT\varphi_{xt}^{2}+\frac{\tilde{n}}{2}m\psi_{xt}^{2}+\frac{\tilde{n}R}{2(\gamma-1)T}\zeta_{xt}^{2},
\end{equation*}
\begin{equation*}
\CH_2^{\rm xt}=\frac{\tilde{n}}{2}RTu\varphi_{xt}^{2}+\tilde{n}RT\varphi_{xt}\psi_{xt}+\frac{\tilde{n}}{2}mu\psi_{xt}^{2}+R\tilde{n}\zeta_{xt}\psi_{xt}+\frac{\tilde{n}R
u}{2(\gamma-1)T}\zeta_{xt}^{2},
\end{equation*}
and
\begin{equation*}
\CR_2^{\rm xt}=\CR_{2,1}^{\rm xt}+\CR_{2,2}^{\rm xt}
\end{equation*}
with
\begin{align}
\CR_{2,1}^{\rm xt}
&=
\left[-\frac{\tilde{n}R}{2}\zeta_{t}+ \frac{R T
u}{2}\tilde{n}_{x}-\frac{R \tilde{n}}{2}(T
u)_{x}\right]\varphi_{xt}^{2} +\left[\frac{m
u}{2}\tilde{n}_{x}-\frac{m\tilde{n}}{2}u_{x}-m\tilde{n} \tilde{u}
_{x} \right]\psi_{xt}^{2}\notag \\[3mm]
&\quad+\left[ \frac{R
u}{2(\gamma-1)T}\tilde{n}_{x}+\frac{R
\tilde{n}u}{2(\gamma-1)T^{2}}T_{x}+\frac{\tilde{n}R}{2(\gamma-1)T^{2}}\zeta_{t}-\frac{R
\tilde{n}}{2(\gamma-1)T}u_{x}\right]\zeta_{xt}^{2}\notag\\[3mm]
&\quad
 -R\tilde{n}T_{x}(\varphi_{tt}+\psi
_{xt})\varphi_{xt}+ RT\tilde{n}_{x}\psi
_{xt}\varphi_{xt}+\frac{R\tilde{n}T_{x}}{(\gamma-1)T^{2}}\zeta_{tt}\zeta_{xt}\notag\\[3mm]
&\quad-\tilde{n}\tilde{u}_{xx}\left[m\psi_{t}\psi_{xt}+\left(\frac{R\zeta}{T}\right)_{t}\zeta_{xt}\right]
-R\tilde{n}\varphi_{xt}\varphi_{t}\zeta_{xt}
-\frac{R}{\gamma-1}\left(\frac{\tilde{n}}{T}\right)_{xt}\zeta_{xt}\zeta_{t}\notag
\end{align}
and
\begin{align*}
\CR_{2,2}^{\rm xt}&=
-\tilde{n}(RTu)_{xt}\varphi_{xt}\varphi_{x}
-\tilde{n}m\psi_{x}\psi_{xt}^{2}-\frac{R\tilde{n}}{\gamma-1}\left(\frac{u}{T}\right)_{xt}\zeta_{xt}\zeta_{x}
-R\tilde{n}(\varphi_{xt}\psi_{x}+\psi_{xt}\varphi_{x})\zeta_{xt}\notag\\[2mm]
&\quad-\tilde{n}(RTu)_{t}\varphi_{xt}\varphi_{xx}
-R\tilde{n}(\varphi_{xt}\psi_{xx}+\psi_{xt}\varphi_{xx})\zeta_{t}-\tilde{n}m\psi_{t}\psi_{xt}\psi_{xx}\notag\\[2mm]
&\quad
-\frac{R\tilde{n}}{\gamma-1}\left(\frac{u}{T}\right)_{t}\zeta_{xt}\zeta_{xx}-\tilde{n}_{x}(RT\psi)_{xt}\varphi_{xt}
-\tilde{n}\tilde{u}_{x}\left(\frac{R\zeta}{T}\right)_{xt}\zeta_{xt}-\frac{R\tilde{n}}{\gamma-1}\tilde{T}_{x}
\left(\frac{\psi}{T}\right)_{xt}\zeta_{xt}\notag\\[2mm]
&\quad
-\tilde{n}\tilde{v}_{xx}\left[(RT\psi)_{t}\varphi_{xt}+R\zeta_{t}\psi_{xt}\right]-\frac{R\tilde{n}}{\gamma-1}\tilde{T}_{xx}\left(\frac{\psi}{T}\right)_{t}\zeta_{xt}.
\end{align*}

In terms of the identities \eqref{2.27} and \eqref{2.28} as above, we are going to take their combination by the sum of \eqref{2.27} multiplied by $e^{-\tilde{\phi}}$ together with \eqref{2.28}. Notice that
\begin{equation}
\label{2.29p1}
\tilde{n}\psi_{xt}\sigma_{t}\cdot e^{-\tilde{\phi}}-\tilde{n}\psi_{xt}\sigma_{xxt}=\tilde{n}\left(e^{-\tilde{\phi}}\sigma_{t}-\sigma_{xxt}\right)\psi_{xt}.
\end{equation}
Taking the time derivative on $\eqref{2.3}$, one has
\begin{eqnarray}\label{2.29}
 \begin{aligned}
e^{-\tilde{\phi}}\sigma_{t}-\sigma_{xxt}=-\tilde{n}\varphi_{t}-\frac{1}{2}\left(\tilde{n}e^{\theta_{1}\varphi}\varphi^{2}-e^{-(\theta_{2}\sigma+\tilde{\phi})}\sigma^{2}\right)_{t},\
\ \  \theta_{1},\theta_{2}\in(0,1).
\end{aligned}
\end{eqnarray}
Substituting $\eqref{2.29}$ into \eqref{2.29p1} and
using the first component equation of \eqref{1.16}, one gets
\begin{align}
&\tilde{n}\psi_{xt}\sigma_{t}\cdot e^{-\tilde{\phi}}-\tilde{n}\psi_{xt}\sigma_{xxt}\notag \\[2mm]
&=\left(\frac{1}{2}\tilde{n}^{2}\varphi_{t}^{2}\right)_{t}
+\left(\frac{1}{2}\tilde{n}^{2}u\varphi_{t}^{2}\right)_{x}
-\frac{1}{2}\tilde{n}^{2}\tilde{u}_{x}\varphi_{t}^{2}
-\tilde{n}u\tilde{n}_{x}\varphi_{t}^{2}
+\tilde{n}^{2}\tilde{v}_{x}\varphi_{t}\psi_{t}
-\frac{1}{2}\tilde{n}^{2}\psi_{x}\varphi_{t}^{2}\notag\\[2mm]
&\quad-\frac{\tilde{n}}{2}\left(\tilde{n}e^{\theta_{1}\varphi}\varphi^{2}
-e^{-(\theta_{2}\sigma+\tilde{\phi})}\sigma^{2}\right)_{t}\psi_{xt}
+\tilde{n}^{2}\psi_{t}\varphi_{x}\varphi_{t}.
\label{2.30}
\end{align}
Then, with the help of $\eqref{2.30}$, multiplying $\eqref{2.27}$ by $e^{-\tilde{\phi}}$ and adding the
resulting equation to $\eqref{2.28}$ give that
\begin{eqnarray}
&\dis \left(e^{-\tilde{\phi}}\CE_1^{\rm t}+\CE_2^{\rm xt}+\frac{1}{2}\tilde{n}^{2}\varphi_{t}^{2}\right)_{t}
+\left(e^{-\tilde{\phi}}\CH_1^{\rm t}+\CH_2^{\rm xt}
+\frac{1}{2}\tilde{n}^{2}u\varphi_{t}^{2}\right)_{x}
+e^{-\tilde{\phi}}\tilde{\phi}_{x}\CH_1^{\rm t}+e^{-\tilde{\phi}}\CD_1^{\rm t}\notag \\[2mm]
&\dis +(-\tilde{n}u\tilde{n}_{x}-\frac{1}{2}\tilde{n}^{2}\tilde{u}_{x}
)\varphi_{t}^{2}
+\tilde{n}^{2}\tilde{v}_{x}\varphi_{t}\psi_{t}=\CN_{2},
\label{2.31}
\end{eqnarray}
where
\begin{equation*}
\CN_2 =e^{-\tilde{\phi}}\CR_1^{\rm t}+\CR_2^{\rm xt}
+\frac{1}{2}\tilde{n}^{2}\psi_{x}\varphi_{t}^{2}
+\frac{\tilde{n}}{2}\left(\tilde{n}e^{\theta_{1}\varphi}\varphi^{2}
-e^{-(\theta_{2}\sigma+\tilde{\phi})}\sigma^{2}\right)_{t}\psi_{xt}
-\tilde{n}^{2}\psi_{t}\varphi_{x}\varphi_{t}.
\end{equation*}
Similarly for deriving \eqref{2.7}, it follows from \eqref{2.31} that
\begin{align}
&\frac{d}{dt}\int_{\mathbb{R}_{+}} W_{\varepsilon,\beta}
\left[e^{-\tilde{\phi}}\CE_1^{\rm t}+\CE_2^{\rm xt}+\frac{1}{2}\tilde{n}^{2}\varphi_{t}^{2}\right]dx\notag\\[3mm]
&\quad+\underbrace{\int_{\mathbb{R}_{+}}\varepsilon\beta W_{\varepsilon-1,\beta}\CL_3dx}_{I_{3}}
+\int_{\mathbb{R}_{+}}\varepsilon\beta
W_{\varepsilon-1,\beta} (-\CH_2^{\rm xt})
dx+\underbrace{\int_{\mathbb{R}_{+}} W_{\varepsilon,\beta}\CL_{4} dx}_{I_{4}}\notag\\[3mm]
&\quad
-[e^{-\tilde{\phi}}\CH_1^{\rm t}+\CH_2^{\rm xt}
+\frac{1}{2}\tilde{n}^{2}u\varphi_{t}^{2}](t,0)
=\int_{\mathbb{R}_{+}} W_{\varepsilon,\beta}\CN_{2} dx,
\label{2.33}
\end{align}
where we have denoted
\begin{equation*}
\CL_3=
-e^{-\tilde{\phi}}\CH_1^{\rm t}-\frac{1}{2}\tilde{n}^{2}u\varphi_{t}^{2}
\end{equation*}
and
 \begin{align*}
\CL_{4}=&e^{-\tilde{\phi}}\tilde{\phi}_x\CH_1^t +e^{-\tilde{\phi}}\CD_1^t+\left(-\tilde{n}u\tilde{n}_{x}-\frac{1}{2}\tilde{n}^{2}\tilde{u}_{x}
\right)\varphi_{t}^{2}
+\tilde{n}^{2}\tilde{v}_{x}\varphi_{t}\psi_{t}.
\end{align*}

 We now make estimates on each term in \eqref{2.33} in the same way as for treating \eqref{2.7}. First, with the help of $u_{\infty}<0$ as well as smallness of $\delta>0$, we notice that
 \begin{align}
\frac{-\CH_2^{\rm xt}}{\tilde{n}}= & -\frac{1}{2}RTu\varphi_{xt}^{2}-RT\varphi_{xt}\psi_{xt}-\frac{1}{2}mu\psi_{xt}^{2}-R\zeta_{xt}\psi_{xt}-\frac{R
u}{2(\gamma-1)T}\zeta_{xt}^{2}\notag\\[2mm]
\geq & \frac{1}{2}RT_{\infty}(-u_{\infty})\varphi_{xt}^{2}-RT_{\infty}\varphi_{xt}\psi_{xt}+\frac{1}{2}m(-u_{\infty})\psi_{xt}^{2}
-R\zeta_{xt}\psi_{xt}+\frac{R
(-u_{\infty})}{2(\gamma-1)T_{\infty}}\zeta_{xt}^{2}\notag\\[2mm]
&-C(\mathcal
{N}_{\lambda,\beta}(M)+\phi_{b})(\varphi_{xt}^{2}+\psi_{xt}^{2}+\zeta_{xt}^{2})\notag\\[2mm]
\geq & (c-C\delta\beta)(\varphi_{xt}^{2}+\psi_{xt}^{2}+\zeta_{xt}^{2}), \notag
\end{align}
so it holds that
\begin{equation}
\label{2.37a}
\int_{\mathbb{R}_{+}}\varepsilon\beta
W_{\varepsilon-1,\beta} (-\CH_2^{\rm xt})
dx\geq
c\beta\|(\varphi_{xt},\psi_{xt},\zeta_{xt})\|_{\varepsilon-1,\beta}^{2}.
\end{equation}
Similarly, for the boundary terms, one has
\begin{equation}
\label{2.37}
-\left[e^{-\tilde{\phi}}\CH_1^{\rm t}
+\frac{1}{2}\tilde{n}^{2}u\varphi_{t}^{2}\right](t,0)\geq 0,\quad -\CH_2^{\rm xt}(t,0)\geq 0.
\end{equation}
The estimates of $I_{3}$ and $I_{4}$ are similar to those of
$I_{1}$ and $I_{2}$. In fact, we only need to use $(\varphi_{t},\psi_{t},\zeta_{t},\sigma_{t})$
to replace $(\varphi,\psi,\zeta,\sigma)$ in all the estimates on $I_{1}$ and $I_{2}$. Thus, as for obtaining \eqref{2.24}, it holds that
\begin{equation}
I_{3}+I_{4}\geq
(c-C\delta)\left\{\beta^{3}\|(\varphi_{t},\psi_{t},\zeta_{t})\|_{\varepsilon-3,\beta}^{2}+\beta\|\sigma_{xt}\|_{\varepsilon-1,\beta}^{2}\right\},\label{2.50}
\end{equation}
where $c>0$ is a constant depending on $\varepsilon$ but independent of $\beta$. For the term on the right-hand side of \eqref{2.33}, similar to \eqref{2.25}, direct computations give that
\begin{eqnarray}
\int_{\mathbb{R}_{+}} W_{\varepsilon,\beta}\CN_{2} dx & \leq & C\delta
\beta\|(\varphi_{x},\psi_{x},\zeta_{x},\varphi_{tx},\psi_{tx},\zeta_{tx})\|_{\varepsilon-1,\beta}^{2}
\notag\\
&&+C\delta\beta^{3}\|(\varphi,\psi,\zeta,\varphi_{t},\psi_{t},\zeta_{t},\varphi_{xx},\psi_{xx},\zeta_{xx})\|_{\varepsilon-3,\beta}^{2}.
\label{2.51}
\end{eqnarray}

Now, substituting all the estimates \eqref{2.37a}, \eqref{2.37}, \eqref{2.50} and \eqref{2.51}
into $\eqref{2.33}$, one can derive that
\begin{align}
&\frac{d}{dt}\int_{\mathbb{R}_{+}} W_{\varepsilon,\beta}
\left[e^{-\tilde{\phi}}\CE_1^{\rm t}+\CE_2^{\rm xt}+\frac{1}{2}\tilde{n}^{2}\varphi_{t}^{2}\right]dx\notag\\[2mm]
&\quad
+c\beta^{3}\|(\varphi_{t},\psi_{t},\zeta_{t})\|_{\varepsilon-3,\beta}^{2}
+c\beta\|(\varphi_{tx},\psi_{tx},\zeta_{tx},\sigma_{tx})\|_{\varepsilon-1,\beta}^{2}\notag\\[2mm]
&\leq  C\delta
\beta\|(\varphi_{x},\psi_{x},\zeta_{x})\|_{\varepsilon-1,\beta}^{2}+C\delta\beta^{3}\|(\varphi,\psi,\zeta,\varphi_{xx},\psi_{xx},\zeta_{xx})\|_{\varepsilon-3,\beta}^{2},
\label{2.52}
\end{align}
provided that $\delta>0$ is sufficiently small. Furthermore, the desired estimate \eqref{2.35} follows by multiplying \eqref{2.52} by $(1+\beta \tau)^{\xi}$ and integrating the resulting inequality
over $(0,t)$. Therefore, we complete the proof of Lemma \ref{main.result2.erer9}.
\end{proof}

\begin{proof}[Proof of Proposition \ref{ste.pro1.1}]
Now, following Lemma \ref{main.result2.9} and Lemma \ref{main.result2.erer9} above, we are ready to
prove Proposition \ref{ste.pro1.1}. In fact, by adding two key estimates \eqref{2.35} and \eqref{2.10} together, applying Lemma \ref{main.result4.3} and Lemma \ref{main.result4.4},
and taking $\delta>0$ sufficiently small, one concludes that
\begin{align}
 &(1+\beta t)^{\xi}\|(\varphi,\psi,\zeta)(t)\|_{\varepsilon,\beta,2}^{2}
+\beta^{3}\int_{0}^{t}(1+\beta
\tau)^{\xi}\|(\varphi,\psi,\zeta)(\tau)\|_{\varepsilon-3,\beta,2}^{2}d\tau\notag\\[2mm]
&\qquad\leq C\|(\varphi_{0},\psi_{0},\zeta_{0})\|_{\varepsilon,\beta,2}^{2}
+C\xi\beta\int_{0}^{t} (1+\beta
\tau)^{\xi-1}\|(\varphi,\psi,\zeta)(\tau)\|_{\varepsilon,\beta,2}^{2}d\tau.
\label{2.54}
\end{align}
In terms of \eqref{2.54}, applying an induction argument similar as \cite{SKawashima} and \cite{MNishikawa} with the choice of
  $\xi=(\lambda-\varepsilon)/3+\kappa$ for an arbitrary positive constant $\kappa$
and combining the elliptic estimates in Lemma \ref{main.result4} yield that
 \begin{align}
 &(1+\beta
 t)^{(\lambda-\varepsilon)/3+\kappa}(\|(\varphi,\psi,\zeta)(\tau)\|_{\varepsilon,\beta,2}^{2}+\|\sigma(\tau)\|_{\varepsilon,\beta,4}^{2})\notag\\[2mm]
 &
\quad+\beta^{3}\int_{0}^{t} (1+\beta
\tau)^{(\lambda-\varepsilon)/3+\kappa}
(\|(\varphi,\psi,\zeta)(\tau)\|_{\varepsilon-3,\beta,2}^{2}+\|\sigma(\tau)\|_{\varepsilon-3,\beta,4}^{2})d\tau
\notag\\[2mm]
&\leq C(1+\beta
 t)^{\kappa}\|(\varphi_{0},\psi_{0},\zeta_{0})\|_{\lambda,\beta,2}^{2},\notag
\end{align}
which proves the desired estimate \eqref{prop2.1r1} under the conditions \eqref{a}, \eqref{b}, \eqref{c} and \eqref{d}.
Then this completes the proof of Proposition \ref{ste.pro1.1}.
\end{proof}

\section{Energy estimates for the nondegenerate case}
In this section, we study the asymptotic stability of the
stationary solution to $\eqref{1.1}$ for the nondegenerate case
$\eqref{1.15a}$. As in the previous section, we shall only focus on the proof of the {\it a priori} estimates that will be given in Proposition \ref{ste.pro2}. The proof of the local-in-existence as well as the continuation argument is omitted for brevity.

\begin{proposition}\label{ste.pro2}
Let the same conditions on $T_{\infty}$, $u_{\infty}$ and
$\lambda$ as in Theorem \ref{1.2theorem} hold.

\medskip
(i) Let $(\varphi,\psi,\zeta,\sigma)$ be a solution to
$\eqref{1.16}$--$\eqref{1.19}$ which satisfies
$$
(e^{\lambda
x/2}\varphi,e^{\lambda x/2}\psi,e^{\lambda x/2}\zeta,e^{\lambda
x/2}\sigma)\in (\mathscr{X}_{2}([0,M]))^{3} \times
\mathscr{X}_{2}^2([0,M])
$$
for $M>0$. Then, there exist  constants $\delta>0$ and $C>0$ independent
of $M$ such that if the following conditions
\begin{center}
$\alpha>0$, $\beta\in(0,\lambda]$, and
$\beta+(|\phi_{b}|+\mathcal {N}_{\lambda}(M)+\alpha)/\beta \leq
\delta$
\end{center}
are satisfied, where
\begin{eqnarray*}
 \begin{aligned}[b]
 \mathcal {N}_{\lambda}(M):=\sup_{0\leq t\leq
M}\|(e^{\lambda x/2}\varphi,e^{\beta x/2}\psi,e^{\beta x/2}\zeta)(t)\|_{H^{2}},
\end{aligned}
\end{eqnarray*}
then it holds for any $t\in[0,M]$ that
\begin{align}
\|(e^{\beta x/2}\varphi,e^{\beta x/2}\psi,e^{\beta x/2}\zeta)(t)\|_{H^{2}}^{2}&+\|e^{\beta
x/2}\sigma(t)\|_{H^{4}}^{2}\notag\\[2mm]
&\leq C \|(e^{\lambda
x/2}\varphi_{0},e^{\lambda
x/2}\psi_{0},e^{\lambda
x/2}\zeta_{0})\|_{H^{2}}^{2}e^{-\alpha t}.\label{3.1}
\end{align}

\medskip
(ii) Let $(\varphi,\psi,\zeta,\sigma)$ be a solution to
$\eqref{1.16}$--$\eqref{1.19}$ over $[0,M]$ for $M>0$. Then, for any $\varepsilon\in(0,\lambda]$,
there exist constants $\delta>0$ and $C>0$ independent of $M$ such that if all the following conditions
\begin{equation*}
((1+\beta x)^{\lambda/2}\varphi,(1+\beta x)^{\lambda/2}\psi,(1+\beta
x)^{\lambda/2}\zeta,(1+\beta x)^{\lambda/2}\sigma)\in
(\mathscr{X}_{2}([0,M]))^{3} \times
\mathscr{X}_{2}^2([0,M])
\end{equation*}
and
\begin{eqnarray}\label{3.2b}
 \begin{aligned}[b]
\beta+(|\phi_{b}|+\mathcal {N}_{\lambda,\beta}(M))/\beta \leq
\delta,\quad \beta>0
\end{aligned}
\end{eqnarray}
are satisfied, then it holds for any $t\in[0,M]$ that
\begin{eqnarray}\label{3.3}
 \begin{aligned}[b]
\|(\varphi,\psi,\zeta)(t)\|_{\varepsilon,\beta,2}^{2}+\|\sigma(t)\|_{\varepsilon,\beta,4}^{2}\leq
C \|(\varphi_{0},\psi_{0},\zeta_{0})\|_{\lambda,\beta,2}^{2}(1+\beta
t)^{-(\lambda-\varepsilon)}.
\end{aligned}
\end{eqnarray}
\end{proposition}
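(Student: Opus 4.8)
The plan is to repeat the weighted energy scheme of Section~2, the decisive simplification being that in the nondegenerate regime~\eqref{1.15a} the stationary profile decays \emph{exponentially}: by~\eqref{1.14}, every expression built from $\tilde n-1$, $\tilde u-u_\infty$, $\tilde T-T_\infty$, $\tilde\phi$ and their derivatives is bounded pointwise by $C|\phi_b|e^{-cx}$, hence harmless. Since the algebraic identities~\eqref{2.0}--\eqref{2.5} use only the structure of~\eqref{1.16}--\eqref{1.17}, I would reuse them verbatim. Multiplying~\eqref{2.5} by the weight $W=e^{\beta x}$ in case~(i), and by $W=W_{\varepsilon,\beta}=(1+\beta x)^{\varepsilon}$ in case~(ii), and integrating over $\mathbb{R}_+$, produces the analogue of~\eqref{2.7}: the time derivative of the weighted energy $\int W(e^{-\tilde\phi}\CE_0+\CE_1^{\rm x}+\tfrac12\tilde n^2\varphi^2)\,dx$, a first-order dissipative term $\int W'(-\CH_1^{\rm x})\,dx$, the flux term $I_1=\int W'(-e^{-\tilde\phi}\CH_0-\tfrac12\tilde n^2u\varphi^2)\,dx$, a remainder $I_2$ whose coefficients are $O(|\phi_b|e^{-cx})$, and boundary terms at $x=0$; here $W'=\beta W$ in case~(i) and $W'=\varepsilon\beta W_{\varepsilon-1,\beta}$ in case~(ii).

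The positive definiteness of $-\CH_1^{\rm x}$ (first-order dissipation) and the nonnegativity of the $x=0$ boundary terms follow from $u_\infty<0$ and $\sigma(t,0)=0$ exactly as in~\eqref{2.10a}, \eqref{2.11a} and~\eqref{2.11},\eqref{2.11b}. The crux is the zeroth-order dissipation hidden in $I_1$: using the far-field limits and~\eqref{1.14}, $I_1$ equals $\int W'$ times the \emph{constant-coefficient} quadratic form
\[
-\tfrac12(RT_\infty+1)u_\infty\varphi^2-RT_\infty\varphi\psi-\tfrac12 mu_\infty\psi^2-R\psi\zeta-\tfrac{Ru_\infty}{2(\gamma-1)T_\infty}\zeta^2+\sigma\psi
\]
up to errors controlled by $C(\mathcal N_{\lambda,\beta}(M)+|\phi_b|)$ times the dissipation. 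To dominate the indefinite term $\sigma\psi$ I would multiply~\eqref{1.17} by $-\sigma W'$ and integrate, producing $c\beta\|\sigma_x\|^2$ in the appropriate weighted norm and trading $\sigma^2$ for $\varphi^2$ up to $O(\beta^2)$ terms, just as in~\eqref{2.18}--\eqref{2.22}; a Cauchy--Schwarz on $\sigma\psi$ then leaves a constant-coefficient quadratic form $Q$ in $(\varphi,\psi,\zeta)$. The key linear-algebra step (the analogue of Lemma~\ref{main.result2.987}) is that $Q\ge c(\varphi^2+\psi^2+\zeta^2)$ with $c>0$, valid precisely because $mu_\infty^2>\gamma RT_\infty+1$ \emph{strictly} in~\eqref{1.15a}; unlike the degenerate case there is no leading-order cancellation, so $Q$ is bounded below by a positive constant rather than by $B(x)^{-2}$, and the resulting zeroth-order dissipation $c\beta\|(\varphi,\psi,\zeta)\|^2$ carries no index shift.

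With this in hand, $I_2$ and the nonlinear right-hand side $\int W\CN_1\,dx$ are absorbed into the dissipation for $\delta$ small --- using~\eqref{3.2b} (resp.\ the smallness hypothesis of~(i)), and in case~(i) also that $e^{\beta x}\partial_x^i(\tilde n-1)$, etc., are bounded since one may shrink $\lambda$ so that $\beta\le\lambda$ lies below the exponential rate $c$ of~\eqref{1.14} --- exactly as in~\eqref{2.25},\eqref{2.26}. One thus obtains, in case~(ii), the analogue of~\eqref{2.26} with zeroth-order dissipation $c\beta\|(\varphi,\psi,\zeta)\|_{\varepsilon-1,\beta,1}^2$ and first-order dissipation $c\beta\|(\varphi_x,\psi_x,\zeta_x,\sigma_x)\|_{\varepsilon-1,\beta}^2$, and in case~(i) the same with exponential weights. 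I would then run the identical argument with $(\varphi_t,\psi_t,\zeta_t)$ in place of $(\varphi,\psi,\zeta)$ to get the analogue of Lemma~\ref{main.result2.erer9}, add the two estimates, use the elliptic estimates of Lemma~\ref{main.result4} and Lemmas~\ref{main.result4.3}--\ref{main.result4.4} to recover the full $\|(\varphi,\psi,\zeta)\|_{\cdot,\cdot,2}$ and $\|\sigma\|_{\cdot,\cdot,4}$ norms, and conclude: in case~(i) a Gr\"onwall inequality gives $e^{-c\beta t}$ decay, hence~\eqref{3.1} for any $\alpha\le\delta\beta$; in case~(ii) the Kawashima--Nishikawa iteration of~\cite{SKawashima,MNishikawa} (as for~\eqref{2.54}), now with the \emph{unshifted} dissipation $\beta\|\cdot\|_{\varepsilon-1,\beta}^2$, yields the rate $(1+\beta t)^{-(\lambda-\varepsilon)}$ in~\eqref{3.3}.

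The hard part will be the linear-algebra step, i.e.\ establishing $Q\ge c(\varphi^2+\psi^2+\zeta^2)$ after elimination of $\sigma$: one has to check, for the five-coefficient form produced by combining $I_1$, the Poisson manipulation and the Cauchy--Schwarz split of $\sigma\psi$, that the diagonal entries and the relevant $2\times2$ discriminants are strictly positive, the decisive inequality being $mu_\infty^2>\gamma RT_\infty+1$, and that the exponentially small profile corrections together with the $O(\beta^2)$ terms from the Poisson step do not spoil positivity. Everything else --- re-deriving the remainder estimates with the new weight and tracking that every error term carries a factor $\mathcal N_{\lambda,\beta}(M)+|\phi_b|+\beta$ --- is routine given Section~2.
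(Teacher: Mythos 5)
Your proposal matches the paper's proof essentially step by step: reuse the identities \eqref{2.0}--\eqref{2.7} with the new weight, extract first-order dissipation from $-\CH_1^{\rm x}$ and boundary nonnegativity from $u_\infty<0$ and $\sigma(t,0)=0$, eliminate $\sigma$ via the Poisson equation plus Cauchy--Schwarz to leave the constant-coefficient quadratic form $Q(\varphi,\psi,\zeta)$ whose positive definiteness (checked via the leading principal minors, the determinant reducing to a positive multiple of $mu_\infty^2-\gamma RT_\infty-1>0$) gives the unshifted zeroth-order dissipation $c\beta\|(\varphi,\psi,\zeta)\|_{\varepsilon-1,\beta}^2$, then repeat for time derivatives, combine with Lemmas~\ref{main.result4}--\ref{main.result4.4}, and run the Kawashima--Nishikawa induction with $\xi=\lambda-\varepsilon+\kappa$ for~(ii) (and Gr\"onwall for~(i)). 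This is precisely the route the paper takes in \eqref{3.12}--\eqref{3.17}, so no comparison is needed beyond noting agreement.
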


 Since it is easier to treat the {\it a priori} estimate for the exponential weight than for the algebraic weight, we would only prove the second part of Proposition \ref{ste.pro2} in the case of algebraic weights for brevity. As in the previous section, we separate estimates into two parts that will be given in Lemma \ref{main.result2.987} and Lemma
 \ref{main.result2.987as}, respectively. Then,
 Proposition \ref{ste.pro2} is proved at the end of this section.

\begin{lemma}\label{main.result2.987}
Under the same conditions as in Proposition \ref{ste.pro2} (ii), for any $\varepsilon\in(0,\lambda]$, there exist constants $\delta>0$ and $C>0$ independent of $M$ such that it holds for any $t\in[0,M]$ and $\xi\geq 0$ that
\begin{eqnarray}
 &&(1+\beta t)^{\xi}\|(\varphi,\psi,\zeta)(t)\|_{\varepsilon,\beta,1}^{2}
+\beta\int_{0}^{t} (1+\beta
\tau)^{\xi} (\|(\varphi,\psi,\zeta)(\tau)\|_{\varepsilon-1,\beta,1}^{2}
+\|\sigma_{x}(\tau)\|_{\varepsilon-1,\beta}^{2})d\tau\notag\\[2mm]
&&\leq C\|(\varphi_{0},\psi_{0},\zeta_{0})\|_{\varepsilon,\beta,1}^{2}
+C\xi\beta\int_{0}^{t} (1+\beta
\tau)^{\xi-1}\|(\varphi,\psi,\zeta)(\tau)\|_{\varepsilon,\beta,1}^{2}d\tau.\label{3.9}
\end{eqnarray}
\end{lemma}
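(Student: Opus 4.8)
The plan is to mimic the structure of the proof of Lemma~\ref{main.result2.9} in the degenerate case, but with the algebraic weight $W_{\varepsilon,\beta}=(1+\beta x)^{\varepsilon}$ now understood under the nondegenerate hypotheses \eqref{1.15a}, where the stationary solution enjoys the \emph{exponential} decay \eqref{1.14} rather than the slow algebraic decay through the function $G(x)$. First I would reuse the identities \eqref{2.1}--\eqref{2.5}: taking the inner product of \eqref{2.0} and its $x$-derivative with $\tilde n(\varphi,\psi,\zeta)$ and $\tilde n(\varphi_x,\psi_x,\zeta_x)$, combining via the Taylor expansion \eqref{2.3} of the Poisson term, I arrive at exactly the same balance law \eqref{2.5}. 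Multiplying by $W_{\varepsilon,\beta}$ and integrating gives the identity \eqref{2.7}, with the same decomposition into $I_1$, the good flux term $\varepsilon\beta\int W_{\varepsilon-1,\beta}(-\CH_1^{\rm x})\,dx$, $I_2$, the (nonnegative) boundary terms, and the remainder $\int W_{\varepsilon,\beta}\CN_1\,dx$. Estimates \eqref{2.10a}--\eqref{2.11b} carry over verbatim since they use only $u_\infty<0$, $T,\tilde T$ close to $T_\infty$, and smallness of $\mathcal N_{\lambda,\beta}(M)+\phi_b$, none of which is specific to the degenerate case.

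The key new point, and the one the authors flag explicitly, is the treatment of $I_1+I_2$. In the nondegenerate case the stationary derivatives $\tilde n_x,\tilde u_x,\tilde T_x,\tilde\phi_x$ are $O(|\phi_b|e^{-cx})$, so $I_2$ (which is built entirely from products of these derivatives with quadratics in the perturbations) is bounded in absolute value by $C|\phi_b|\int W_{\varepsilon,\beta}e^{-cx}(\varphi^2+\psi^2+\zeta^2+\sigma^2)\,dx$, which is absorbable into the zeroth-order part of $I_1$ once $\delta$ is small. The term $I_1=\varepsilon\beta\int W_{\varepsilon-1,\beta}\CL_1\,dx$, with $\CL_1=-e^{-\tilde\phi}\CH_0-\tfrac12\tilde n^2 u\varphi^2$, produces, up to lower-order corrections controlled by $|\phi_b|$ and $\mathcal N_{\lambda,\beta}(M)$, the quadratic form
$$
\tfrac{\varepsilon\beta}{2}(RT_\infty+1)|u_\infty|\varphi^2-\varepsilon\beta RT_\infty\varphi\psi+\tfrac{\varepsilon\beta}{2}m|u_\infty|\psi^2+\tfrac{\varepsilon\beta R|u_\infty|}{2(\gamma-1)T_\infty}\zeta^2-\varepsilon\beta R\zeta\psi+\varepsilon\beta\,\sigma\psi,
$$
integrated against $W_{\varepsilon-1,\beta}$. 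Here the coupling $\sigma\psi$ is handled exactly as in the degenerate case: multiply the Poisson equation \eqref{1.17} by $-\varepsilon\beta\sigma W_{\varepsilon-1,\beta}$ and integrate, which yields $\varepsilon\beta\int W_{\varepsilon-1,\beta}(\sigma_x^2+\tfrac12\sigma^2)\,dx$ on the left against $\tfrac{\varepsilon\beta}{2}\int W_{\varepsilon-1,\beta}\varphi^2\,dx+\tfrac12\varepsilon(\varepsilon-1)(\varepsilon-2)\beta^3\int W_{\varepsilon-3,\beta}\varphi^2\,dx+(\text{small})$ on the right; the $\beta^3 W_{\varepsilon-3,\beta}$ term is a lower-order perturbation because $W_{\varepsilon-3,\beta}\le W_{\varepsilon-1,\beta}$ and $\beta^2\le C\delta$. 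Substituting this into the $\sigma\psi$ slot reduces $I_1+I_2$ to $\beta$ times a $3\times3$ quadratic form $\widetilde Q(x)$ in $(\varphi,\psi,\zeta)$ with constant leading coefficients; the condition $\eqref{1.15a}$, i.e. $u_\infty^2>(\gamma RT_\infty+1)/m$, is precisely what makes $\widetilde Q$ positive definite — this is the analogue of \eqref{2.19}--\eqref{2.20} but now without any $B(x)$-degeneration, so one simply gets $\widetilde Q(x)\ge c(\varphi^2+\psi^2+\zeta^2)$ and hence $I_1+I_2\ge c\beta\|(\varphi,\psi,\zeta)\|_{\varepsilon-1,\beta}^2+c\beta\|\sigma_x\|_{\varepsilon-1,\beta}^2$.

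Finally the remainder $\int W_{\varepsilon,\beta}\CN_1\,dx$, with $\CN_1$ given by \eqref{2.6}, \eqref{def.r0}, \eqref{def.r1x}, is estimated by $C\delta\{\beta\|(\varphi,\psi,\zeta)\|_{\varepsilon-1,\beta,1}^2+\beta\|(\varphi_x,\psi_x,\zeta_x)\|_{\varepsilon-1,\beta}^2\}$ using the $L^\infty$ bounds of the stationary solution, the equations \eqref{1.16} to trade time derivatives for space derivatives, the Sobolev/Hardy-type inequality that $\|(1+\beta x)^{-1}f\|_{\varepsilon-1,\beta}\le C\beta^{-1}\|f\|_{\varepsilon-1,\beta}$-type bounds (here the cubic terms carry an extra factor of the small perturbation, so only a single power of $\beta$ is lost, not three as in the degenerate case), and the elliptic estimate for $\sigma$ in Lemma~\ref{main.result4}. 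Collecting everything into \eqref{2.7}, discarding the nonnegative boundary contributions, and absorbing the $C\delta$ terms for $\delta$ small yields
$$
\frac{d}{dt}\int_{\mathbb{R}_{+}}W_{\varepsilon,\beta}\Big(e^{-\tilde\phi}\CE_0+\CE_1^{\rm x}+\tfrac12\tilde n^2\varphi^2\Big)dx+c\beta\|(\varphi,\psi,\zeta)\|_{\varepsilon-1,\beta,1}^2+c\beta\|\sigma_x\|_{\varepsilon-1,\beta}^2\le 0.
$$
Multiplying by $(1+\beta\tau)^\xi$, integrating over $(0,t)$, and using that the energy density is equivalent to $\|(\varphi,\psi,\zeta)\|_{\varepsilon,\beta,1}^2$ (by positivity of $\CE_0+\CE_1^{\rm x}$ and $\tilde n$ bounded above and below) gives \eqref{3.9}. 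I expect the main obstacle to be the verification of positive definiteness of $\widetilde Q$: one must show that the nondegenerate Bohm inequality $mu_\infty^2>\gamma RT_\infty+1$ implies the relevant discriminant conditions for the full $3\times3$ form after the $\sigma$-elimination, including that the cross term $\zeta\psi$ coefficient $-\varepsilon\beta R$ does not destroy coercivity; this is a finite-dimensional computation but it is the place where the structural hypothesis actually enters, so it must be done carefully rather than quoted from the degenerate case.
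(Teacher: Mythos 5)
Your proposal matches the paper's proof of Lemma~\ref{main.result2.987} step for step: you reuse identity \eqref{2.7}, carry over the flux and boundary estimates \eqref{2.10a}--\eqref{2.11b}, eliminate the $\sigma^2$ deficit produced by the $\sigma\psi$ coupling via the weighted Poisson identity (the paper's \eqref{3.1iup1}--\eqref{3.1gfiuj}), bound $I_2$ and $\CN_1$ using the exponential decay \eqref{1.14} and the smallness hypothesis $\beta+(|\phi_b|+\mathcal N_{\lambda,\beta}(M))/\beta\le\delta$, and conclude from positivity of the resulting $3\times3$ quadratic form in $(\varphi,\psi,\zeta)$. Your observation that the strict Bohm inequality $mu_\infty^2>\gamma RT_\infty+1$ is exactly the positive-definiteness condition is correct and is precisely how the paper passes from \eqref{3.13de} to the Gronwall inequality \eqref{3.15}.
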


 \begin{proof}
 As in the proof of Lemma \ref{main.result2.9}, one can repeat the same procedure to obtain the identity \eqref{2.7}. It remains to re-estimate each term in $\eqref{2.7}$. First of all, one can still show \eqref{2.10a} in the same way so that  \eqref{2.11a} holds true. For the boundary terms on the left-hand side of \eqref{2.7}, the non-negativity estimates \eqref{2.25} and \eqref{2.11b} are also satisfied. Only the slight differences occur to estimates on $I_1$, $I_2$ and the right-hand term of $\eqref{2.7}$; it is indeed much easier to make estimates in the non-degenerate case than in the degenerate case considered before. In fact, for $I_1$, it holds that
 \begin{align}
 I_{1} &\geq\int_{\mathbb{R}_{+}}\varepsilon\beta
W_{\varepsilon-1,\beta}\Big\{
\frac{1}{2}(RT_{\infty}+1)|u_{\infty}|\varphi^{2}-RT_{\infty}\varphi\psi+\frac{1}{2}m|u_{\infty}|\psi^{2}\notag\\[2mm]
&\qquad\qquad\qquad\qquad+\frac{R
|u_{\infty}|}{2(\gamma-1)T_{\infty}}\zeta^{2}-R\zeta\psi+\sigma\psi\Big\}dx\notag\\[2mm]
&\quad
-C(\mathcal {N}_{\lambda,\beta}(M)+\phi_{b})\int_{\mathbb{R}_{+}}\varepsilon\beta
W_{\varepsilon-1,\beta}(\varphi^{2}+\psi^{2}+\zeta^{2})dx.
\label{3.12}
\end{align}
Furthermore, using the Cauchy-Schwarz inequality
$\sigma\psi\geq-(\frac{|u_{\infty}|}{2}\sigma^{2}+\frac{1}{2|u_{\infty}|}\psi^{2})$,
it follows from \eqref{3.12} that
\begin{eqnarray}\label{3.13}
 \begin{aligned}[b]I_{1} \geq&\int_{\mathbb{R}_{+}}\varepsilon\beta
W_{\varepsilon-1,\beta}\Big\{
\frac{|u_{\infty}|}{2}(RT_{\infty}+1)\varphi^{2}-RT_{\infty}\varphi\psi+\frac{m|u_{\infty}|^{2}-1}{2|u_{\infty}|}\psi^{2}\\[2mm]
&\qquad\qquad\qquad\qquad+\frac{R
|u_{\infty}|\zeta^{2}}{2(\gamma-1)T_{\infty}}-R\zeta\psi-\frac{|u_{\infty}|}{2}\sigma^{2}\Big\}dx\\[2mm] &
-C(\mathcal {N}_{\lambda,\beta}(M)+\phi_{b})\int_{\mathbb{R}_{+}}\varepsilon\beta
W_{\varepsilon-1,\beta}(\varphi^{2}+\psi^{2}+\zeta^{2})dx.
\end{aligned}
\end{eqnarray}
To deal with the bad term $-\int_{\mathbb{R}_{+}}\varepsilon\beta
W_{\varepsilon-1,\beta}\frac{|u_{\infty}|}{2}\sigma^{2} dx$ on the right-hand side of \eqref{3.13}, we first rewrite $\eqref{2.3}$ as the form of
\begin{eqnarray}\label{3.1phgl3}
 \begin{aligned}
\sigma_{xx}=\varphi+\sigma+(\tilde{n}-1)\varphi
+(e^{-\tilde{\phi}}-1)\sigma+\frac{\tilde{n}}{2}e^{\theta_{1}\varphi}\varphi^{2}-\frac{e^{-\tilde{\phi}}}{2}e^{-\theta_{2}\sigma}\sigma^{2},\
\ \  \theta_{1},\theta_{2}\in(0,1).
\end{aligned}
\end{eqnarray}
Then, by multiplying \eqref{3.1phgl3} by $-|u_{\infty}|\varepsilon\beta W_{\varepsilon-1,\beta}\sigma$, taking the integration over $\mathbb{R}_{+}$ and using the boundary condition $\sigma(t,0)=0$ as well as \eqref{1.14}, one has
\begin{align}
&|u_{\infty}|\varepsilon\beta\int_{\mathbb{R}_{+}}
W_{\varepsilon-1,\beta}\sigma_{x}^{2}dx+\int_{\mathbb{R}_{+}} |u_{\infty}|\varepsilon(\varepsilon-1)\beta^{2}W_{\varepsilon-2,\beta}\sigma\sigma_{x}dx
\notag\\[2mm]
&\leq -\int_{\mathbb{R}_{+}}|u_{\infty}|\varepsilon\beta W_{\varepsilon-1,\beta}\varphi \sigma dx
-\int_{\mathbb{R}_{+}}|u_{\infty}|\varepsilon\beta W_{\varepsilon-1,\beta}\sigma^{2}dx
\notag\\[2mm]
&\quad+C(\phi_{b}+\|\sigma\|_{\infty})\int_{\mathbb{R}_{+}}\varepsilon\beta
W_{\varepsilon-1,\beta}(\varphi^{2}+\sigma^{2})dx.
\label{3.1iup1}
\end{align}
Applying the Cauchy-Schwarz inequality $-\sigma\varphi\leq\frac{1}{2}\varphi^{2}+\frac{1}{2}\sigma^{2}$, the first two terms on the right-hand side of \eqref{3.1iup1} are bounded by
\begin{equation*}
\frac{1}{2}\int_{\mathbb{R}_{+}}|u_{\infty}|\varepsilon\beta W_{\varepsilon-1,\beta}\varphi^{2}dx
-\frac{1}{2}\int_{\mathbb{R}_{+}}|u_{\infty}|\varepsilon\beta W_{\varepsilon-1,\beta}\sigma^{2}dx.
\end{equation*}
To estimate the last term on the right-hand side of \eqref{3.1iup1}, from the Sobolev inequality, we notice that
\begin{equation*}
\|\sigma\|_{\infty}\leq C(\|\sigma\|+\|\sigma_{x}\|),
\end{equation*}
so it holds that $\|\sigma\|_{\infty}\leq C\|\varphi\|$ due to the elliptic estimate in Lemma \ref{main.result4}. Therefore, using $\|\sigma\|_{\varepsilon-1,\beta}\leq C\|\varphi\|_{\varepsilon-1,\beta}$ due to Lemma \ref{main.result4} once again, it follows that
\begin{equation*}
C(\phi_{b}+\|\sigma\|_{\infty})\int_{\mathbb{R}_{+}}\varepsilon\beta
W_{\varepsilon-1,\beta}(\varphi^{2}+\sigma^{2})dx
\leq C(\mathcal
{N}_{\lambda,\beta}(M)+\phi_{b})\int_{\mathbb{R}_{+}}\varepsilon\beta
W_{\varepsilon-1,\beta}\varphi^{2}dx.
\end{equation*}
The second term on the left hand side of \eqref{3.1iup1} can been treated by using the integration by parts together with the boundary condition $\sigma(t,0)=0$ as
 \begin{eqnarray*}
 \begin{aligned}
&\int_{\mathbb{R}_{+}} |u_{\infty}|\varepsilon(\varepsilon-1)\beta^{2}W_{\varepsilon-2,\beta}\sigma\sigma_{x}dx
=-\frac{1}{2}\int_{\mathbb{R}_{+}}|u_{\infty}|\varepsilon(\varepsilon-1)(\varepsilon-2)\beta^{3}W_{\varepsilon-3,\beta}\sigma^{2}dx,
\end{aligned}
\end{eqnarray*}
where by using  the elliptic estimate in Lemma \ref{main.result4}, it further holds that
\begin{equation*}
\left|\int_{\mathbb{R}_{+}}|u_{\infty}|\varepsilon(\varepsilon-1)(\varepsilon-2)\beta^{3}W_{\varepsilon-3,\beta}\sigma^{2}dx\right|\leq C\varepsilon\beta^3\int_{\mathbb{R}_{+}}W_{\varepsilon-1,\beta}\varphi^{2}dx.
\end{equation*}
Plugging all the above estimates into \eqref{3.1iup1} gives that
 \begin{align}
&\frac{1}{2}\int_{\mathbb{R}_{+}}|u_{\infty}|\varepsilon\beta W_{\varepsilon-1,\beta}\varphi^{2}dx
-\frac{1}{2}\int_{\mathbb{R}_{+}}|u_{\infty}|\varepsilon\beta W_{\varepsilon-1,\beta}\sigma^{2}dx\notag \\[2mm]
&\geq \int_{\mathbb{R}_{+}}|u_{\infty}|\varepsilon\beta
W_{\varepsilon-1,\beta}\sigma_{x}^{2}dx
-C(\mathcal
{N}_{\lambda,\beta}(M)+\phi_{b})\int_{\mathbb{R}_{+}}\varepsilon\beta
W_{\varepsilon-1,\beta}\varphi^{2}dx\notag\\[2mm]
&\quad-C\varepsilon\beta^3\int_{\mathbb{R}_{+}}
W_{\varepsilon-1,\beta}\varphi^{2}dx.
\label{3.1gfiuj}
\end{align}
We then substitute $\eqref{3.1gfiuj}$ back to $\eqref{3.13}$ and take $\delta>0$ suitably small so as to obtain
\begin{eqnarray}\label{3.13de}
 \begin{aligned}[b]I_{1} \geq&\int_{\mathbb{R}_{+}}\varepsilon\beta
W_{\varepsilon-1,\beta}\Big\{
\frac{|u_{\infty}|}{2}RT_{\infty}\varphi^{2}-RT_{\infty}\varphi\psi+\frac{m|u_{\infty}|^{2}-1}{2|u_{\infty}|}\psi^{2}+\frac{R
|u_{\infty}|\zeta^{2}}{2(\gamma-1)T_{\infty}}-R\zeta\psi\Big\}dx\\[2mm]
&+\int_{\mathbb{R}_{+}}|u_{\infty}|\varepsilon\beta
W_{\varepsilon-1,\beta}\sigma_{x}^{2}dx
-C(\mathcal {N}_{\lambda,\beta}(M)+\phi_{b})\int_{\mathbb{R}_{+}}\varepsilon\beta
W_{\varepsilon-1,\beta}(\varphi^{2}+\psi^{2}+\zeta^{2})dx\\[2mm]
&-C\beta\delta^{2}\int_{\mathbb{R}_{+}}
W_{\varepsilon-1,\beta}\varphi^{2}dx,
\end{aligned}
\end{eqnarray}
where  $\frac{\gamma RT_{\infty}+1}{m}<
u^{2}_{\infty}$ and $\eqref{3.2b}$
 are applied in the last inequality.

Now we estimate $I_{2}$ and the last term  in
$\eqref{2.7}$. In fact, it holds that
 \begin{align}
&|I_{2}|+\left|\int_{\mathbb{R}_{+}} W_{\varepsilon,\beta}\CN_{1} dx\right|\notag\\[2mm]
&\leq  C(\mathcal
{N}_{\lambda,\beta}(M)+\phi_{b})\int_{\mathbb{R}_{+}}\varepsilon\beta
W_{\varepsilon-1,\beta}(\varphi^{2}+\psi^{2}+\zeta^{2}+\varphi_{x}^{2}+\psi_{x}^{2}+\zeta_{x}^{2})dx\notag\\[2mm]
&\leq
C\beta\delta\|(\varphi,\psi,\zeta)\|_{\varepsilon-1,\beta,1}^{2},\label{3.14}
\end{align}
where we have used Lemma \ref{1.1} (iii), $\eqref{1.16}$, $\eqref{3.2b}$,
$\lambda\geq 2$, the Cauchy-Schwarz inequality and the elliptic estimate
in Lemma \ref{main.result4}.

Substituting $\eqref{3.13de}$ and $\eqref{3.14}$
into $\eqref{2.7}$, we have
\begin{equation}
\frac{d}{dt}\int_{\mathbb{R}_{+}} W_{\varepsilon,\beta}(e^{-\tilde{\phi}}\CE_0
+\CE_1^{\rm x}+\frac{1}{2}\tilde{n}^{2}\varphi^{2})dx
+\beta\|(\varphi,\psi,\zeta)\|_{\varepsilon-1,\beta,1}^{2}+\beta\|\sigma_{x}\|_{\varepsilon-1,\beta}^{2}
\leq 0,\label{3.15}
\end{equation}
provided that $\delta>0$ is sufficiently small, where $\CE_0$ and $\CE_1^{\rm x}$ are defined in \eqref{def.e0} and \eqref{def.e1x} respectively. Therefore, the desired estimate \eqref{3.9} follows from multiplying $\eqref{3.15}$ by $(1+\beta \tau)^{\xi}$ and integrating the resulting inequality over $(0,t)$. This then completes the proof of Lemma \ref{main.result2.987}.
\end{proof}

\begin{lemma}\label{main.result2.987as}
Under the same conditions as in Proposition \ref{ste.pro2}, for any
$\varepsilon\in(0,\lambda]$, there exist positive constants $C$ and
$\delta$ independent of $M$ such that it holds for any
$t\in[0,M]$ and $\xi\geq 0$ that
 \begin{align}
 &(1+\beta t)^{\xi}\|(\varphi_{t},\psi_{t},\zeta_{t})\|_{\varepsilon,\beta,1}^{2}
+\beta\int_{0}^{t} (1+\beta
\tau)^{\xi}\left(\|(\varphi_{t},\psi_{t},\zeta_{t})(\tau)\|_{\varepsilon-1,\beta,1}^{2}
+\|\sigma_{tx}(\tau)\|_{\varepsilon-1,\beta}^{2}\right)d\tau\notag\\[2mm]
&\leq C\|(\varphi_{t0},\psi_{t0},\zeta_{t0})\|_{\varepsilon,\beta,1}^{2}
+C\xi\beta\int_{0}^{t} (1+\beta
\tau)^{\xi-1}\|(\varphi_{t},\psi_{t},\zeta_{t})(\tau)\|_{\varepsilon,\beta,1}^{2}d\tau\notag\\[2mm]
&\quad+C\delta\beta\int_{0}^{t}
(1+\beta
\tau)^{\xi}\|(\varphi,\psi,\zeta)(\tau)\|_{\varepsilon-1,\beta,2}^{2}d\tau.\label{3.16}
\end{align}
\end{lemma}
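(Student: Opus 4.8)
The plan is to mirror the derivation of \eqref{3.9} in Lemma \ref{main.result2.987}, with one extra time derivative applied throughout. First I would differentiate the symmetrized hyperbolic form \eqref{2.0} in $t$ and take the inner product of the result with $\tilde{n}(\varphi_{t},\psi_{t},\zeta_{t})$; using $\tilde{v}_{x}=\tilde{n}_{x}/\tilde{n}$ this produces an identity of the same shape as \eqref{2.27}, with quadratic energy density $\CE_1^{\rm t}$, flux $\CH_1^{\rm t}$, the linear dissipation $\CD_1^{\rm t}$, the coupling term $\tilde{n}\psi_{xt}\sigma_{t}$, and a cubic remainder $\CR_1^{\rm t}$. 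Next I would apply $\partial_x\partial_t$ to \eqref{2.0}, pair with $\tilde{n}(\varphi_{xt},\psi_{xt},\zeta_{xt})$, and obtain the first-order-in-$x$ counterpart \eqref{2.28}, with density $\CE_2^{\rm xt}$, flux $\CH_2^{\rm xt}$, coupling term $-\tilde{n}\psi_{xt}\sigma_{xxt}$ and remainder $\CR_2^{\rm xt}$. Forming the combination $e^{-\tilde{\phi}}\times$\eqref{2.27}$+$\eqref{2.28}, and treating the $\sigma_{t}$ contributions by the time derivative \eqref{2.29} of the Taylor-expanded Poisson relation together with the first equation of \eqref{1.16}, yields the conservation-type identity \eqref{2.31}. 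Multiplying by $W_{\varepsilon,\beta}$ and integrating over $\mathbb{R}_{+}$ gives the weighted energy identity \eqref{2.33}, now written entirely in terms of $(\varphi_{t},\psi_{t},\zeta_{t})$.

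The positivity of the first-order spatial dissipation follows from $u_{\infty}<0$ exactly as for \eqref{2.10a}: under the smallness of $\delta$ one has $-\CH_2^{\rm xt}/\tilde{n}\geq (c-C\delta\beta)(\varphi_{xt}^{2}+\psi_{xt}^{2}+\zeta_{xt}^{2})$, hence $\int_{\mathbb{R}_{+}}\varepsilon\beta W_{\varepsilon-1,\beta}(-\CH_2^{\rm xt})\,dx\geq c\beta\|(\varphi_{tx},\psi_{tx},\zeta_{tx})\|_{\varepsilon-1,\beta}^{2}$, and the boundary terms at $x=0$ are non-negative for the same reason together with $\sigma_{t}(t,0)=0$. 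For $I_{3}$ and $I_{4}$ I would repeat the nondegenerate treatment of $I_{1}$ and $I_{2}$ with $(\varphi_{t},\psi_{t},\zeta_{t},\sigma_{t})$ in place of $(\varphi,\psi,\zeta,\sigma)$: completing the square in $I_{3}$ leaves a negative multiple of $\int W_{\varepsilon-1,\beta}\sigma_{t}^{2}$, which I would absorb by differentiating \eqref{3.1phgl3} in $t$, multiplying by $-|u_{\infty}|\varepsilon\beta W_{\varepsilon-1,\beta}\sigma_{t}$, integrating by parts, and invoking the elliptic estimate of Lemma \ref{main.result4} (which now gives $\|\sigma_{t}\|_{\varepsilon-1,\beta}\leq C\|\varphi_{t}\|_{\varepsilon-1,\beta}$ and $\|\sigma_{t}\|_{\infty}\leq C\|\varphi_{t}\|$) together with the exponential decay \eqref{1.14} of the stationary profile; the lower-order weight corrections $W_{\varepsilon-2,\beta}$ and $W_{\varepsilon-3,\beta}$ generated in this step carry an extra power of $\beta$ and are harmless by smallness of $\beta$. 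The remaining quadratic form in $(\varphi_{t},\psi_{t},\zeta_{t})$ is positive definite because $u_{\infty}^{2}>(\gamma RT_{\infty}+1)/m$, so that $I_{3}+I_{4}\geq c\beta\big(\|(\varphi_{t},\psi_{t},\zeta_{t})\|_{\varepsilon-1,\beta,1}^{2}+\|\sigma_{tx}\|_{\varepsilon-1,\beta}^{2}\big)-C(\mathcal{N}_{\lambda,\beta}(M)+\phi_{b})(\cdots)$.

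It then remains to bound $\int_{\mathbb{R}_{+}}W_{\varepsilon,\beta}\CN_{2}\,dx$, using Lemma \ref{1.1}(iii), the equations \eqref{1.16} (to trade $\varphi_{t}$ for $\varphi_{x},\psi_{x},\dots$), \eqref{3.2b}, $\lambda\geq 2$, Cauchy--Schwarz and Lemma \ref{main.result4}. The only new feature compared with \eqref{3.14} is that $\CR_2^{\rm xt}$ contains terms carrying second spatial derivatives $\varphi_{xx},\psi_{xx},\zeta_{xx}$; these are estimated by $C\delta\beta\|(\varphi,\psi,\zeta)\|_{\varepsilon-1,\beta,2}^{2}$, which is exactly the extra term allowed on the right of \eqref{3.16}, while every contribution that is genuinely quadratic in the time derivatives is absorbed into the dissipation by smallness of $\delta$. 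Substituting these estimates into \eqref{2.33} yields
\begin{equation*}
\frac{d}{dt}\int_{\mathbb{R}_{+}}W_{\varepsilon,\beta}\Big(e^{-\tilde{\phi}}\CE_1^{\rm t}+\CE_2^{\rm xt}+\tfrac12\tilde{n}^{2}\varphi_{t}^{2}\Big)\,dx+c\beta\|(\varphi_{t},\psi_{t},\zeta_{t})\|_{\varepsilon-1,\beta,1}^{2}+c\beta\|\sigma_{tx}\|_{\varepsilon-1,\beta}^{2}\leq C\delta\beta\|(\varphi,\psi,\zeta)\|_{\varepsilon-1,\beta,2}^{2},
\end{equation*}
and, since the weighted energy on the left is comparable to $\|(\varphi_{t},\psi_{t},\zeta_{t})\|_{\varepsilon,\beta,1}^{2}$, multiplying by $(1+\beta\tau)^{\xi}$ and integrating over $(0,t)$ gives \eqref{3.16}. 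I expect the main obstacle to be the control of the spurious $\int W_{\varepsilon-1,\beta}\sigma_{t}^{2}$ term appearing in $I_{3}$: one must extract from the time-differentiated Poisson equation a dissipation $\int W_{\varepsilon-1,\beta}\sigma_{tx}^{2}$ strong enough to close the estimate, while checking that the resulting quadratic form in $(\varphi_{t},\psi_{t},\zeta_{t})$ stays positive definite and that the second-derivative remainders land exactly in the $\|(\varphi,\psi,\zeta)\|_{\varepsilon-1,\beta,2}^{2}$ term, which is later reabsorbed when \eqref{3.16} is combined with \eqref{3.9} in the proof of Proposition \ref{ste.pro2}.
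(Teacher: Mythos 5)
Your proposal is correct and follows exactly the route the paper intends: the paper's own proof of Lemma~\ref{main.result2.987as} is the one-line remark that \eqref{3.16} ``follows similarly as for obtaining \eqref{3.9} ... after taking one more time derivative,'' and you have reconstructed precisely those omitted details -- differentiating \eqref{2.0} in $t$ to reach \eqref{2.27}--\eqref{2.33}, reusing the nondegenerate treatment of $I_1$ and $I_2$ for $I_3$ and $I_4$ (including the $t$-differentiated Poisson relation to convert the spurious $\sigma_t^2$ contribution into $\sigma_{tx}^2$ dissipation), and tracking the second spatial derivatives in $\CR_2^{\rm xt}$ into the allowed $C\delta\beta\|(\varphi,\psi,\zeta)\|^2_{\varepsilon-1,\beta,2}$ error term.
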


\begin{proof}
After taking one more time derivative, \eqref{3.16} follows similarly as for obtaining \eqref{3.9} in Lemma \ref{main.result2.987} and hence details of the proof are omitted for brevity.
\end{proof}

Now, following Lemma \ref{main.result2.987} and Lemma \ref{main.result2.987as} above, we are ready to give the

\begin{proof}[Proof of Proposition \ref{ste.pro2}]
Adding the obtained estimates $\eqref{3.16}$ and $\eqref{3.9}$ together, applying Lemma \ref{main.result4.3} and Lemma \ref{main.result4.4},
and taking $\delta>0$ sufficiently small, we conclude that
 \begin{align}
 &(1+\beta t)^{\xi}\|(\varphi,\psi,\zeta)(\tau)\|_{\varepsilon,\beta,2}^{2}
+\beta\int_{0}^{t} (1+\beta
\tau)^{\xi}\|(\varphi,\psi,\zeta)(\tau)\|_{\varepsilon-1,\beta,2}^{2}d\tau
\notag \\[2mm]
&\leq
 C\|(\varphi_{0},\psi_{0},\zeta_{0})\|_{\varepsilon,\beta,2}^{2}
+C\xi\beta\int_{0}^{t} (1+\beta
\tau)^{\xi-1}\|(\varphi,\psi,\zeta)(\tau)\|_{\varepsilon,\beta,2}^{2}d\tau,\label{3.17}
\end{align}
for any $t\geq 0$ and $\xi\geq 0$.
Then, in terms of \eqref{3.17}, employing the induction argument similar as \cite{SKawashima} and \cite{MNishikawa} with $\xi=\lambda-\varepsilon+\kappa$  for an arbitrary positive constant $\kappa$
and combining the elliptic estimates in Lemma \ref{main.result4} yield that for any $t\geq 0$,
 \begin{align*}
 &(1+\beta
 t)^{\lambda-\varepsilon+\kappa}(\|(\varphi,\psi,\zeta)(\tau)\|_{\varepsilon,\beta,2}^{2}+\|\sigma(\tau)\|_{\varepsilon,\beta,4}^{2})\notag \\[2mm]
 &
\quad+\beta\int_{0}^{t} (1+\beta
\tau)^{\lambda-\varepsilon+\kappa}(\|(\varphi,\psi,\zeta)(\tau)\|_{\varepsilon-1,\beta,2}^{2}+\|\sigma(\tau)\|_{\varepsilon-1,\beta,4}^{2})d\tau
\notag \\[2mm]
&\leq C(1+\beta
 t)^{\kappa}\|(\varphi_{0},\psi_{0},\zeta_{0})\|_{\lambda,\beta,2}^{2},
\end{align*}
which proves \eqref{3.3}. This then completes the proof of the second part (ii)  of Proposition \ref{ste.pro2}. As mentioned before, for the part (i) corresponding to the exponential weight case, the proof of \eqref{3.1} follows in a similar way and thus is omitted for brevity. We therefore conclude the proof of Proposition \ref{ste.pro2}.
\end{proof}

\section{Appendix}\label{Append}
In this appendix, we will give some basic results used in the proof of Proposition \ref{ste.pro1.1} and Proposition \ref{ste.pro2}. Those lemmas below are similar to ones obtained in \cite{NOS}.

\begin{lemma}\label{main.result4}
Consider the elliptic equation $\eqref{1.17}$.

\medskip
(i) Let  $((1+\mu x)^{\lambda/2}\varphi,(1+\mu
x)^{\lambda/2}\sigma)\in \mathscr{X}_{2}([0,M]) \times
\mathscr{X}_{2}^{2}([0,M])$ for positive constants
$\lambda$ and $\mu$. Then, for any constant $c_0\in(0,2],$
there exist
 positive constants $\delta$ and $C$ independent of $M$ such that if all the conidtions
$\alpha\leq\lambda$, $\beta\in(0,\mu]$, $|\alpha\beta|\leq c_0$ and
$|\phi_{b}|+\mathcal {N}_{\lambda,\beta}(M)\leq \delta$ are satisfied, then $\sigma$ satisfies $(1+\beta x)^{\alpha/2}\sigma\in
 \mathscr{X}_{2}^{2}([0,M])$ with
\begin{eqnarray}\label{4.1}
 \begin{aligned}[b]
 \|(1+\beta x)^{\alpha/2}\partial_{t}^{i}\sigma\|_{H^{j}}\leq
 C\|(1+\beta x)^{\alpha/2}\varphi\|_{H^{i+j-2}},\ \ \ i\in
 \mathbb{Z}\cap[0,2], \ \  j\in\mathbb{Z}\cap[2,4-i].
\end{aligned}
\end{eqnarray}

\medskip
(ii) Let $(e^{\lambda x/2}\varphi,e^{\lambda x/2}\sigma)\in
\mathscr{X}_{2}([0,M]) \times \mathscr{X}_{2}^{2}([0,M])$ for  positive constants $\lambda$. Then, for any constant
$c_0\in(0,\sqrt{2}]$, there exist
 positive constants $\delta$ and $C$ independent of $M$ such that if the conditions $\beta\in(0,c_0]$ and $|\phi_{b}|+\mathcal
{N}_{\lambda}(M)\leq \delta$ are satisfied, then $\sigma$ satisfies
$e^{\beta x/2}\sigma\in
 \mathscr{X}_{2}^{2}([0,M])$ with
\begin{eqnarray}\label{4.2}
 \begin{aligned}[b]
 \|e^{\beta x/2}\partial_{t}^{i}\sigma\|_{H^{j}}\leq
 C\|e^{\beta x/2}\varphi\|_{H^{i+j-2}},\ \ \ i\in
 \mathbb{Z}\cap[0,2],\ \  j\in\mathbb{Z}\cap[2,4-i].
\end{aligned}
\end{eqnarray}
\end{lemma}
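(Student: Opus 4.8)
The plan is to regard the Poisson equation \eqref{1.17} at each fixed time as a one‑dimensional semilinear elliptic problem for $\sigma$ on $\mathbb{R}_{+}$, subject to the Dirichlet condition $\sigma(t,0)=0$ and to the decay at $x=\infty$ encoded in the hypothesis $(1+\beta x)^{\alpha/2}\sigma\in\mathscr{X}_{2}^{2}([0,M])$, and to run a weighted $L^{2}$ energy estimate for the coercive operator $-\partial_{xx}+1$. Using the Taylor expansion already carried out in the body, I would rewrite \eqref{1.17} as in \eqref{3.1phgl3}: its right‑hand side splits into the coercive part $\sigma$, the source $\varphi$, the linear perturbations $(\tilde{n}-1)\varphi$ and $(e^{-\tilde{\phi}}-1)\sigma$ which are $O(\phi_{b})$ by the decay estimate \eqref{1.14}, and the quadratic remainders $\tfrac12\tilde{n}e^{\theta_{1}\varphi}\varphi^{2}$ and $-\tfrac12 e^{-\tilde{\phi}}e^{-\theta_{2}\sigma}\sigma^{2}$ which are $O(\mathcal{N}_{\lambda,\beta}(M))$ (resp.\ $O(\mathcal{N}_{\lambda}(M))$ in part (ii)) after the Sobolev embedding $\|\varphi\|_{L^{\infty}}+\|\sigma\|_{L^{\infty}}\le C(\|(\varphi,\sigma)\|+\|(\varphi_{x},\sigma_{x})\|)$.

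For the base case $i=0$, $j=2$ of \eqref{4.1}, I would multiply \eqref{3.1phgl3} by $-W_{\alpha,\beta}\sigma$ and integrate over $\mathbb{R}_{+}$. Integration by parts produces $\|\sigma_{x}\|_{\alpha,\beta}^{2}+\|\sigma\|_{\alpha,\beta}^{2}$, no boundary contributions (the term at $x=0$ vanishes since $\sigma(t,0)=0$, the one at $x=\infty$ by the decay hypothesis), and the weight cross term $\alpha\beta\int_{\mathbb{R}_{+}}W_{\alpha-1,\beta}\sigma\sigma_{x}\,dx$; applying Young's inequality with a parameter $\eta$ close to $1$ and using $W_{\alpha-1,\beta}^{2}/W_{\alpha,\beta}=W_{\alpha-2,\beta}\le W_{\alpha,\beta}$ bounds this cross term by $\eta\|\sigma_{x}\|_{\alpha,\beta}^{2}+\tfrac{(\alpha\beta)^{2}}{4\eta}\|\sigma\|_{\alpha,\beta}^{2}$. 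Since $(\alpha\beta)^{2}\le c_{0}^{2}$, the restriction $c_{0}\le 2$ lets one choose $\eta\in(\tfrac{c_{0}^{2}}{4},1)$ and thus keep $-\partial_{xx}+1$ positive definite under the weight — this is exactly where the condition $|\alpha\beta|\le c_{0}$ enters; in part (ii) the weight $e^{\beta x}$ gives instead the term $\tfrac{\beta^{2}}{2}\|e^{\beta x/2}\sigma\|^{2}$, whence the threshold $\beta\le c_{0}\le\sqrt{2}$, the endpoint being absorbed via a weighted Hardy‑type inequality that uses $\sigma(t,0)=0$. The source term $\int W_{\alpha,\beta}\sigma\varphi$ is split by Young's inequality, and the $O(\phi_{b}+\mathcal{N}_{\lambda,\beta}(M))$ contributions are absorbed once $\delta$ is small; this needs the elementary bootstrap $\|\sigma\|_{L^{\infty}}\le C\|\varphi\|$ and $\|\sigma\|_{\alpha,\beta}\le C\|\varphi\|_{\alpha,\beta}$, which follow from the same estimate taken with $\alpha=0$ together with the decay hypothesis. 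One thus gets $\|(1+\beta x)^{\alpha/2}\sigma\|_{H^{1}}\le C\|(1+\beta x)^{\alpha/2}\varphi\|$, and reading $\sigma_{xx}$ off \eqref{3.1phgl3} upgrades this to the $H^{2}$ bound.

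The higher‑order cases follow by bootstrap on \eqref{3.1phgl3}. For $j=3,4$ with $i=0$, one expresses $\sigma_{xx}$ from \eqref{3.1phgl3} and differentiates, so that $\|(1+\beta x)^{\alpha/2}\sigma\|_{H^{j}}$ is controlled by $\|(1+\beta x)^{\alpha/2}\sigma\|_{H^{1}}+\|(1+\beta x)^{\alpha/2}(\varphi+\sigma+\text{remainders})\|_{H^{j-2}}$, and the already‑obtained $i=0,j=2$ bound together with the smallness of the remainders (by the product rule and Sobolev embedding) closes it. For $i\ge1$ one applies $\partial_{t}$, respectively $\partial_{t}^{2}$, to \eqref{1.17}: the leading structure is preserved, $\partial_{t}^{i}\sigma_{xx}=\partial_{t}^{i}\sigma+\partial_{t}^{i}\varphi+(\text{error})$, and I would then use the first equation of \eqref{1.16}, namely $\varphi_{t}=-u\varphi_{x}-\psi_{x}-\psi\tilde{v}_{x}$, together with its $x$‑derivatives, to trade each time derivative of $\varphi$ for one additional spatial derivative; this is precisely what accounts for the index $i+j-2$ on the right‑hand side of \eqref{4.1}. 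The terms so generated, and $\partial_{t}^{i}$ of the quadratic remainder, are again $O(\delta)$ and absorbed, using at each stage the bounds established at lower order.

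I expect the main obstacle to be pinning down the coercivity threshold: the operator $-\partial_{xx}+1$ must stay positive definite after conjugation by the weight, which forces exactly the bounds $|\alpha\beta|\le c_{0}\le 2$ (respectively $\beta\le c_{0}\le\sqrt{2}$), and the endpoint case requires the weighted Hardy argument. A secondary bookkeeping point is ensuring that no more than $i+j-2$ derivatives of $\varphi$ appear on the right in the mixed $(\partial_{t},\partial_{x})$‑estimates; apart from these, everything reduces to routine weighted integration by parts and absorption of small terms.
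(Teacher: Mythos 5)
The paper's own proof of this lemma is omitted (it simply defers to ``the standard elliptic estimate''), so your task is really to supply the details; your proposal does exactly that and follows the approach one would naturally reconstruct. Your coercivity bookkeeping is right: multiplying $\eqref{3.1phgl3}$ by $-W_{\alpha,\beta}\sigma$ and integrating yields $\|\sigma_x\|_{\alpha,\beta}^2+\|\sigma\|_{\alpha,\beta}^2+\alpha\beta\int W_{\alpha-1,\beta}\sigma\sigma_x$; applying Young with parameter $\eta$ and using $W_{\alpha-1,\beta}\leq W_{\alpha,\beta}$ gives coercivity exactly when $|\alpha\beta|<2$ (optimal at $\eta=1$), while in the exponential case $\beta\int e^{\beta x}\sigma\sigma_x=-\tfrac{\beta^2}{2}\int e^{\beta x}\sigma^2$ identically (after integration by parts with $\sigma(t,0)=0$), giving the threshold $\beta<\sqrt 2$. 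Your bootstrap to $H^j$ by differentiating $\eqref{3.1phgl3}$ and your use of the continuity equation $\varphi_t=-u\varphi_x-\psi_x-\psi\tilde v_x$ to trade time derivatives for space derivatives are the standard moves, and the smallness absorptions via $\|\sigma\|_{L^\infty}\leq C\|\varphi\|$ are correct.

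Two small caveats worth flagging, both bookkeeping rather than conceptual. First, the trade $\varphi_t\mapsto -u\varphi_x-\psi_x-\psi\tilde v_x$ necessarily introduces $\psi$ (and, one step further for $i=2$, also $\zeta$), so the right-hand side of $\eqref{4.1}$ produced by your argument is really $\|(1+\beta x)^{\alpha/2}(\varphi,\psi,\zeta)\|_{H^{i+j-2}}$ rather than the $\varphi$ alone written in the statement; this is a harmless discrepancy in how the lemma is phrased (and is how it is actually applied downstream, e.g.\ in Lemma \ref{main.result4.3}), but you should either carry $(\varphi,\psi,\zeta)$ through or note the absorption of the $\psi,\zeta$ contributions via the smallness $\mathcal N_{\lambda,\beta}(M)\le\delta$. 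Second, the Young argument only gives $|\alpha\beta|<2$ strictly, so the stated endpoint $c_0=2$ (resp.\ $c_0=\sqrt 2$) is not covered by that step alone; your suggestion to use a weighted Hardy inequality at the endpoint is the right kind of repair, but you should verify it, since the exact integration-by-parts identity $\alpha\beta\int W_{\alpha-1,\beta}\sigma\sigma_x=-\tfrac{\alpha(\alpha-1)\beta^2}{2}\int W_{\alpha-2,\beta}\sigma^2$ does not by itself rescue the endpoint when $\beta$ is small (which is the regime of interest). In the regime actually used in the paper $|\alpha\beta|$ is small, so in practice only the open range matters, but the statement as written formally includes the endpoint.
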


\begin{proof}
The desired estimates \eqref{4.1} and \eqref{4.2} can be derived by the standard elliptic estimate on \eqref{1.17}. For brevity, we omit their proofs.
\end{proof}

\begin{lemma}\label{main.result4.2}
Under the same assumptions as in either Proposition \ref{ste.pro1.1} for the
degenerate case or Proposition \ref{ste.pro2} (ii) for the nondegenerate case,
it holds for any $t\in[0,M]$ and $\alpha \leq\lambda/2$ that
\begin{eqnarray}
&\dis \|((1+\beta x)^{\alpha}{(\varphi,\psi,\zeta)},(1+\beta x)^{\alpha}{(\varphi_{x},\psi_{x},\zeta_{x})})(t)\|_{L^{\infty}(\mathbb{R}^{+})}\leq
 C\mathcal {N}_{\lambda,\beta}(M),\label{4.3}\\[2mm]
&\dis \|(1+\beta x)^{\alpha}{(\varphi_{t},\psi_{t},\zeta_{t})}(t)\|_{L^{\infty}(\mathbb{R}^{+})}\leq
 C\mathcal {N}_{\lambda,\beta}(M).\label{4.4}
\end{eqnarray}
\end{lemma}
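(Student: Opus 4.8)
The plan is to reduce everything to the one-dimensional Sobolev embedding $\|g\|_{L^{\infty}(\mathbb{R}_{+})}^{2}\le C\|g\|_{L^{2}}\|g_{x}\|_{L^{2}}+C\|g\|_{L^{2}}^{2}$ applied to $g=(1+\beta x)^{\alpha}h$ with $h$ ranging over $\varphi,\psi,\zeta$ and their $x$-derivatives, and then to use the system \eqref{1.16} to trade time derivatives for spatial ones. First I would fix $\alpha\le\lambda/2$ and compute $\partial_{x}[(1+\beta x)^{\alpha}h]=\alpha\beta(1+\beta x)^{\alpha-1}h+(1+\beta x)^{\alpha}h_{x}$; since $1+\beta x\ge 1$ we have $(1+\beta x)^{\alpha-1}\le(1+\beta x)^{\alpha}$ (the exponent $\alpha-1$ being smaller than $\alpha$), and $|\alpha\beta|$ is bounded under the hypotheses, so $|\partial_{x}[(1+\beta x)^{\alpha}h]|\le C(1+\beta x)^{\alpha}(|h|+|h_{x}|)$. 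Hence $\|(1+\beta x)^{\alpha}h\|_{H^{1}}^{2}\le C\int_{\mathbb{R}_{+}}(1+\beta x)^{2\alpha}(h^{2}+h_{x}^{2})\,dx=C\|h\|_{2\alpha,\beta,1}^{2}$, and because $2\alpha\le\lambda$ together with $(1+\beta x)^{2\alpha}\le(1+\beta x)^{\lambda}$ this is $\le C\|h\|_{\lambda,\beta,1}^{2}$. Taking $h\in\{\varphi,\psi,\zeta\}$ bounds this by $C\|(\varphi,\psi,\zeta)\|_{\lambda,\beta,2}^{2}\le C\mathcal{N}_{\lambda,\beta}(M)^{2}$, and taking $h\in\{\varphi_{x},\psi_{x},\zeta_{x}\}$ gives $\|h\|_{\lambda,\beta,1}\le\|(\varphi,\psi,\zeta)\|_{\lambda,\beta,2}\le\mathcal{N}_{\lambda,\beta}(M)$; combining with the Sobolev inequality yields \eqref{4.3}.

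For \eqref{4.4} I would first solve \eqref{1.16} for the time derivatives: multiplying by the diagonal matrix $\mathrm{diag}(1,1/m,1)$ expresses $(\varphi_{t},\psi_{t},\zeta_{t})$ as a linear combination of $(\varphi_{x},\psi_{x},\zeta_{x})$, of $(\varphi,\psi,\zeta)$ (with coefficients involving $(\tilde v,\tilde u,\tilde T)_{x}$, which are bounded by Proposition \ref{prop1.1}), and of $\sigma_{x}$, where the variable coefficients $u=\psi+\tilde u$, $T=\zeta+\tilde T$ are bounded thanks to the smallness of the perturbation and the boundedness of the stationary profile. Therefore $\|(1+\beta x)^{\alpha}(\varphi_{t},\psi_{t},\zeta_{t})\|_{L^{\infty}}\le C\|(1+\beta x)^{\alpha}(\varphi,\psi,\zeta,\varphi_{x},\psi_{x},\zeta_{x})\|_{L^{\infty}}+C\|(1+\beta x)^{\alpha}\sigma_{x}\|_{L^{\infty}}$, and the first term is already controlled by \eqref{4.3}. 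For the last term I apply the same weighted Sobolev argument with $\sigma_{x}$ in place of $h$, getting $\|(1+\beta x)^{\alpha}\sigma_{x}\|_{L^{\infty}}^{2}\le C\|\sigma\|_{2\alpha,\beta,2}^{2}$, and then invoke the elliptic estimate of Lemma \ref{main.result4} (with weight parameter $2\alpha\le\lambda$, admissible since $2\alpha\beta$ is small) to bound $\|\sigma\|_{2\alpha,\beta,2}\le C\|\varphi\|_{2\alpha,\beta}\le C\|\varphi\|_{\lambda,\beta}\le C\mathcal{N}_{\lambda,\beta}(M)$. This proves \eqref{4.4}.

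The estimates are essentially routine once the weight is handled, and the only delicate points are: (i) checking that the term produced by differentiating the weight, $\alpha\beta(1+\beta x)^{\alpha-1}h$, is dominated by $(1+\beta x)^{\alpha}|h|$ uniformly, which relies on $\beta$ being bounded and on $1+\beta x\ge 1$; and (ii) verifying that the hypotheses of Proposition \ref{ste.pro1.1} and Proposition \ref{ste.pro2}(ii) indeed guarantee the smallness conditions $|\alpha\beta|\le c_{0}$ and $|\phi_{b}|+\mathcal{N}_{\lambda,\beta}(M)\le\delta$ needed to invoke Lemma \ref{main.result4}. The main obstacle, such as it is, is the bookkeeping in solving \eqref{1.16} for $(\varphi_{t},\psi_{t},\zeta_{t})$ and confirming that every coefficient there is bounded in $L^{\infty}$ uniformly in $M$; this follows from Proposition \ref{prop1.1} and the a priori smallness of $\mathcal{N}_{\lambda,\beta}(M)$, but it is the step where one must be explicit. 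For the analogous exponential-weight version one proceeds identically, replacing $(1+\beta x)^{\alpha}$ by $e^{\beta x/2}$ and noting $\partial_{x}e^{\beta x/2}=(\beta/2)e^{\beta x/2}$, so that no loss in the weight occurs at all.
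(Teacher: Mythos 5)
Your argument is correct and follows the same route as the paper: both estimates come from the one-dimensional Sobolev embedding applied to the weighted functions, with the observation that differentiating the algebraic weight $(1+\beta x)^{\alpha}$ only lowers the exponent (hence costs nothing since $1+\beta x\geq1$), the condition $2\alpha\leq\lambda$ letting you absorb into $\mathcal{N}_{\lambda,\beta}(M)$, and \eqref{4.4} obtained by solving \eqref{1.16} for the time derivatives and invoking the elliptic estimate of Lemma \ref{main.result4}(i) for the $\sigma_{x}$ contribution. The paper's own proof is a terse version of exactly this; you merely spell out the weight-differentiation bookkeeping that the paper leaves implicit.
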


\begin{proof}
The first estimate \eqref{4.3} directly follows from the Sobolev inequality that
\begin{eqnarray*}
&&\|((1+\beta x)^{\alpha}{(\varphi,\psi,\zeta)},(1+\beta
x)^{\alpha}{(\varphi_{x},\psi_{x},\zeta_{x})})(t)\|_{L^{\infty}(\mathbb{R}^{+})}\\[2mm]
&&\leq  C\|((1+\beta x)^{\alpha}{(\varphi,\psi,\zeta)},(1+\beta
x)^{\alpha}{(\varphi_{x},\psi_{x},\zeta_{x})})(t)\|_{H^{1}(\mathbb{R}^{+})}\\[2mm]
&&\leq  C\|((1+\beta x)^{\alpha}{(\varphi,\psi,\zeta)}(t)\|_{H^{2}(\mathbb{R}^{+})}.
\end{eqnarray*}
The second estimate $\eqref{4.4}$ immediately follows from $\eqref{4.3}$
owing to $\eqref{1.16}$, $\eqref{1.17}$, Lemma \ref{1.1} and Lemma
\ref{main.result4} (i).
\end{proof}

\begin{lemma}\label{main.result4.3}
For the nondegenerate case, we assume the same conditions as in
Proposition \ref{ste.pro2} (ii) and let $\delta$ be suitably small. Then it holds
for $\xi=\varepsilon-1$ or $\varepsilon$ that
\begin{eqnarray}\label{4.5}
 \begin{aligned}[b]
 \|\partial_{t}^{i}{(\varphi,\psi,\zeta)}\|_{\xi,\beta,j}\leq
 C\|{(\varphi,\psi,\zeta)}\|_{\xi,\beta,i+j},
\end{aligned}
\end{eqnarray}
where $(i,j)\in\{(i,j)\in\mathbb{Z}^{2}|i,j\geq 0, i+j\leq 2\}$. For the
degenerate case, we assume the same conditions as in Proposition \ref{ste.pro1.1}
and let $\delta$ be suitably small. Then it holds for
$\xi=\varepsilon-3$, $\varepsilon-1$ or $\varepsilon$ that
\begin{eqnarray}
&\dis \|{(\varphi_{t},\psi_{t},\zeta_{t})}\|_{\xi,\beta} \leq
C\|({\varphi_{x},\psi_{x},\zeta_{x}},\sigma_{x})\|_{\xi,\beta}+C\beta\|{(\varphi,\psi,\zeta)}\|_{\xi-2,\beta}
 \leq C\|{(\varphi,\psi,\zeta)}\|_{\xi,\beta,1},\label{4.6}\\[2mm]
&\dis \|({\varphi_{tx},\psi_{tx},\zeta_{tx}},{\varphi_{tt},\psi_{tt},\zeta_{tt}})\|_{\xi,\beta}
 \leq C\|{(\varphi,\psi,\zeta)}\|_{\xi,\beta,2}.\label{4.7}
\end{eqnarray}
\end{lemma}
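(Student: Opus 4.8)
The plan is to read every time derivative of $(\varphi,\psi,\zeta)$ off the hyperbolic part \eqref{1.16} of the system, trading it for space derivatives, and to dispose of the resulting $\sigma$-terms through the elliptic estimate of Lemma \ref{main.result4}, the stationary-solution coefficients through Proposition \ref{prop1.1}, and all quadratic perturbation terms through the smallness of $\delta$ together with Lemma \ref{main.result4.2}. Since the estimates in \eqref{4.5} are trivial when $i=0$, it suffices to treat two levels: one derivative (at least one in time), and two derivatives (at least one in time).

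\emph{First level: estimate \eqref{4.6} and the case $(i,j)=(1,0)$ of \eqref{4.5}.} Inverting $\mathrm{diag}(1,m,1)$ in \eqref{1.16} gives
\[
(\varphi,\psi,\zeta)_t=-A\,(\varphi,\psi,\zeta)_x+\tfrac{1}{m}\,(0,\sigma_x,0)^{\top}-B\,(\tilde v,\tilde u,\tilde T)_x ,
\]
where the entries of $A$ are built from $u=\psi+\tilde u$, $T=\zeta+\tilde T$ and constants, and those of $B$ from $\psi,\zeta$ and constants. Under the smallness hypotheses the entries of $A$ and $B$ are uniformly bounded, so $\|A\,(\varphi,\psi,\zeta)_x\|_{\xi,\beta}\le C\|(\varphi_x,\psi_x,\zeta_x)\|_{\xi,\beta}$, the middle term is $\le C\|\sigma_x\|_{\xi,\beta}\le C\|\varphi\|_{\xi,\beta}$ by Lemma \ref{main.result4} (applicable since $\xi\le\lambda$ and $|\xi\beta|\le c_0$ for $\delta$ small), and the last term is $\le C\|(\psi,\zeta)(|\tilde v_x|+|\tilde u_x|+|\tilde T_x|)\|_{\xi,\beta}$. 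In the nondegenerate case $|\tilde v_x|+|\tilde u_x|+|\tilde T_x|\le C\phi_b e^{-cx}\le C\phi_b$ by \eqref{1.14}, so this is $\le C\phi_b\|(\psi,\zeta)\|_{\xi,\beta}$; in the degenerate case Proposition \ref{prop1.1} gives $|\tilde v_x|+|\tilde u_x|+|\tilde T_x|\le CG(x)^{-3}$, and since \eqref{b} yields $\phi_b^{-1/2}\ge\theta\Gamma\beta^{-1}$ and hence $G(x)\ge\theta\Gamma\beta^{-1}(1+\beta x)$, one gets $G(x)^{-3}\le C\beta^3(1+\beta x)^{-3}$, so the term is $\le C\beta^3\|(\psi,\zeta)\|_{\xi-6,\beta}\le C\beta\|(\varphi,\psi,\zeta)\|_{\xi-2,\beta}$. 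This is the first inequality in \eqref{4.6}; the second follows from $\|(\varphi_x,\psi_x,\zeta_x)\|_{\xi,\beta}\le\|(\varphi,\psi,\zeta)\|_{\xi,\beta,1}$, $\|\sigma_x\|_{\xi,\beta}\le C\|\varphi\|_{\xi,\beta}$, and $\beta\|(\varphi,\psi,\zeta)\|_{\xi-2,\beta}\le\beta\|(\varphi,\psi,\zeta)\|_{\xi,\beta}\le\|(\varphi,\psi,\zeta)\|_{\xi,\beta}$ using $\beta\le\delta\le1$. The case $(1,0)$ of \eqref{4.5} is identical, the algebraic weight being replaced by the exponential one and the exponential decay of the coefficients used in the same place.

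\emph{Second level: estimate \eqref{4.7} and the cases $(1,1),(2,0)$ of \eqref{4.5}.} Differentiating the displayed identity once more — in $x$, or in $t$ after substituting $u_t=\psi_t$, $T_t=\zeta_t$ so that the extra time derivative becomes a space derivative of a first-order quantity — produces: (a) $A$ times $(\varphi,\psi,\zeta)_{xx}$ or $(\varphi,\psi,\zeta)_{tx}$, the latter reinserted from the first-level bound; (b) derivatives of the entries of $A$ (of the form $\psi_x,\zeta_x,\psi_t,\zeta_t$, bounded in $L^\infty$ by $C\mathcal N_{\lambda,\beta}(M)$ via Lemma \ref{main.result4.2}, or derivatives of the stationary solution, which are uniformly bounded, e.g. $|\tilde v_{xx}|+|\tilde u_{xx}|+|\tilde T_{xx}|\le CG(x)^{-4}\le C\phi_b^2$ in the degenerate case) times first-order perturbations, altogether contributing $\le C\delta\|(\varphi,\psi,\zeta)\|_{\xi,\beta,2}$; and (c) $\sigma_{xx}$ and $\sigma_{tx}$, which by \eqref{2.3} and Lemma \ref{main.result4} satisfy $\|\sigma_{xx}\|_{\xi,\beta}\le C\|\varphi\|_{\xi,\beta}$ and $\|\sigma_{tx}\|_{\xi,\beta}\le C\|\varphi\|_{\xi,\beta,1}$. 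Collecting these bounds and iterating once to eliminate the $(\varphi,\psi,\zeta)_{tx}$ appearing on the right yields \eqref{4.7}; the cases $(1,1)$ and $(2,0)$ of \eqref{4.5} go through verbatim.

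The only point needing real care is the bookkeeping of the algebraic weight $(1+\beta x)^{\xi}$ against the algebraic decay of $\tilde v_x,\tilde u_x,\tilde T_x$ in the degenerate case: one must use the precise two-sided relation $\beta\asymp\Gamma\phi_b^{1/2}$ from \eqref{b} to convert powers of $G(x)=\Gamma x+\phi_b^{-1/2}$ into powers of $\beta$ and of $(1+\beta x)$, so that the coefficient contribution lands in exactly the norm $\beta\|(\varphi,\psi,\zeta)\|_{\xi-2,\beta}$ claimed in \eqref{4.6} rather than in a weaker one. Everything else is routine: all $\sigma$-contributions are traded for $\varphi$ through Lemma \ref{main.result4}, all quadratic perturbation terms are absorbed by $\delta$ through Lemma \ref{main.result4.2}, and the closing chains of inequalities use only $(1+\beta x)^{-1}\le1$, $\beta\le\delta\le1$, and the equivalence of $\|\cdot\|_{\alpha,\beta,i}$ with $\|(1+\beta x)^{\alpha/2}\cdot\|_{H^{i}}$ recorded after \eqref{def.W}.
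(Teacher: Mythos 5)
Your proof is correct and is exactly the detail that the paper's one-line proof (``derived by the governing system \eqref{1.16} and \eqref{1.17} as well as the time-derivative system with the help of Lemma \ref{main.result4}(i)'') leaves implicit: you solve \eqref{1.16} for the time derivatives, trade $\sigma$-terms for $\varphi$ via the elliptic estimate, and use the decay of the stationary coefficients together with the smallness of $\delta$ and Lemma \ref{main.result4.2} to absorb everything into the claimed norms, with the crucial degenerate-case conversion $G(x)^{-3}\le C\beta^3(1+\beta x)^{-3}$ via \eqref{b} handled correctly. One small misstatement, not affecting the argument: \eqref{4.5} is for the nondegenerate case \emph{with the algebraic weight} $(1+\beta x)^{\xi/2}$ (Proposition \ref{ste.pro2}(ii)), so your phrase ``the algebraic weight being replaced by the exponential one'' should instead say that the algebraic decay of $\tilde v_x,\tilde u_x,\tilde T_x$ is replaced by the exponential decay \eqref{1.14}, which is what makes the extra $\beta\|\cdot\|_{\xi-2,\beta}$ term of \eqref{4.6} unnecessary there.
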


\begin{proof}
Those estimates \eqref{4.5}, \eqref{4.6} and \eqref{4.7} can be derived by the governing system
 $\eqref{1.16}$ and $\eqref{1.17}$ as well as the time-derivative system with the help of Lemma
\ref{main.result4} (i).
\end{proof}

\begin{lemma}\label{main.result4.4}

For the nondegenerate case, we assume the same conditions as in
Proposition \ref{ste.pro2} (ii) and let $\delta$ be suitably small. Then it holds
for $\xi=\varepsilon-1$ or $\varepsilon$ that
\begin{eqnarray*}
&\dis \|{(\varphi_{x},\psi_{x},\zeta_{x})}\|_{\xi,\beta}\leq
 C\|({\varphi_{t},\psi_{t},\zeta_{t}},{\varphi,\psi,\zeta})\|_{\xi,\beta},\\[2mm]
&\dis \|{(\varphi_{xx},\psi_{xx},\zeta_{xx})}\|_{\xi,\beta}\leq
 C\|({\varphi_{tt},\psi_{tt},\zeta_{tt}},{\varphi_{t},\psi_{t},\zeta_{t}},{\varphi,\psi,\zeta})\|_{\xi,\beta}.
\end{eqnarray*}
For the degenerate case, we assume the same conditions as in
Proposition \ref{ste.pro1.1} and let $\delta$ be suitably small. Then it holds for
$\xi=\varepsilon-3$, $\varepsilon-1$, or $\varepsilon$ that
\begin{eqnarray*}
\|{(\varphi_{x},\psi_{x},\zeta_{x})}\|_{\xi,\beta}\leq
C\|({\varphi_{t},\psi_{t},\zeta_{t}},\sigma_{x})\|_{\xi,\beta}+C\beta\|({\varphi,\psi,\zeta})\|_{\xi-2,\beta}
 \leq C\|({\varphi_{t},\psi_{t},\zeta_{t}},{\varphi,\psi,\zeta})\|_{\xi,\beta},
\end{eqnarray*}
and
\begin{equation*}
\begin{aligned}
\|{(\varphi_{xx},\psi_{xx},\zeta_{xx})}\|_{\xi,\beta}&\leq
 C\|({\varphi_{tt},\psi_{tt},\zeta_{tt}},{\varphi_{x},\psi_{x},\zeta_{x}},{\varphi,\psi,\zeta})\|_{\xi,\beta}
 \\[2mm]
 &\leq C\|({\varphi_{tt},\psi_{tt},\zeta_{tt}},{\varphi_{t},\psi_{t},\zeta_{t}},{\varphi,\psi,\zeta})\|_{\xi,\beta}.
\end{aligned}
\end{equation*}
\end{lemma}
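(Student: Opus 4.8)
The plan is to exploit the fact that the hyperbolic part \eqref{1.16} can be solved algebraically for the spatial derivative of $(\varphi,\psi,\zeta)$. Write \eqref{1.16} (equivalently \eqref{2.0}) in the compact form
\begin{equation*}
A_{0}(u,T)\,(\varphi,\psi,\zeta)_{t}+A_{1}(u,T)\,(\varphi,\psi,\zeta)_{x}=(0,\sigma_{x},0)^{\top}+L,
\end{equation*}
where $L$ gathers the terms that are products of $(\varphi,\psi,\zeta)$ with the first $x$-derivatives of the stationary quantities $(\tilde v,\tilde u,\tilde T)$, and the coefficient matrix $A_{1}=A_{1}(u,T)$ of $(\varphi,\psi,\zeta)_{x}$ has eigenvalues $\lambda_{1},\lambda_{2},\lambda_{3}$, all strictly negative by the Bohm criterion \eqref{1.15}. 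Once $\delta$ and $|\phi_{b}|$ are small, so that $(u,T)=(\tilde u+\psi,\tilde T+\zeta)$ stays uniformly close to $(u_{\infty},T_{\infty})$ by Lemma \ref{1.1} and Lemma \ref{main.result4.2}, the matrix $A_{1}$ is uniformly invertible with $\|A_{1}^{-1}\|_{L^{\infty}}\le C$, so that
\begin{equation*}
(\varphi,\psi,\zeta)_{x}=-A_{1}^{-1}A_{0}\,(\varphi,\psi,\zeta)_{t}+A_{1}^{-1}(0,\sigma_{x},0)^{\top}+A_{1}^{-1}L.
\end{equation*}

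First I would derive the first-order estimates by multiplying this identity by $(1+\beta x)^{\xi/2}$ and taking the $L^{2}(\mathbb{R}_{+})$ norm. The contribution of $A_{1}^{-1}A_{0}(\varphi,\psi,\zeta)_{t}$ is bounded by $C\|(\varphi,\psi,\zeta)_{t}\|_{\xi,\beta}$. The contribution of $A_{1}^{-1}(0,\sigma_{x},0)^{\top}$ is bounded by $C\|\sigma_{x}\|_{\xi,\beta}$, which by the elliptic estimate of Lemma \ref{main.result4} (used with $i=0$, $j=2$, after noting that $|\xi\beta|$ stays below the admissible threshold because $\beta$ is small) does not exceed $C\|\varphi\|_{\xi,\beta}$. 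The contribution of $A_{1}^{-1}L$ is handled via the decay of the stationary solution: in the nondegenerate case $|L|\le C|\phi_{b}|e^{-cx}|(\psi,\zeta)|$ by \eqref{1.14}, whereas in the degenerate case $|L|\le C\,G(x)^{-3}|(\psi,\zeta)|\le C\beta\,(1+\beta x)^{-2}|(\psi,\zeta)|$ by Proposition \ref{prop1.1} together with $G(x)^{-1}\le C\beta(1+\beta x)^{-1}$; hence $\|A_{1}^{-1}L\|_{\xi,\beta}$ is $\le C\beta\|(\varphi,\psi,\zeta)\|_{\xi-2,\beta}$ in the degenerate case and $\le C|\phi_{b}|\|(\varphi,\psi,\zeta)\|_{\xi,\beta}$ in the nondegenerate case. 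This yields the intermediate inequality stated for the degenerate case (with explicit $\sigma_{x}$ and $C\beta\|(\varphi,\psi,\zeta)\|_{\xi-2,\beta}$ terms), and, after invoking $1+\beta x\ge1$, $\beta\le1$ and $\|\sigma_{x}\|_{\xi,\beta}\le C\|\varphi\|_{\xi,\beta}$, the final bound $\|(\varphi,\psi,\zeta)_{x}\|_{\xi,\beta}\le C\|((\varphi,\psi,\zeta)_{t},(\varphi,\psi,\zeta))\|_{\xi,\beta}$ in both cases.

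For the second-order estimates I would differentiate \eqref{1.16} once in $x$ and, using $A_{1}^{-1}$ again, solve the differentiated system for $(\varphi,\psi,\zeta)_{xx}$ in terms of $(\varphi,\psi,\zeta)_{tx}$, $\sigma_{xx}$, $(\varphi,\psi,\zeta)_{x}$, $(\varphi,\psi,\zeta)$ and quadratic remainders of size $O(\delta)$. The mixed derivative $(\varphi,\psi,\zeta)_{tx}$ is then removed by differentiating \eqref{1.16} in $t$ and solving for it; bounding $\sigma_{tx}$ by $C\|\varphi_{t}\|_{\xi,\beta}$ via Lemma \ref{main.result4} applied to the $t$-differentiated form of \eqref{1.17} gives $\|(\varphi,\psi,\zeta)_{tx}\|_{\xi,\beta}\le C\|((\varphi,\psi,\zeta)_{tt},(\varphi,\psi,\zeta)_{t},(\varphi,\psi,\zeta))\|_{\xi,\beta}$. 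Similarly $\|\sigma_{xx}\|_{\xi,\beta}\le C\|\varphi\|_{\xi,\beta}$, and $(\varphi,\psi,\zeta)_{x}$ is controlled by the first-order estimate just obtained. Substituting all of this back, absorbing the $O(\delta)$ quadratic part and merging the lower-weight/extra-$\beta$ terms as before, produces the stated second-order bounds in both the degenerate and nondegenerate cases.

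The only point requiring care is the bookkeeping of the space-weight exponents and the powers of $\beta$: one must verify that each $x$-derivative of the stationary solution carries exactly the algebraic decay $(1+\beta x)^{-1}$ (degenerate case) needed to shift the weight exponent from $\xi$ down to $\xi-2$ while supplying the single power of $\beta$, and that the elliptic estimate of Lemma \ref{main.result4} is applicable with $\xi\in\{\varepsilon-3,\varepsilon-1,\varepsilon\}$ under $|\xi\beta|$ small, which holds by the hypotheses of Proposition \ref{ste.pro1.1}, respectively Proposition \ref{ste.pro2}. The main --- and quite mild --- obstacle is ensuring that $A_{1}=A_{1}(\tilde u+\psi,\tilde T+\zeta)$ remains uniformly nondegenerate; this is exactly where the strict negativity of the far-field characteristic speeds under \eqref{1.15}, together with the smallness of $\delta$ and $|\phi_{b}|$, is used.
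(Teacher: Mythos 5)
Your proposal is correct and follows essentially the same route as the paper: both invert the algebraic relation \eqref{4.7a}/\eqref{4.8} obtained from \eqref{1.16} by solving for the spatial derivatives, using the Bohm criterion (strictly negative characteristic speeds, hence nonvanishing determinant of the $x$-coefficient matrix $A_2$) plus smallness of $\delta$ and $|\phi_b|$ to ensure uniform invertibility, and then closing with the elliptic estimate of Lemma \ref{main.result4} and the decay of $(\tilde v,\tilde u,\tilde T)_x$ from Proposition \ref{prop1.1}. You supply a bit more explicit bookkeeping on the $G(x)^{-3}$ decay that produces the $\beta\,\|\cdot\|_{\xi-2,\beta}$ term in the degenerate case, but the underlying argument is the one the paper sketches.
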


\begin{proof} By $\eqref{1.16}$, we have
  \begin{eqnarray}\label{4.7a}
\begin{aligned} & A_{1} \left(\begin{array} {c}
\varphi \\
\psi\\
\zeta\\
\end{array} \right)_{t}+A_{2}
\left(\begin{array} {c}
\varphi \\
\psi\\
\zeta\\
\end{array} \right)_{x}
=-A_{3}\left(\begin{array} {c}
\tilde{v} \\
\tilde{u}\\
\tilde{T}\\
\end{array} \right)_{x}+\left(\begin{array} {c}
0 \\
\sigma_{x}\\
0\\
\end{array} \right),
\end{aligned}
\end{eqnarray}
where  $A_{1}$, $A_{2}$ and $A_{3}$ are corresponding matrices appearing in  $\eqref{1.16}$. Recall
$$
u=\psi+(\tilde{u}-u_{\infty})+u_{\infty}, \quad
T=\zeta+(\tilde{T}-T_{\infty})+T_{\infty},\ \text{ and }
u^{2}_{\infty}\geq\frac{\gamma RT_{\infty}+1}{m}.
$$
One then sees that $A_{2}$ is
regular and each entry of $A_{2}^{-1}$ is bounded under the condition that $\mathcal
{N}_{\lambda,\beta}(M)+|\phi_{b}|\ll 1.$ Hence, whenever it holds that $\mathcal
{N}_{\lambda,\beta}(M)+|\phi_{b}|\ll 1$,  one can solve $\eqref{4.7a}$
for $(\varphi_{x},\psi_{x},\zeta_{x})^{T}$ so as to obtain
\begin{eqnarray}\label{4.8}
\begin{aligned} &  \left(\begin{array} {c}
\varphi \\
\psi\\
\zeta\\
\end{array} \right)_{x}
 =-M_{2}^{-1}M_{1}\left(\begin{array} {c}
\varphi \\
\psi\\
\zeta\\
\end{array} \right)_{t}
-M_{2}^{-1}M_{3}\left(\begin{array} {c}
\tilde{v} \\
\tilde{u}\\
\tilde{T}\\
\end{array} \right)_{x}+M_{2}^{-1}\left(\begin{array} {c}
0 \\
\sigma_{x}\\
0\\
\end{array} \right).
\end{aligned}
\end{eqnarray}
Under the same conditions as in Proposition \ref{ste.pro2} (ii), it follows from $\eqref{4.8}$ that
\begin{eqnarray*}
\|{(\varphi_{x},\psi_{x},\zeta_{x})}\|_{\xi,\beta}\leq
C\|({\varphi_{t},\psi_{t},\zeta_{t}},{\varphi,\psi,\zeta},\sigma_{x})\|_{\xi,\beta} \leq
C\|({\varphi_{t},\psi_{t},\zeta_{t}},{\varphi,\psi,\zeta})\|_{\xi,\beta}.
\end{eqnarray*}
Similarly, under the same conditions as in Proposition \ref{ste.pro1.1}, \eqref{4.8} also gives that
\begin{equation*}
\begin{aligned}
\|{(\varphi_{x},\psi_{x},\zeta_{x})}\|_{\xi,\beta} &\leq
C\|({\varphi_{t},\psi_{t},\zeta_{t}},\sigma_{x})\|_{\xi,\beta}+C\beta\|({\varphi,\psi,\zeta})\|_{\xi-2,\beta}
  \\[2mm]
   &\leq C\|({\varphi_{t},\psi_{t},\zeta_{t}},{\varphi,\psi,\zeta})\|_{\xi,\beta}.
\end{aligned}
\end{equation*}
The other desired estimates for the higher order derivatives can be shown in the same way as above. This ends the proof of Lemma \ref{main.result4.4}.
\end{proof}


\noindent{\bf Acknowledgements:} Renjun Duan was supported by the General Research Fund (Project No. 14301719) from RGC of Hong Kong. Haiyan Yin was supported by the National Natural Science Foundation of China (Grant Nos. 12071163
and 11601165) and the Promotion Program
for Young and Middle-aged Teacher in Science and Technology Research of Huaqiao
University (Grant No. ZQN-PY602).
Changjiang Zhu was supported by the National Natural Science Foundation of China (Grant Nos. 11771150, 11831003 and 11926346)
and Guangdong Basic and Applied Basic Research Foundation (Grant No. 2020B1515310015).


\end{document}